\documentclass[reqno]{amsart}

\headheight=8pt
\topmargin=0pt
\textheight=624pt
\textwidth=432pt
\oddsidemargin=18pt
\evensidemargin=18pt

\usepackage{amssymb}
\usepackage{faktor}
\usepackage{bbm}
\usepackage[all,cmtip]{xy}
\usepackage{amsmath,amscd}
\usepackage{mathrsfs}
\usepackage{color}
\usepackage{mathtools}
\usepackage[mathcal]{euscript}
\usepackage{hyperref}
\hypersetup{colorlinks=true,linkcolor=blue}

\pagestyle{plain}

% See the ``Article customise'' template for come common customisations
\newcommand{\Hom}{\mathrm{Hom}}

\newcommand{\Ran}{\mathrm{Ran}}
\newcommand{\Conf}{\mathrm{Conf}}
\newcommand{\Diag}{\mathrm{Diag}}

\newcommand{\DDisk}{\mathcal{D}\mathrm{isk}}
\newcommand{\Sym}{\mathrm{Sym}}

\newcommand{\op}{{op}}

\newcommand{\Shv}{\mathrm{Shv}}

\newcommand{\Emb}{\mathrm{Emb}}
\newcommand{\MMfld}{\mathcal{M}\mathrm{fld}}
\newcommand{\EEuc}{\mathcal{E}\mathrm{uc}}
\newcommand{\Mfld}{\mathrm{Mfld}}

\newcommand{\Top}{\mathrm{Top}}

\newcommand{\Alg}{\mathrm{Alg}}

\newcommand{\Mod}{\mathrm{Mod}}

\newcommand{\Coalg}{\mathrm{Coalg}}

\newcommand{\Lie}{\mathcal{L}}
\newcommand{\Fr}{\mathrm{Fr}}

\newcommand{\Ind}{\mathrm{Ind}}
\newcommand{\Fun}{\mathrm{Fun}}
\newcommand{\Fact}{\mathrm{Fact}}
\newcommand{\Euc}{\mathrm{Euc}}

\newcommand{\Surj}{\mathrm{Surj}}
\newcommand{\ch}{\mathrm{ch}}

\newcommand{\gr}{\mathrm{gr}}
\newcommand{\loc}{\mathrm{loc}}

\newcommand{\id}{\mathrm{id}}
\newcommand{\cbl}{\mathrm{cbl}}

\newcommand{\cShv}{\mathrm{cShv}}

\newcommand{\D}{\mathcal{D}}
\newcommand{\W}{\mathcal{W}}

\newcommand{\C}{\mathcal{C}}

\newcommand{\Map}{\mathrm{Map}}
\newcommand{\V}{\mathcal{V}}
\DeclareMathOperator*{\im}{\mathrm{im}}

\newcommand{\Disk}{\mathrm{Disk}}
\newcommand{\Cov}{\mathrm{Cov}}
\newcommand{\Disj}{\mathrm{Disj}}
\newcommand{\pt}{\mathrm{pt}}
\newcommand{\lax}{\mathrm{lax}}
\newcommand{\oplax}{\mathrm{oplax}}

\DeclareMathOperator*{\colim}{\mathrm{colim}}
\DeclareMathOperator*{\Fin}{\mathrm{Fin}}

\newcommand{\Com}{\mathrm{Com}}

\theoremstyle{definition}\newtheorem{definition}{Definition}[subsection]
\theoremstyle{remark}\newtheorem{remark}[definition]{Remark}
\theoremstyle{definition}
\theoremstyle{definition}
\theoremstyle{definition}\newtheorem{example}[definition]{Example}
\theoremstyle{theorem}\newtheorem{proposition}[definition]{Proposition}
\theoremstyle{theorem}\newtheorem{lemma}[definition]{Lemma}
\theoremstyle{definition}
\theoremstyle{theorem}\newtheorem{corollary}[definition]{Corollary}
\theoremstyle{theorem}\newtheorem{theorem}[definition]{Theorem}
\theoremstyle{definition}
\theoremstyle{remark}
\theoremstyle{theorem}\newtheorem{bigtheorem}{Theorem}
\theoremstyle{theorem}\newtheorem{introtheorem}{Theorem}

\makeatletter
\@addtoreset{definition}{section}
\makeatother

%\begin{filecontents}{biblio.bib}
%@article{Francis,
%AUTHOR = {Francis, John},
%TITLE = {Ergodic theory of differentiable dynamical systems},
%YEAR = {1979}}
%\end{filecontents}

\title{Higher enveloping algebras}
\author{Ben Knudsen}
\date{} % delete this line to display the current date

%%% BEGIN DOCUMENT
\begin{document}

\begin{abstract}
We provide spectral Lie algebras with enveloping algebras over the operad of little $G$-framed $n$-dimensional disks for any choice of dimension $n$ and structure group $G$, and we describe these objects in two complementary ways. The first description is an abstract characterization by a universal mapping property, which witnesses the higher enveloping algebra as the value of a left adjoint in an adjunction. The second, a generalization of the Poincar\'{e}-Birkhoff-Witt theorem, provides a concrete formula in terms of Lie algebra homology. Our construction pairs the theories of Koszul duality and Day convolution in order to lift to the world of higher algebra the fundamental combinatorics of Beilinson-Drinfeld's theory of chiral algebras. Like that theory, ours is intimately linked to the geometry of configuration spaces and has the study of these spaces among its applications. We use it here to show that the stable homotopy types of configuration spaces are proper homotopy invariants.
\end{abstract}

\maketitle

\section{Introduction} As the structure inherited by the tangent space to the identity element of a Lie group, Lie algebras are classically tied to the smooth geometry of manifolds. In this work, we explore a more primitive source for this same type of algebraic structure. Our guiding philosophy is that the Lie bracket is an emergent feature of the \emph{topology} of manifolds. 

Our eventual goal is to formulate and prove a statement about the relationship between Lie algebras and manifolds in terms of algebras over the operad of little $n$-dimensional disks. Before doing so, we undertake a brief tour of some of the manifestations of this relationship, beginning with its first appearance in the study of configuration spaces.

\subsection{Configuration spaces} In his study of the braid groups \cite{Arnold:CRCBG}, Arnold was led to consider the cohomology of $\Conf_k(\mathbb{R}^2)$, where for a manifold $M$ we write \[\Conf_k(M)=\{(x_1,\ldots, x_k)\in M^k: x_i\neq x_j\text{ if } i\neq j\}\] for the configuration space of $k$ ordered points in $M$. His approach, later adapted to Euclidean spaces of higher dimension by Cohen \cite[III]{CohenLadaMay:HILS}, was to exploit a natural family of cohomology classes $\{\omega_{ij}\}_{i\neq j}$, where $\omega_{ij}$ is dual to the submanifold $M_{ij}\subseteq\Conf_k(\mathbb{R}^n)$ defined by allowing the points $x_i$ and $x_j$ to orbit freely at fixed distance about their center of mass (see \cite{Sinha:HLDO} for a beautiful discussion of this point of view). More precisely, $\omega_{ij}$ is the class pulled back under the Gauss map \begin{align*}\Conf_k(\mathbb{R}^n)&\to S^{n-1}\\
(x_1,\ldots, x_k)&\mapsto\displaystyle\frac{x_i-x_j}{\|x_i-x_j\|}\end{align*} from the standard volume form on $S^{n-1}$. The cohomology ring of $\Conf_k(\mathbb{R}^n)$ is generated by these classes subject only to the following two relations, which we have named suggestively: \begin{align*}\omega_{ij}=(-1)^{n-1}\omega_{ji}&\qquad(\text{antisymmetry})\\
\omega_{ij}\omega_{jk}+\omega_{ki}\omega_{ij}+\omega_{jk}\omega_{ki}=0&\qquad (\text{Jacobi})\end{align*}To explain in what sense these names are deserved, we turn to the theory of iterated loop spaces.

\subsection{Loop spaces} Let $(X,x_0)$ be a pointed space and $\Omega^nX:=\Map\big((I^n,\partial I^n), (X,x_0)\big)$ the associated $n$-fold loop space, where $I=[-1,1]$. Given rectilinear self-embeddings $\{f_1,\ldots, f_k\}$ of $I^n$ with disjoint images, we obtain a $k$-to-one operation $m_{f_1,\ldots, f_k}$ on $\Omega^nX$ by setting $$m_{f_1,\ldots, f_k}\big(\varphi_1,\ldots, \varphi_k\big)(t)=\begin{cases}
\varphi_i(f_i^{-1}(t))\qquad& t\in\im f_i\\
x_0\qquad & \text{else.}
\end{cases}$$ As the embeddings vary, we obtain a map $$\xymatrix{m_k:E_n(k)\times \big(\Omega^nX\big)^k\ar[r]&\Omega^n X,}$$ where $E_n(k)$ is the space of rectilinear embeddings of $\amalg_k I^n$ into $I^n$. The compatibility relations among the various $m_k$ are summarized by saying that the collection $E_n=\{E_n(k)\}_{k\geq0}$ forms an \emph{operad} and that the maps $m_k$ endow $\Omega^nX$ with the structure of an $E_n$-\emph{algebra}---see \cite{BoardmanVogt:HIASTS} and \cite{May:GILS} for early articulations of these ideas.

The connection to Lie algebras lies in the observation that the map $
E_n(k)\to\Conf_k(\mathbb{R}^n)$ given by evaluation at the origin is a homotopy equivalence for each $k$, so that, at the level of homology, there is an induced operation $$\xymatrix{[M_{12}]\otimes H_*(\Omega^nX)^{\otimes 2}\ar[r]&H_*(\Omega^nX),}$$ called the \emph{Browder bracket} (see \cite{Browder:HOLS}). Equipped with this operation, the shifted homology $H_{*+n-1}(\Omega^nX)$ obtains the structure of a graded Lie algebra, for which antisymmetry and the Jacobi identity are guaranteed by the corresponding relations in the cohomology of configuration spaces.

\subsection{Enveloping algebras} More homotopically, one might expect that the $n-1$-fold suspension of a differential graded $E_n$-algebra---such as the singular chain complex $C_*(\Omega^nX)$---should naturally carry the structure of a Lie algebra, perhaps up to homotopy. 

Since $E_1$ is equivalent to the operad governing associative algebras, the statement for $n=1$ is nothing more than the familiar fact that the commutator in an associative algebra is a Lie bracket. Classically, this observation is the beginning of a fruitful interplay between these two types of algebra, the avatar of which is the \emph{universal enveloping algebra}, the left adjoint to the forgetful functor taking an associative algebra to its commutator Lie algebra.

Our first result generalizes this situation to higher dimensions and nonzero characteristics. We will actually prove a more general version of this result, stated below in \S\ref{sec:structured manifolds}, which takes into account the action of a structure group $G\to O(n).$

\begin{introtheorem}[$G=\{e\}$]\label{thm:baby adjunction}
Let $\C$ be a stable, presentably symmetric monoidal $\infty$-category. There is an adjunction of $\infty$-categories between nonunital $\mathbb{E}_n$-algebras and Lie algebras in $\C$ fitting into a commuting diagram of adjunctions \[\xymatrix{
\Alg_{\Lie}(\C)\ar@{-->}@/^1.2pc/[rr]^{U_n}\ar[dd]&&\Alg^\mathrm{nu}_{\mathbb{E}_n}(\C)\ar[dd]\ar@{-->}[ll]\\\\
\C\ar@/_1.2pc/[rr]_{[1-n]}\ar@/^1.2pc/[uu]^-{\Lie}&&\C\ar@/_1.2pc/[uu]_-{\mathbb{E}^\mathrm{nu}_n}\ar[ll]_-{
[n-1]}
}\] 
\end{introtheorem}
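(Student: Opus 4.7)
The plan is to exhibit the right adjoint of $U_n$ explicitly as a restriction-of-operads functor and then extract $U_n$ by the adjoint functor theorem.

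The core input is a map of symmetric $\infty$-operads $\varphi\colon \Lie[n-1]\to \mathbb{E}_n^{\mathrm{nu}}$, where $\Lie$ denotes the spectral Lie operad and $[n-1]$ is its $(n{-}1)$-fold operadic suspension. This map is the homotopical realization of the Browder bracket: it equips every nonunital $\mathbb{E}_n$-algebra $A$ with a spectral Lie structure on its $(n{-}1)$-fold shift. Restriction along $\varphi$, composed with the equivalence $\Alg_{\Lie[n-1]}(\C)\simeq \Alg_\Lie(\C)$ induced by the shift $[n-1]$ on $\C$, defines a functor $\Alg^{\mathrm{nu}}_{\mathbb{E}_n}(\C)\to\Alg_\Lie(\C)$ whose composite with the forgetful functor to $\C$ is, by construction, the shift $[n-1]$. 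Hence the square of right adjoints commutes on the nose.

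This candidate right adjoint preserves small limits and is accessible, since operadic restrictions have these properties and both algebra categories are presentable (because $\C$ is presentable stable with bicontinuous tensor product). The adjoint functor theorem then produces $U_n$, establishing the top adjunction. Passage to mates forces the diagram of left adjoints to commute up to canonical equivalence, yielding $U_n\circ\Lie\simeq\mathbb{E}_n^{\mathrm{nu}}\circ[1-n]$ and completing the verification of the square.

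The principal obstacle is the construction of $\varphi$ with full $\infty$-categorical coherence and in all characteristics. Rationally one can factor through the Poisson operad $\mathrm{Pois}_n\simeq H_*(\mathbb{E}_n;\mathbb{Q})$ and extract the Lie summand, but integrally and spectrally this is unavailable. The natural route is via Koszul duality: the spectral Lie operad is Koszul dual to nonunital commutative, the latter maps to the cooperadic bar construction of $\mathbb{E}_n^{\mathrm{nu}}$ via the top cell of configuration spaces, and Koszul dualizing produces $\varphi$. A complementary realization identifies $\Lie[n-1]$ with the symmetric sequence of Goodwillie derivatives of the free $\mathbb{E}_n^{\mathrm{nu}}$-algebra functor, from which $\varphi$ arises as the linear approximation of the free-forgetful comparison. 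The paper's pairing of Koszul duality with Day convolution supplies precisely the setting in which such a $\varphi$, and indeed the full enveloping functor $U_n$, can be constructed cleanly.
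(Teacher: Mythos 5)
Your proposal rests on a map of symmetric $\infty$-operads $\varphi\colon \Lie[n-1]\to\mathbb{E}_n^{\mathrm{nu}}$, with the forgetful functor obtained as restriction along $\varphi$ and $U_n$ by the adjoint functor theorem. This is a genuinely different architecture from the paper's: the paper never produces, or needs, a map of operads. Instead it models nonunital $\mathbb{E}_n$-algebras as factorizable cocommutative coalgebras in a right Day convolution structure on constructible cosheaves over the absolute Ran category (Propositions \ref{prop:disjoint day}, \ref{prop:factorizable 2}), applies Francis--Gaitsgory Koszul duality in a pro-nilpotent setting (Theorem \ref{thm:duality}, Lemma \ref{lem:factorization}) to identify these with Lie algebras supported on the diagonal for the disjoint monoidal structure (Corollary \ref{cor:lie model}), and separately embeds $\Alg_\Lie(\C)$ into Lie algebras for the overlapping monoidal structure (Proposition \ref{prop:overlapping lie}). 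The forgetful functor is then the dashed factorization induced by the lax monoidal natural transformation $\otimes^\cup\to\otimes^\amalg$, which respects the condition of diagonal support; no operad map ever enters.

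The difficulty with your route is exactly the construction of $\varphi$, which you flag as the principal obstacle and then leave unresolved. ``Koszul dualizing'' a cooperad map $\Com^{\mathrm{nu}}\to B\mathbb{E}_n^{\mathrm{nu}}$ does not manifestly yield a map of operads in the needed direction: Koszul duality is contravariant, and at the level of operads it is only an equivalence under finiteness or connectivity hypotheses that fail here. The Goodwillie-derivatives sketch faces the analogous problem of lifting a symmetric-sequence identification to a coherent $\infty$-operad map. A fully coherent spectral $\varphi$ in this generality was not available at the time of the paper, and the paper itself says explicitly that the adjunction obtained by restriction along Fresse's characteristic-zero operad map is only \emph{expected} to agree with the one it constructs. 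So even granting $\varphi$, you would be producing an a priori different enveloping functor, and you would need a further argument to reconcile it with the one the paper constructs and computes via Theorem \ref{thm:PBW}. The step you defer is not a technicality absorbed by the paper's machinery; it is the entire content of the theorem, and the paper's contribution is a construction that routes around the need for $\varphi$ altogether.
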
 

Here we use the notation $[r]$ of homological algebra for the $r$-fold suspension in $\C$, and $\Alg_\Lie(\C)$ denotes the $\infty$-category of \emph{spectral Lie algebras} in $\C$; see \S\ref{sec:Lie section}. 

The left adjoint $U_n$ is the titular higher enveloping algebra functor, and its construction is the main objective of this paper; however, almost all of the work will go toward exhibiting the right adjoint forgetful functor---see \S\ref{section:sketch} below for an outline of this argument. The functor $U_n$ provides a wide class of examples of $\mathbb{E}_n$-algebras, one which crucially includes the free algebras; indeed, a free $\mathbb{E}_n$-algebra---which is simply a tensor algebra when $n=1$---is the higher enveloping algebra of a free Lie algebra. Since free $\mathbb{E}_n$-algebras are built from the homotopy types of configuration spaces, this formal consequence of Theorem \ref{thm:baby adjunction} suggests an approach to the study of these spaces premised on Lie algebras. We will return to this idea in \S\ref{section:PBW} below.

We close this section by pointing out that, if $\C$ is taken to be the underlying $\infty$-category of the model category of chain complexes over a field $k$ of characteristic zero, then $\Alg_\Lie(\C)$ is equivalent to the underlying $\infty$-category of the category of differential graded Lie algebras with the induced model structure. Thus, Theorem \ref{thm:baby adjunction} implies that dg $E_n$-algebras---$C_*(\Omega^nX;\mathbb{Q})$, for example---carry a shifted dg Lie structure, up to homotopy; in particular, there is an induced adjunction at the level of homotopy categories. A Quillen adjunction, given by induction and restriction along the map of operads constructed in \cite{Fresse:KDCCILSS}, is also available in this case. The value of the derived left adjoint of this adjunction is another candidate for a higher enveloping algebra, and we expect the two to coincide. We choose to offer this alternate construction both because of its greater generality and because it admits a remarkably explicit description, to which we now turn.

\subsection{Poincar\'{e}-Birkhoff-Witt}\label{section:PBW} Classically, one forces the universal enveloping algebra $U(L)$ to have the desired universal mapping property by defining it as the quotient of the free associative algebra on $L$ by the relation $$x\otimes y-y\otimes x=[x,y].$$ This relation is inhomogeneous with the respect to the natural grading of the tensor algebra by word length and, after passing to the associated graded for the induced filtration of $U(L)$, becomes the defining relation of the symmetric algebra. The classical Poincar\'{e}-Birkhoff-Witt theorem formalizes this observation, asserting an isomorphism \[\mathrm{gr}\,U(L)\cong \Sym(L).\] In particular, at the level of underlying vector spaces, the universal enveloping algebra is indistinguishable from the symmetric algebra.

We prove the following generalization of this result (see \S\ref{sec:enveloping algebras} for the full version).

\begin{introtheorem}[$G=\{e\}$]\label{thm:baby PBW}
Let $L$ be a Lie algebra in $\C$. There is a natural equivalence of augmented $\mathbb{E}_n$-algebras \[U_n(L)\oplus 1_\C\simeq C^\Lie(\Omega^n L).\]
\end{introtheorem}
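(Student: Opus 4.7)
The plan is to combine the universal property of $U_n$ established in Theorem A with an explicit $\mathbb{E}_n$-algebra structure on $C^\Lie(\Omega^n L)$ arising from the chiral/factorization framework that the body of the paper sets up. First, I would observe that both $L\mapsto U_n(L)\oplus 1_\C$ and $L\mapsto C^\Lie(\Omega^n L)$ commute with sifted colimits in $L$: the former because $U_n$ is a left adjoint by Theorem A and unitization is a left adjoint between nonunital and augmented $\mathbb{E}_n$-algebras, the latter because $C^\Lie$ admits a standard simplicial bar-style presentation whose geometric realization intertwines with sifted colimits, and $\Omega^n$ on the stable $\infty$-category $\Alg_\Lie(\C)$ preserves all colimits. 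Since every Lie algebra in $\C$ is a sifted colimit of free Lie algebras on objects of $\C$, it therefore suffices to exhibit the equivalence when $L=\Free_\Lie(V)$.

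In this free case, Theorem A computes the left-hand side explicitly: commutativity of the square identifies $U_n(\Free_\Lie V)\oplus 1_\C$ with $\Free_{\mathbb{E}_n}^{+}(V[n-1])$, the free augmented $\mathbb{E}_n$-algebra on the shifted generator. For the right-hand side, I would use the paper's chiral/Day-convolution machinery to endow $C^\Lie(\Omega^n L)$ with a natural augmented $\mathbb{E}_n$-algebra structure, reading it as the chiral enveloping algebra of $\Omega^n L$ and invoking the equivalence between locally constant factorization algebras on $\mathbb{R}^n$ and $\mathbb{E}_n$-algebras. With this structure in place, the functor $L\mapsto C^\Lie(\Omega^n L)$ is itself a candidate for the left adjoint to the restriction $\Alg_{\mathbb{E}_n}^{\mathrm{nu}}(\C)\to\C\xrightarrow{[1-n]}\Alg_\Lie(\C)$ appearing as the right half of the square in Theorem A. The natural transformation one wants is assembled from the unit of that adjunction, via the tautological Lie map $L\to\Omega^n\,\mathrm{res}\,C^\Lie(\Omega^n L)$ coming from bar--cobar duality, and uniqueness of left adjoints then produces the desired equivalence of augmented $\mathbb{E}_n$-algebras.

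The main obstacle is ensuring that the $\mathbb{E}_n$-algebra structure on $C^\Lie(\Omega^n L)$ obtained from the chiral/factorization picture really is the left-adjoint structure: concretely, one must verify that on $L=\Free_\Lie(V)$ the Koszul/Chevalley--Eilenberg filtration of the symmetric-algebra presentation $\Sym(\Free_\Lie(V)[1-n])$ endowed with its CE differential agrees, as a filtered augmented $\mathbb{E}_n$-algebra, with the configuration-space filtration of $\Free_{\mathbb{E}_n}^{+}(V[n-1])$. This identification is exactly the higher PBW statement in its most geometric form, and it is precisely the point at which the Koszul duality between $\mathbb{E}_n$ and the shifted Lie operad, together with the Day convolution and chiral-algebra technology developed earlier in the paper, is indispensable; I expect the bulk of the work of the proof to go into setting up this comparison, after which the extension from free to arbitrary $L$ is handled automatically by the sifted-colimit reduction above.
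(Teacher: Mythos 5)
Your proposal takes a genuinely different route from the paper's. The paper does not reduce to free Lie algebras; instead it exploits the way the forgetful functor of Theorem~A is \emph{constructed} --- as a composite of the fully faithful embeddings of $\Alg^\mathrm{nu}_{\mathbb{E}_G}(\C)$ and $\Alg_\Lie(\Mod_G(\C))$ into Lie algebras in constructible cosheaves, one for the disjoint and one for the overlapping monoidal structure, with the identity functor between them. Because the left adjoint $U_G$ is manufactured inside this ambient diagram, one has the formula $U_G(L)\leftrightarrow C^\Lie_\amalg(\Ind^\amalg_\cup(\delta_!L_c))$ immediately (Corollary~\ref{cor:adjoint identification}), and the proof of Theorem~\ref{thm:PBW} is then a three-line chase: Lie chains commute with the induction functor $\Ind^\amalg_\cup$ (Lemma~\ref{lem:envelope is star}) and with the globalization $\pi_!$ (Lemma~\ref{lem:globalization chains}), and one evaluates on $\mathbb{R}^n$ via the Verdier duality of Lemma~\ref{lem:verdier}. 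No resolution by free objects, no sifted-colimit bootstrap, and no separate filtration comparison is invoked; the identification holds for arbitrary $L$ in one stroke.

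The gap in your plan is exactly the step you flag as the ``main obstacle.'' Matching the CE filtration of $\Sym(\Free_\Lie(V)[1-n])$ with the configuration-space filtration of $\Free^{+}_{\mathbb{E}_n}(V[n-1])$ \emph{as filtered augmented $\mathbb{E}_n$-algebras} is not a technical lemma available for citation; it is the PBW theorem restricted to free Lie algebras, and there is no indication that it would be easier to establish directly than the general case. There is also a circularity latent in your setup: to construct the natural transformation you plan to compare, you need an $\mathbb{E}_n$-algebra structure on $C^\Lie(\Omega^n L)$ together with a Lie map from $L$ to the appropriate shift of the forgetful functor applied to it, and each of these already traffics through the chiral/Koszul-duality apparatus whose output you are trying to verify --- ``reading $C^\Lie(\Omega^n L)$ as the chiral enveloping algebra'' is, in the paper's formalism, essentially the content of the theorem. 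In the paper's argument these circularities are dissolved because the $\mathbb{E}_n$-structure on the right-hand side is never posited independently; it is transported along the chain of equivalences from $U_G(L)$, so the compatibility of structures is automatic rather than something to be checked afterwards. Your plan is coherent as a strategy, but the work it defers to the free case is the whole theorem, and the auxiliary verifications (sifted-colimit commutation, naturality of the comparison map) add overhead without lightening that load.
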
 Here, $C^\Lie$ denotes the functor of Lie chains, defined in this generality as the monadic bar construction against the free Lie algebra monad.

In order to understand how this result generalizes the Poincar\'{e}-Birkhoff-Witt theorem, we recall that, in characteristic zero, the functor $C^\Lie$ is modeled by the classical Chevalley-Eilenberg complex $CE(L)$, which is the graded vector space $\Sym(L[1])$ equipped with a differential determined by the Lie bracket of $L$ (see \cite[6]{Fresse:KDOHPP} for a comparison), while the cotensor $\Omega^nL\simeq L^{(\mathbb{R}^n)^+}$ is modeled by the tensor product $A_{PL}(S^n)\otimes L$, where $A_{PL}$ is the functor of (reduced) piecewise-linear de Rham forms \cite[4.8.3]{Hinich:HAHA}. Since the Lie bracket in this tensor product is homotopically trivial, we have the equivalence of underlying chain complexes \[U_n(L)\simeq CE(A_{PL}(S^n)\otimes L)\simeq \Sym(L[1-n]).\]

The algebra appearing on the righthand side of the equivalence of Theorem \ref{thm:PBW} has long been an object of interest. In particular, in its guise as a factorization algebra, it is an important player in the approach to quantum field theory pioneered by Costello and Gwilliam---see \cite[4.6-7]{Gwilliam:FAFFT} and \cite[3.6.6]{CostelloGwilliam:FAQFTI}, for example. From this point of view, Theorem \ref{thm:baby adjunction} may be interpreted as endowing this well-known object with a useful universal property.

In combination with the theory of factorization homology, Theorem \ref{thm:PBW} offers excellent computational opportunities. Some of these are explored in \cite{Knudsen:BNSCSVFH}, \cite{DrummondColeKnudsen:BNCSS}, and \cite{BrantnerHahnKnudsen:HLALTTCS}, which employ the theory developed here in studying the homology of configuration spaces of manifolds. As a further illustration of its effectiveness, we include the following application.

\begin{introtheorem}\label{thm:configurations}
Let $M$ be an $n$-manifold. For any $k\geq0$, the $\Sigma_k$-equivariant homotopy type of $\Sigma_+^\infty\Conf_k(M)$ depends only on the homotopy type of the one-point compactification $M^+$. In particular, the stable homotopy types of the unordered configuration spaces of $M$ depend only on the homotopy type of $M^+$.
\end{introtheorem}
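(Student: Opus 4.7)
The strategy is to realize each $\Sigma_k$-equivariant stable homotopy type $\Sigma_+^\infty\Conf_k(M)$ as a piece of the factorization homology $\int_M \mathbb{E}_n^{\mathrm{nu}}(V)$ for a tautological generator $V$, and then to rewrite this factorization homology via the adjunction of Theorem A and a manifold-level version of the PBW equivalence of Theorem B into an expression depending only on the pointed space $M^+$.

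To capture equivariant information, I would take $\C$ to be the stable presentable symmetric monoidal $\infty$-category of symmetric sequences of spectra, i.e., $\Fun(\coprod_{k\geq 0} B\Sigma_k,\Sp)$ equipped with Day convolution, and let $V\in\C$ be the generator concentrated in arity one. With these choices, unwinding definitions shows that $\int_M \mathbb{E}_n^{\mathrm{nu}}(V)$ is the symmetric sequence whose $k$th term is $\Sigma_+^\infty\Conf_k(M)$ with its natural $\Sigma_k$-action, so it suffices to exhibit $\int_M \mathbb{E}_n^{\mathrm{nu}}(V)$ as an invariant of $M^+$ alone.

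By Theorem A, $\mathbb{E}_n^{\mathrm{nu}}(V)\simeq U_n(\Free_\Lie(V[n-1]))$, reducing the problem to an analysis of $\int_M U_n(L)$. The central technical input is a global PBW formula
$$\int_M U_n(L)\oplus 1_\C \simeq C^\Lie\bigl(L^{M^+}\bigr),$$
which specializes to Theorem B when $M=\mathbb{R}^n$ and $M^+=S^n$. I would prove it by checking that both sides, regarded as functors from $\Mfld_n$ to $\C$, satisfy the disk-gluing property that characterizes factorization homology: the left side does by construction, while the right side inherits it from the fact that the cotensor $L^{(-)}$ converts pushouts of pointed spaces into pullbacks in Lie algebras and $C^\Lie$ preserves the resulting sifted colimits. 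Agreement on Euclidean spaces is then precisely the content of Theorem B.

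Combining the three inputs produces
$$\int_M \mathbb{E}_n^{\mathrm{nu}}(V)\oplus 1_\C \simeq C^\Lie\bigl(\Free_\Lie(V[n-1])^{M^+}\bigr),$$
an expression that manifestly depends on $M$ only through the pointed homotopy type of $M^+$. Reading off the $k$th arity recovers the $\Sigma_k$-equivariant stable homotopy type of $\Conf_k(M)$ as a functor of $M^+$, and passing to $\Sigma_k$-orbits yields the unordered statement. The principal obstacle I anticipate is the global PBW identity: matching the disk-indexed colimit defining factorization homology with the cotensor-into-Lie-chains construction on the right requires a careful comparison of the disk category of $M$ with a basepoint-sensitive decomposition of $M^+$, and this is where the bulk of the work will lie.
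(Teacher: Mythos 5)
Your reduction to factorization homology, the choice of symmetric sequences of spectra as target, and the idea of a manifold-level PBW formula are all in the right spirit and close to the paper's argument. But there are two linked gaps, and the second one is fatal as stated.

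First, you work throughout with the trivial structure group ($\mathbb{E}_n$, $U_n$, and $\Fun^\otimes(\Mfld_n,\C)$), which only makes sense when $M$ carries a framing. The theorem is stated for an arbitrary $n$-manifold, so you must use the structured versions with $G=\Top(n)$ — this is precisely why Theorems \ref{thm:adjunction} and \ref{thm:PBW} are proved for arbitrary $G$ and why the paper specializes to $G=\Top(n)$ in \S\ref{sec:configurations section}. In the structured setting your proposed global PBW identity does not take the form $C^\Lie(L^{M^+})$; by \cite[5.13]{AyalaFrancis:FHTM} the correct statement cotensors against the $\Top(n)$-space $\Fr_{M^+}$ associated to the tangent microbundle, giving $C^\Lie\bigl(\Map^{\Top(n)}(\Fr_{M^+}, \Lie(\Sigma^n\mathbb{S}^{-1}))\bigr)$.

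Second, and more seriously: even once this is corrected, the resulting expression does \emph{not} ``manifestly depend on $M$ only through the pointed homotopy type of $M^+$,'' because the frame bundle $\Fr_{M^+}$ (equivalently the tangent microbundle of $M$) is emphatically not a homotopy invariant of $M$ or of $M^+$. Your argument stops exactly where the real work begins. The missing input is the observation that the $\Top(n)$-action on $\Sigma^n\mathbb{S}^{-1}$ (and hence on $\Lie(\Sigma^n\mathbb{S}^{-1})$) factors through the stable $J$-homomorphism $\Top(n)\to \mathrm{hAut}(\mathbb{S})$, so that the cotensor depends only on the underlying stable spherical fibration of the tangent microbundle; combined with the fact — via Atiyah duality — that this stable spherical fibration \emph{is} a homotopy invariant of $M^+$, this closes the argument. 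Without this step, identifying $\int_M\mathbb{E}^{\mathrm{nu}}_{\Top(n)}(\mathbb{S})$ with a functor of $M^+$ is not justified, since two $n$-manifolds with homotopy equivalent one-point compactifications can have non-isomorphic tangent microbundles.

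As a smaller remark, your sketch of the global PBW formula via a disk-gluing/uniqueness characterization is reasonable, and roughly corresponds to the paper's combination of Theorem \ref{thm:PBW} with the symmetric monoidal pushforward $\pi_!$ and \cite[5.13]{AyalaFrancis:FHTM}; you are right that this step carries real content, but it is the $J$-homomorphism/Atiyah-duality argument, not the PBW comparison, that is the genuinely missing idea.
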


This result improves on \cite[B]{AouinaKlein:HICS}, which is non-equivariant and requires $M$ to be compact, connected, and piecewise-linear; however, that work also provides explicit bounds on the number of suspensions necessary to achieve homotopy invariance. Our methods are unable to replicate such bounds.

\subsection{The Ran space} Our approach to the results outlined above lies in importing the ideas of \cite{BeilinsonDrinfeld:CA} and \cite{FrancisGaitsgory:CKD} from algebraic geometry into topology. We now recall some of the context of those works. Throughout this motivational section, the reader unfamiliar with the theory of $D$-modules is invited instead to imagine sheaves, as $D$-modules will play no role in the remainder of the paper.

The starting point is the so-called Ran space of a variety $X$, which may be thought of heuristically as the space \[\mathrm{Ran}(X)=\{S\subset X: 0<|S|<\infty\}\] of finite subsets of $X$ (in the topological context, this heuristic is a definition). The collection of $D$-modules on $\Ran(X)$ carries a remarkable monoidal structure, the \emph{chiral tensor product}, which is computed on stalks by the formula \[(F\otimes^\ch G)_{S}=\bigoplus_{S=S_1\amalg S_2}F_{S_1}\otimes G_{S_2}.\] Motivated by conformal field theory, Beilinson and Drinfeld were led to consider a certain algebraic structure emerging from this tensor product, which may be phrased either in cocommutative terms as a \emph{factorization algebra} or in Lie terms as a \emph{chiral algebra}. As shown in \cite{FrancisGaitsgory:CKD}, this duality is an instance of the Koszul duality between cocommutative coalgebras and Lie algebras.

One advantage of working on the chiral or Lie side is that one is able to construct examples by passing through a second monoidal structure, the \emph{star tensor product}, which is computed on stalks as \[(F\otimes^\star G)_{S}=\bigoplus_{S=S_1\cup S_2}F_{S_1}\otimes G_{S_2}.\] This tensor product interpolates between the ordinary tensor product and the chiral tensor product, and through it an ordinary Lie algebra gives rise to a chiral Lie algebra, its \emph{chiral envelope}, and thereby to a factorization algebra.

The connection to our previous discussion is indicated by a theorem of Lurie \cite[5.5.4.10]{Lurie:HA}, which asserts that $\mathbb{E}_n$-algebras may be realized as certain ``factorizable'' cosheaves on the Ran space of $\mathbb{R}^n$---roughly, these are cosheaves $F$ equipped with equivalences $F_S\simeq \bigotimes_{x\in S} F_x$ for every finite subset $S\subseteq M$. Motivated by this theorem, our strategy will be to build a topological framework corresponding to that of \cite{BeilinsonDrinfeld:CA} and \cite{FrancisGaitsgory:CKD}, and to construct higher enveloping algebras by ``passing to the chiral side.''

In more detail, we introduce the \emph{absolute Ran category}, a category fibered over the category of framed manifolds, and an $\infty$-category of \emph{constructible cosheaves} on this category. We define two symmetric monoidal structures on this $\infty$-category and proceed according to the following table of analogies:\\

\begin{center} \begin{tabular}{ | c | c | } \hline $D$-module on $\Ran(X)$ & constructible cosheaf on $\Ran_n$ \\ 
\hline chiral tensor product & disjoint tensor product \\ 
\hline star tensor product & overlapping tensor product \\ 
\hline factorization algebra on $X$ & nonunital $\mathbb{E}_n$-algebra \\ 
\hline chiral envelope & higher enveloping algebra\\
\hline\end{tabular} \end{center}\vspace{.1in}

As mentioned above, factorization algebras in the context of \cite{BeilinsonDrinfeld:CA} and \cite{FrancisGaitsgory:CKD} are defined to be a special kind of cocommutative coalgebra, while $\mathbb{E}_n$-algebras are modeled rather as certain symmetric monoidal functors. Thus, if this table of analogies is to have any sense, we must answer the following question: \emph{how is a symmetric monoidal functor like a cocommutative coalgebra?}

\subsection{Day convolution} In a very general setting, when $\W$ is symmetric monoidal, $\Fun(\V,\W)$ carries the rather pedestrian symmetric monoidal structure of pointwise tensor product. If $\V$ is also symmetric monoidal, however, one obtains a more interesting symmetric monoidal structure, that of \emph{Day convolution}. Defined in the setting of ordinary categories in \cite{Day:CCF} and in the $\infty$-categorical context \cite{Glasman:DCIC} (see also \cite[4.8.1.12]{Lurie:HA}), the convolution of functors $F$ and $G$ is the left Kan extension in the following diagram: \[\xymatrix{\V\times\V\ar[d]_-{\otimes_\V}\ar[r]^-{F\times G}& \W\times\W\ar[r]^-{\otimes_\W}&\W\\
\V
}\] One of the attractive features of this tensor product is that providing a functor $F$ with the structure of a commutative algebra for Day convolution is equivalent to providing $F$ with a lax monoidal structure, with the lax structure maps supplied by the components \[\xymatrix{
F(v_1)\otimes F(v_2)\ar[r]&\displaystyle\colim_{v_1\otimes v_1\to v}F(v_1)\otimes F(v_2)\ar[r]&F(v).
}\] of the algebra structure map. In this way, symmetric monoidal functors from $\V$ to $\W$ naturally form a full subcategory of commutative algebras for Day convolution. 

In order to answer the question posed above, we turn the discussion of the previous paragraph on its head. Taking the right Kan extension instead of the left produces a different tensor product, for which a cocommutative coalgebra is precisely an \emph{oplax} functor. Since a symmetric monoidal functor may be viewed equally as a lax functor with an extra property or as an oplax functor with an extra property, we find that such functors naturally form a full subcategory of cocommutative coalgebras for this alternate form of convolution.

\subsection{Sketch of argument}\label{section:sketch} Once fleshed out, the ideas alluded to so far constitute a passage among four different models for $\mathbb{E}_n$-algebras, as indicated in the following schematic: \[\left\{
\begin{tabular}{c}
    topological \\
    model
\end{tabular}
\right\}\overset{(\ref{prop:discrete model})}{\simeq}\left\{
\begin{tabular}{c}
    discrete \\
    model
\end{tabular}
\right\}\overset{(\ref{prop:factorizable 2})}{\simeq}\left\{
\begin{tabular}{c}
    cocommutative \\
    model
\end{tabular}
\right\}\overset{(\ref{lem:factorization})}{\simeq}\left\{
\begin{tabular}{c}
    Lie \\
    model
\end{tabular}
\right\}.\] The leftmost equivalence, which closely follows the ideas of \cite[5]{Lurie:HA}, models $\mathbb{E}_n$-algebras as symmetric monoidal functors out of a certain category disjoint unions of disks, while the middle equivalence is provided by the theory of Day convolution. The rightmost equivalence passes through Koszul duality between Lie algebras and cocommutative coalgebras as in \cite{Quillen:RHT}, the absence of connectivity hypotheses in this equivalence owing to the notion of symmetric monoidal \emph{pro-nilpotence} introduced in \cite{FrancisGaitsgory:CKD}. Combining these equivalences produces an embedding of nonunital $\mathbb{E}_n$-algebras into the $\infty$-category of Lie algebras in the disjoint monoidal structure on constructible cosheaves on the absolute Ran category. 

Now, the disjoint and overlapping tensor products are related by a natural transformation, so that Lie algebras for the former may be viewed as Lie algebras for the latter. We use this natural relationship to produce the forgetful functor of Theorem \ref{thm:adjunction} by embedding its source and target into these two categories of Lie algebras, as depicted in the following diagram:

 \[\xymatrix{
\Alg^\mathrm{nu}_{\mathbb{E}_n}(\C)\ar@{-->}[d]\ar@{^{(}->}[r]^-{(\ref{cor:lie model})}&\Alg_\Lie(\cShv^\cbl(\Ran_n,\C)_\amalg)\ar[d]\\
\Alg_{\Lie}(\C)\ar@{^{(}->}[r]^-{(\ref{prop:overlapping lie})}&\Alg_\Lie(\cShv^\cbl(\Ran_n,\C)_\cup).
}\] 

From here, Theorem \ref{thm:PBW} is within close reach, since the functor $C^\Lie$, as the avatar of Koszul duality, is baked into the definition of the forgetful functor.

\subsection{Linear outline}

\begin{enumerate}
\item[\S\ref{sec:structured manifolds}] We discuss $G$-framed manifolds and related algebraic structures, comparing topological and discrete incarnations.\\
\item[\S\ref{sec:Ran section}-\S\ref{sec:push pull section}] We introduce the absolute Ran category and constructible cosheaves on it, and we discuss various push and pull operations for the latter.\\
\item[\S\ref{sec:factorizable coalgebras}-\S\ref{sec:nilpotence}] We introduce the disjoint monoidal structure on constructible cosheaves, realizing disk algebras as \emph{factorizable} cocommutative coalgebras therein, and we show that this monoidal structure is pro-nilpotent.\\
\item[\S\ref{sec:Lie section}-\S\ref{sec:duality and factorization}] We discuss the Koszul duality relating cocommutative coalgebras and Lie algebras, and we characterize those Lie algebras whose associated cocommutative coalgebras are factorizable, thereby obtaining a Lie theoretic model for disk algebras.\\
\item[\S\ref{sec:enveloping algebras}-\S\ref{sec:configurations section}] We complete the proofs of the main theorems.\\
\item[\S\ref{sec:functoriality section}-\S\ref{sec:monoidal section}] We provide various deferred constructions and arguments.
\end{enumerate}

\subsection{Acknowledgments} This paper is largely an attempt at drawing the straightest lines possible among \cite[5]{Lurie:HA}, \cite{FrancisGaitsgory:CKD}, \cite{AyalaFrancis:FHTM}, and \cite{Glasman:DCIC} and it is a pleasure to acknowledge my intellectual debt to the authors of these works. Of equal importance are the debts that lie outside the scope of the literature, since the results proven here, as well as the broad strokes of their proofs, have been ``in the air'' for some time and are certainly well known to experts; in particular, statements of versions of these results appeared in preliminary versions of \cite{AyalaFrancis:FHTM}. I am grateful for the intellectual generosity of David Ayala and John Francis, without which this paper would not exist. 

I would like to thank Owen Gwilliam, Haynes Miller, and Emily Riehl for helpful conversations. I thank the referee for her or his comments and for the pun ``$E_n$veloping algebra.'' Finally, I would like to express my gratitude to the Mathematisches Forschungsinstitut Oberwolfach, the Hausdorff Research Institute for Mathematics in Bonn, and the University of Copenhagen for their hospitality during the writing of this paper. 

This paper is derived from my Ph.D. thesis \cite{Knudsen:HEACSM}, and part of its writing was carried out with the support of an NSF postdoctoral fellowship.

\subsection{Conventions}
\begin{enumerate}
\item We work in an $\infty$-categorical context, where for us an $\infty$-category will always mean a quasicategory. The standard references here are \cite{Lurie:HTT} and \cite{Lurie:HA}. We use the same symbol for an ordinary or topological category and the corresponding $\infty$-category, as well as for a topological space and its $\infty$-groupoid of singular simplices. The terms initial and final are always understood in the $\infty$-categorical sense, and (co)limits are alway $\infty$-categorical (co)limits, which correspond (in the presence of a comparison to some model category) to homotopy (co)limits. 
\item In a monoidal context, we write $\Sym^k(c):=(c^{\otimes k})_{\Sigma_k}$, where, in accordance with the previous convention, we intend the $\infty$-categorical coinvariants. We may distinguish among multiple monoidal structures on the same underlying $\infty$-category using superscripts (e.g., $\otimes^\amalg$ and $\otimes^\cup$). When performing constructions involving a monoidal structure, we may use the corresponding symbol to indicate which monoidal structure is intended (e.g., $\Sym^k_\amalg$).
\item We use the superscript ``$\mathrm{nu}$'' to indicate nonunital algebraic structures and non-counital coalgebraic structures (e.g. $\Coalg_\Com^{\mathrm{nu}}(\V)$ for the $\infty$-category of non-counital cocommutative coalgebras in the possibly nonunital symmetric monoidal $\infty$-category $\V$). For more on nonunitality in $\infty$-categorical algebra, we refer the reader to \cite[5.4]{Lurie:HA}.
\item Where possible, left adjoints are written on the left to aid in their identification. In more spatially complicated situations, left adjoints are written with bent arrows. A hooked arrow indicates a fully faithful functor.
\item In an abstract setting, restriction along the functor $j$ is denoted $j^\natural$ (we owe this piece of notation to David Gepner), left Kan extension by $j_!$, and right Kan estension by $j_*$. In situations more closely related to geometry, we may employ the notation $j^!$ and $j^*$, where $j^!$ is always the right adjoint of $j_!$ and $j^*$ the left adjoint of $j_*$. The reader is warned that the functors $j^*$ and $j^!$ may have no relation to the functor $j^\natural$.
\item We write $c[k]$ for the $k$-fold suspension of the object $c$. With a structure group $G\to \Top(n)$ in mind, we reserve the notation $\Sigma^nc=(\mathbb{R}^n)^+\otimes c$ for the same object with $G$-action inherited from $\mathbb{R}^n$; if $c$ already carried a $G$-action, then $\Sigma^nc$ carries the diagonal action. Thus, we have the following commuting diagram: \[\xymatrix{
\Mod_G(\C)\ar[rr]^-{\Sigma^n}&&\Mod_G(\C)\ar[dd]\\\\
\C\ar[uurr]^-{\Sigma^n}\ar[uu]^-{\mathrm{triv}}\ar[rr]^{[n]}&&\C
}\] The dual remarks apply to $\Omega^n$. We regard $\Mod_G$ as symmetric monoidal with the Cartesian monoidal structure (or equally, by stability, the coCartesian monoidal structure).
\item Every manifold considered herein may be embedded as the interior of a compact manifold with boundary (such an embedding is not part of the data).
\item We write $\Fin$ for the category of finite sets and $\Surj$ for the wide subcategory on the surjective functions. An object of $\Fin$ is typically denoted by a letter such as $I$ or $J$, and we write $U_I$ for the $I$-indexed set $\{U_i\}_{i\in I}$. For a topological space $X$ with connected components $\{X_i\}_{i\in I}$, we identify $X$ with the $I$-indexed set $X_I$.
\end{enumerate}

\section{The Ran space}\subsection{Structured manifolds and disk algebras}\label{sec:structured manifolds}

Following \cite[5.1]{Lurie:HA}, we write $\mathbb{E}_n$ for the $\infty$-operad obtained from the topological operad of rectilinear embeddings of open $n$-dimensional unit cubes (see \cite{BoardmanVogt:HIASTS}, \cite{May:GILS}). The $\infty$-operad $\mathbb{E}_n$ and its algebras are the fundamental objects of study for us, but it will be useful to work in a slightly more general setting. The references for the material in this section are \cite{AyalaFrancis:FHTM} and \cite[5]{Lurie:HA}, although, for the sake of typographical clarity, we shall at times depart from the notation of these references. We make almost no use of the theory of $\infty$-operads outside of this section, but the reader seeking further information should consult \cite{Lurie:HA}.

As usual, we let $\Top(n)$ denote the topological group of self-homeomorphisms of $\mathbb{R}^n$. Recall that an $n$-manifold has a tangent microbundle classified by a map $M\to B\Top(n)$, well-defined up to homotopy, called the \emph{tangent classifier} of $M$. 

\begin{definition} Let $G\to \Top(n)$ be a continuous group homomorphism. A $G$-\emph{framing} on an $n$-manifold $M$ is a map $M\to BG$ lifting the homotopy class of the tangent classifier.
\end{definition}

Following \cite{AyalaFrancis:FHTM}, there is a topological category $\MMfld_G$ of $G$-framed $n$-manifolds and $G$-framed embeddings among them, where the space of $G$-framed embeddings from $M$ to $N$ is defined as the homotopy pullback \[\xymatrix{\Emb^G(M,N)\ar[d]\ar[r]&\Map_{/BG}(M,N)\ar[d]\\
\Emb(M,N)\ar[r]&\Map_{/B\Top(n)}(M,N).
}
\] Disjoint union of $G$-framed manifolds endows $\MMfld_G$ with a symmetric monoidal structure. 

\begin{remark}
In \cite{AyalaFrancis:FHTM}, the authors consider manifolds structured by an arbitrary fibration $B\to B\Top(n)$. In the notation of that reference, $\MMfld_G:=\MMfld_n^{BG}$. Our results, and their proofs, are valid in this more general setting.
\end{remark}

\begin{definition}
The $\infty$-operad $\mathbb{E}_G$ is the operadic nerve of the endomorphism operad of $\mathbb{R}^n$ in $\MMfld_G$. 
\end{definition}

\begin{remark}
In the notation of \cite[5]{Lurie:HA}, $\mathbb{E}_G:=\mathbb{E}_{BG}$. This object is an $\infty$-operadic analogue of the semidirect product of $G$ with the little disks operad in the sense of \cite{SalvatoreWahl:FDOBVA}.
\end{remark}

\begin{example}
Taking $G$ to be the trivial group, \cite[5.4.2.15]{Lurie:HA} provides an equivalence $\mathbb{E}_G\simeq \mathbb{E}_n$ of $\infty$-operads. 
\end{example}

In general, an $\mathbb{E}_G$-algebra may be thought of informally an $\mathbb{E}_n$-algebra together with a compatible $G$-action. Indeed, there is a forgetful functor \[\Alg_{\mathbb{E}_G}(\C)\to \Mod_G(\C):=\Fun(BG, \C),\] which is obtained by restricting the action of $\mathbb{E}_G$ to the space of unary operations, the topological monoid $\Emb^G(\mathbb{R}^n,\mathbb{R}^n)\simeq G$ (see \cite[2.8]{AyalaFrancis:FHTM}). 

In what follows, we will be interested in the $\infty$-category $\Alg_{\mathbb{E}_G}^\mathrm{nu}(\C)$ of nonunital $\mathbb{E}_G$-algebras in a suitable stable target $\C$. We now state the full version of the main theorem.

\begin{bigtheorem}\label{thm:adjunction}
Let $\C$ be a stable, presentably symmetric monoidal $\infty$-category and $G\to \Top(n)$ a continuous group homomorphism. There is a forgetful functor from nonunital $\mathbb{E}_G$-algebras to Lie algebras in $G$-modules in $\C$ fitting into a commuting diagram of adjunctions \[\xymatrix{
\Alg_{\Lie}(\Mod_G(\C))\ar@/^1.2pc/[rr]^{U_G}\ar[dd]&&\Alg^\mathrm{nu}_{\mathbb{E}_G}(\C)\ar[dd]\ar[ll]\\\\
\Mod_G(\C)\ar@/_1.2pc/[rr]_{\Omega^n[1]}\ar@/^1.2pc/[uu]^-{\Lie}&&\Mod_G(\C)\ar@/_1.2pc/[uu]_-{\mathbb{E}^\mathrm{nu}_G}\ar[ll]_-{
\Sigma^n[-1]}
}\] 
\end{bigtheorem}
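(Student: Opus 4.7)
The plan, following the sketch in \S\ref{section:sketch}, is to construct the right adjoint forgetful functor by embedding both $\Alg^\mathrm{nu}_{\mathbb{E}_G}(\C)$ and $\Alg_\Lie(\Mod_G(\C))$ into $\infty$-categories of Lie algebras in constructible cosheaves on the absolute Ran category $\Ran_n$, equipped with two different symmetric monoidal structures that are related by a natural transformation. The left adjoint $U_G$ is then produced formally, and the bottom arrows in the diagram are matched with $\Omega^n[1]$ and $\Sigma^n[-1]$ by a stalkwise computation.

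The first, and main, task is to realize $\Alg^\mathrm{nu}_{\mathbb{E}_G}(\C)$ as a full subcategory of $\Alg_\Lie(\cShv^\cbl(\Ran_n,\C)_\amalg)$, the Lie algebras for the disjoint monoidal structure. I would do this via a chain of four equivalences. First, following \cite[5]{Lurie:HA} and \cite{AyalaFrancis:FHTM}, a nonunital $\mathbb{E}_G$-algebra can be modelled topologically as a symmetric monoidal functor out of a category of $G$-framed disjoint unions of disks. Second, passing to an appropriate nerve yields a discrete model fibered over the Ran category. Third, Day convolution---in the right-Kan-extension form, which encodes symmetric monoidal functors as cocommutative coalgebras---identifies these with \emph{factorizable} cocommutative coalgebras for the disjoint tensor product. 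Fourth, Koszul duality between Lie algebras and cocommutative coalgebras, in the form of \cite{Quillen:RHT} and \cite{FrancisGaitsgory:CKD}, promotes this to the desired Lie model. The crucial input to this last step, compensating for the absence of connectivity hypotheses on $\C$, is the symmetric monoidal pro-nilpotence of the disjoint structure.

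Next, I would embed $\Alg_\Lie(\Mod_G(\C))$ into $\Alg_\Lie(\cShv^\cbl(\Ran_n,\C)_\cup)$ via a fully faithful symmetric monoidal functor from $\Mod_G(\C)$ into the overlapping monoidal structure, essentially by forming a $G$-equivariant cosheaf of tensor powers. A natural comparison relating $\otimes^\cup$ and $\otimes^\amalg$ induces a restriction functor
\[
\Alg_\Lie(\cShv^\cbl(\Ran_n,\C)_\amalg)\to \Alg_\Lie(\cShv^\cbl(\Ran_n,\C)_\cup),
\]
and composing yields the candidate forgetful functor $\Alg^\mathrm{nu}_{\mathbb{E}_G}(\C)\to \Alg_\Lie(\Mod_G(\C))$, once one verifies that the image lies in this last full subcategory. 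The left adjoint $U_G$ is then furnished by the adjoint functor theorem, using presentability of both sides together with accessibility and limit-preservation of the forgetful functor. Commutativity of the diagram and identification of the bottom horizontal arrows as $\Omega^n[1]$ and $\Sigma^n[-1]$ are checked by tracing the composite on underlying objects: $C^\Lie$ supplies a suspension via the bar construction, while passage to the overlapping side---and in particular the behavior near the diagonal in $\Ran_n$---contributes the remaining loop shift together with the $G$-action coming from the tangent fibre at a point.

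The principal obstacle will be the pro-nilpotence step and, with it, the intrinsic characterization of those Lie algebras whose Koszul dual cocommutative coalgebras are factorizable: it is this, rather than a formal Koszul adjunction, that upgrades step (iv) to a genuine equivalence and thereby allows the entire argument to proceed without any connectivity assumption on $\C$. A secondary but nontrivial task is tracking the $G$-equivariant structure coherently through the intermediate models, so that the composite really lands in $\Alg_\Lie(\Mod_G(\C))$ with the expected underlying object $\Omega^n[1]$ of a given $\mathbb{E}_G$-algebra.
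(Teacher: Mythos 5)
Your proposal follows the paper's argument closely: the four-stage chain of equivalences (topological $\to$ discrete $\to$ cocommutative via right Day convolution $\to$ Lie via Koszul duality under pro-nilpotence) is exactly what establishes Corollary \ref{cor:lie model}, and the second embedding via Proposition \ref{prop:overlapping lie} then lets the forgetful functor be produced as the restriction along the lax monoidal identity for $\otimes^\cup\to\otimes^\amalg$, with $U_G$ supplied by the adjoint functor theorem and the bottom arrows traced out on underlying objects. One small imprecision: the embedding $\Alg_\Lie(\Mod_G(\C))\hookrightarrow\Alg_\Lie(\cShv^\cbl(\Ran_G,\C)_\cup)$ is not obtained by ``forming a $G$-equivariant cosheaf of tensor powers'' but rather by composing $\Omega^n$ with the Verdier duality identification $\Mod_G(\C)\simeq\cShv^\loc(\Diag_G,\C)$ of Lemma \ref{lem:verdier} and then $\delta_!$, the symmetric monoidality of the adjoint $\delta^*$ being the content of the base change Lemma \ref{lem:base change}.
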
 

The proof will require a significant amount of preparatory work and is completed below in \S\ref{sec:enveloping algebras}. The first reduction is to replace $\mathbb{E}_G$ with a discrete model. 

\begin{definition}
We define three categories. 
\begin{enumerate}
\item $\Mfld_G$ is the ordinary category with the same objects and morphisms as $\MMfld_G$.
\item $\D_G\subseteq \Mfld_G$ is the subcategory with objects the $G$-framed manifolds homeomorphic to $\amalg_k\mathbb{R}^n$ for some $k\geq0$ and morphisms the $G$-framed embeddings that surject on $\pi_0$.
\item $\Euc_G\subseteq \Mfld_G$ is the full subcategory spanned by the $G$-framed manifolds homeomorphic to $\mathbb{R}^n$.
\end{enumerate}
\end{definition}

Both $\Mfld_G$ and $\D_G$ are symmetric monoidal under disjoint union.

\begin{definition}
Let $F:\D_G\to \C$ be a functor. 
\begin{enumerate}
\item We say that $F$ is \emph{locally constant} if $F$ sends isotopy equivalences to equivalences in $\C$.
\item We say that $F$ is \emph{reduced} if $F(\varnothing)\simeq0$. 
\end{enumerate}
\end{definition}

The definition of local constancy extends in the obvious way to functors $F:\Euc_G\to \C$. Note that a morphism in $\D_G$ is an isotopy equivalence if and only if it is a $\pi_0$-bijection; in particular, every morphism in $\Euc_G$ is an isotopy equivalence.

The following result asserts that we may work with the discrete category $\D_G$ without loss of information. This result is essentially \cite[5.4.5.9]{Lurie:HA}, and we shall employ the language of that reference for the duration of its proof.

\begin{proposition}\label{prop:discrete model}
Let $\C$ be a symmetric monoidal $\infty$-category. There is a commuting diagram \[\xymatrix{\Alg_{\mathbb{E}_G}^\mathrm{nu}(\C)\ar[d]\ar[r]&\Fun^\otimes(\D_G,\C)\ar[d]\\
\Mod_G(\C)\ar[r]&\Fun(\Euc_G,\C)
}\] in which both horizontal arrows are fully faithful with essential image the locally constant functors.
\end{proposition}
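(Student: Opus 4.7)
The plan is to follow the strategy of \cite[5.4.5.9]{Lurie:HA}, which establishes the analogous statement when $G$ is trivial, and to adapt it to the equivariant setting. The key idea is that both horizontal arrows arise as restrictions along canonical comparison functors from the discrete categories to their natural topological enhancements inside $\MMfld_G$, and that these enhancements are obtained by inverting isotopy equivalences.

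For the bottom row, I would first observe that the space of unary operations of $\mathbb{E}_G$ is $\Emb^G(\R^n,\R^n)\simeq G$, so that $\Mod_G(\C)\simeq\Fun(\EEuc_G,\C)$, where $\EEuc_G$ denotes the topological enhancement of $\Euc_G$ as a full subcategory of $\MMfld_G$. The canonical functor $\Euc_G\to\EEuc_G$ induces a restriction $\Fun(\EEuc_G,\C)\to\Fun(\Euc_G,\C)$, which is (by construction) the bottom arrow. Full faithfulness and the identification of the essential image with locally constant functors reduce to the claim that $\EEuc_G$ realizes the $\infty$-categorical localization of $\Euc_G$ at isotopy equivalences; since every morphism of $\Euc_G$ is an isotopy equivalence and every morphism space of $\EEuc_G$ is accessible from isotopy classes by inverting isotopy paths, this is a direct verification.

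For the top row, I would realize the symmetric monoidal envelope of the nonunital operad $\mathbb{E}_G$ as the subcategory $\MMfld_G^{\mathrm{disj},\mathrm{nu}}\subseteq\MMfld_G$ spanned by finite disjoint unions of Euclidean spaces, with $\pi_0$-surjective $G$-framed embeddings as morphisms. By the universal property of the envelope, $\Alg_{\mathbb{E}_G}^{\mathrm{nu}}(\C)\simeq\Fun^{\otimes}(\MMfld_G^{\mathrm{disj},\mathrm{nu}},\C)$, and the top arrow is restriction along the inclusion $\D_G\hookrightarrow\MMfld_G^{\mathrm{disj},\mathrm{nu}}$. Full faithfulness with essential image the locally constant symmetric monoidal functors amounts to showing that $\MMfld_G^{\mathrm{disj},\mathrm{nu}}$ is the symmetric monoidal $\infty$-categorical localization of $\D_G$ at isotopy equivalences.

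The main obstacle is this symmetric monoidal localization claim. Following Lurie, I would exploit a product decomposition of the embedding spaces: for a $\pi_0$-surjective $f\colon\{1,\ldots,k\}\to\{1,\ldots,l\}$, the component of $\Emb^G(\amalg_k\R^n,\amalg_l\R^n)$ over $f$ factors as $\prod_{j}\Emb^G(\amalg_{f^{-1}(j)}\R^n,\R^n)$, compatibly with composition and the symmetric monoidal structure. This reduces the symmetric monoidal comparison to the single-target case, where Kister--Mazur-type arguments identify $\Emb^G(\amalg_k\R^n,\R^n)\simeq\Conf_k(\R^n)\times G^k$ up to homotopy. The configuration factor is captured discretely by isotopy classes of disjoint embeddings in $\D_G$, while the $G^k$ factor is recovered by iterated application of the unary localization established for the bottom row. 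Assembling these reductions, one sees that the symmetric monoidal localization of $\D_G$ receives a symmetric monoidal equivalence from $\MMfld_G^{\mathrm{disj},\mathrm{nu}}$. Commutativity of the square is then immediate, since both vertical arrows are induced by restriction to the unary part.
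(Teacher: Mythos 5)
Your proposal takes a genuinely different route from the paper's proof, although the underlying technical content is ultimately the same. The paper works at the level of $\infty$-operads: it identifies $\D_G$ with the symmetric monoidal envelope of the discrete operad $(\mathbb{E}_G^\delta)_{\mathrm{nu}}$, invokes the approximation theorem \cite[2.3.3.23]{Lurie:HA} to reduce the claim to showing that $\mathbb{E}_G^\delta\to\mathbb{E}_G$ is an approximation of $\infty$-operads, and finally outsources that verification to the argument of \cite[5.4.5.11]{Lurie:HA}. You instead work throughout at the level of symmetric monoidal envelopes, set up the top arrow as restriction along $\D_G\hookrightarrow\MMfld_G^{\mathrm{disj},\mathrm{nu}}$, and aim to show directly that this inclusion exhibits the target as the symmetric monoidal localization of the source at isotopy equivalences. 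The advantage of the paper's framing is its economy: Lurie's approximation machinery packages exactly the kind of reduction you spell out, so the paper only needs to cite the right statements. The advantage of your framing is conceptual transparency: it makes explicit that the content is a localization statement about embedding spaces, not something specific to operads.

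One caution about your sketch. The product decomposition over a $\pi_0$-surjection and the identification $\Emb^G(\amalg_k\mathbb{R}^n,\mathbb{R}^n)\simeq\Conf_k(\mathbb{R}^n)\times G^k$ are correct and are indeed the same decomposition underlying Lurie's argument. But when you say that ``the configuration factor is captured discretely by isotopy classes of disjoint embeddings'' and that ``the $G^k$ factor is recovered by iterated application of the unary localization,'' you are eliding the real issue. What must be shown is that inverting isotopy equivalences in the \emph{discrete} hom-set $\Hom_{\D_G}(\amalg_k\mathbb{R}^n,\mathbb{R}^n)$, viewed inside the $\infty$-categorical localization of $\D_G$, recovers the full weak homotopy type of $\Emb^G(\amalg_k\mathbb{R}^n,\mathbb{R}^n)$, not merely its $\pi_0$; and the multi-ary localization does not formally reduce to iterating the unary one, because the $\Conf_k$ factor carries nontrivial higher homotopy. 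Making this precise is exactly the content of \cite[5.4.5.11]{Lurie:HA} (a nerve/localization argument comparing topological and discrete categories of disks), so your proof is not really ``more elementary'' than the paper's -- it ends up needing the same nontrivial input, just unpackaged from the approximation formalism. As written, that step is a gap; with a citation to or a reproduction of the argument of \cite[5.4.5.11]{Lurie:HA}, the proposal would be complete.
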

\begin{proof}
Letting $\EEuc_G\subseteq \MMfld_G$ denote the full subcategory spanned by the objects of $\Euc_G$, the functor $\Euc_G\to \EEuc_G$ is localization at the set of isotopy equivalences. Combining this fact with the equivalence $\EEuc_G\simeq BG$ of \cite[2.8]{AyalaFrancis:FHTM} supplies the bottom functor, its fully faithfulness, and the identification of the essential image.

We turn now to the top equivalence. Letting $\mathbb{E}^\delta_G$ denote the endomorphism operad of $\mathbb{R}^n$ in $\Mfld_G$, we note that $\D_G$ is equivalent to the symmetric monoidal envelope (see \cite[2.2.4]{Lurie:HA}) of the $\infty$-operad $(\mathbb{E}^{\delta}_G)_{\mathrm{nu}}$ controlling nonunital $\mathbb{E}_G^\delta$-algebras; thus, in light of the equivalences $B\Euc_G\simeq BG\simeq (\mathbb{E}_G)^\otimes_{\langle 1\rangle}$, it will suffice by \cite[2.3.3.23]{Lurie:HA} to verify that $\mathbb{E}^\delta_G\to \mathbb{E}_G$ is an approximation of $\infty$-operads in the sense of \cite[2.3.3]{Lurie:HA}, which follows by the argument of \cite[5.4.5.11]{Lurie:HA}.
\end{proof}

\subsection{Globalization} We turn now to the relation of these notions to the larger category $\Mfld_G$. We make the following obvious generalization:

\begin{definition}
We say that a functor $F:\Mfld_G\to \C$ is \emph{locally constant} if $F$ sends isotopy equivalences to equivalences in $\C$. 
\end{definition}

We write $\Fun^\loc(\Mfld_G,\C)\subseteq \Fun(\Mfld_G,\C)$ for the full subcategory spanned by the locally constant functors. Denoting the inclusion by $\jmath:\D_G\to \Mfld_G$, then it is obvious that $\jmath^\natural F$ is locally constant whenever $F$ is. Less obviously, we have the following result, whose proof is deferred momentarily for the sake of continuity.

\begin{lemma}\label{lem:pushforward local constancy}
If $F:\D_G\to \C$ is locally constant, then so is $\jmath_!F$. 
\end{lemma}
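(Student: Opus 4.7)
To prove local constancy of $\jmath_!F$, I must verify that any isotopy equivalence $f:M\to N$ in $\Mfld_G$ induces an equivalence $(\jmath_!F)(M)\to(\jmath_!F)(N)$ in $\C$. By the pointwise formula for left Kan extension, the map in question is
\[
\colim_{(U,\phi)\in\jmath\downarrow M}F(U)\;\longrightarrow\;\colim_{(V,\psi)\in\jmath\downarrow N}F(V),
\]
induced by post-composition with $f$; the task is to show this is an equivalence.

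The plan is to use local constancy of $F$ to replace both colimits by $\infty$-categorical colimits over topological slices in which $f$ becomes literally invertible. Write $\D_G^{\mathrm{top}}$ for the $\infty$-subcategory of $\MMfld_G$ with the same objects as $\D_G$ and with mapping spaces the components of $\Emb^G(U,V)$ corresponding to $\pi_0$-surjective embeddings. Running the approximation-of-$\infty$-operads argument from the proof of Proposition \ref{prop:discrete model}, the comparison $\D_G\to\D_G^{\mathrm{top}}$ is a localization at isotopy equivalences, so the locally constant $F$ factors canonically as $\widetilde F\circ(\D_G\to\D_G^{\mathrm{top}})$. I then compare the ordinary $1$-categorical colimit over $\jmath\downarrow M$ with the $\infty$-categorical colimit of $\widetilde F$ over the topological slice $\D_G^{\mathrm{top}}\downarrow M$ formed in $\MMfld_G$. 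Granting this comparison, the conclusion is immediate: $f$ is an equivalence in $\MMfld_G$, so post-composition gives an equivalence $\D_G^{\mathrm{top}}\downarrow M\simeq\D_G^{\mathrm{top}}\downarrow N$ of $\infty$-categories and hence an equivalence of $\widetilde F$-colimits.

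The main obstacle is the slice comparison itself, namely that $\jmath\downarrow M\to\D_G^{\mathrm{top}}\downarrow M$ is a localization at the morphisms whose underlying map in $\D_G$ is an isotopy equivalence. The cleanest route is a relative version of Proposition \ref{prop:discrete model}: both comma categories arise as symmetric monoidal envelopes of $\infty$-operads associated with $M$, and the operadic approximation $\mathbb{E}_G^\delta\to\mathbb{E}_G$ carries through in the presence of $M$ because its essential input---contractibility of spaces of rectilinear embeddings into a fixed chart---is preserved by slicing. A more hands-on alternative is a direct cofinality argument driven by isotopy extension: for any finite simplex of embeddings of a disjoint union of Euclidean spaces into $M$ representing a cell of $\D_G^{\mathrm{top}}\downarrow M$, the compact trace is contained in a $G$-framed embedding $W\hookrightarrow M$ with $W$ itself a disjoint union of Euclidean spaces, through which all the vertex embeddings factor compatibly as morphisms of $\D_G$; this supplies the contractible choices needed to model the $\infty$-colimit by the $1$-colimit and thereby to conclude.
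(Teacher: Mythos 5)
Your approach is genuinely different from the paper's, and it hides a gap at exactly the place the paper works hardest to avoid. The paper factors $\jmath$ as $\D_G\xrightarrow{\jmath_1}\Disk_G\xrightarrow{\jmath_2}\Mfld_G$, where $\Disk_G$ is the \emph{full} subcategory of $\Mfld_G$ on disjoint unions of Euclidean spaces (no $\pi_0$-surjectivity constraint). The point of this factorization is that the only localization result on record, \cite[2.19]{AyalaFrancis:FHTM}, concerns $\Disk_{G/M}\to\DDisk_{G/M}$, i.e., the version \emph{without} the $\pi_0$-surjectivity constraint; the first step $(\jmath_1)_!$ disposes of the constraint combinatorially via the discrete formula $(\jmath_1)_!F(U_I)\simeq\bigoplus_{S\subseteq I}F(U_S)$, and only then is the topological input invoked.

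You instead assert directly that $\jmath\downarrow M\to\D_G^{\mathrm{top}}\downarrow M$ is a localization at isotopy equivalences, but this claim is not the one proved in Ayala--Francis, and neither of your two suggested routes to it holds up as stated. The ``cleanest route'' identifies the comma categories as symmetric monoidal envelopes of $\infty$-operads; this is not correct---the envelope of $(\mathbb{E}_G^\delta)_{\mathrm{nu}}$ is $\D_G$ itself, not a slice over a fixed manifold $M$, and the approximation argument in Proposition \ref{prop:discrete model} is an absolute statement that does not formally descend to comma categories. The ``hands-on'' isotopy-extension sketch is closer in spirit to what Ayala--Francis actually do, but the sentence ``through which all the vertex embeddings factor compatibly as morphisms of $\D_G$'' is precisely where the $\pi_0$-surjectivity constraint reasserts itself: for the factorization $U_{I_v}\to W$ to be a morphism of $\D_G$ it must surject on components, and so you must verify that the chosen $W$ never has more components than any $U_{I_v}$ appearing in the simplex, which requires an analysis of how components merge along a chain and along its isotopies. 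This is not automatic, it is not in the cited literature, and it is exactly the bookkeeping that the paper's detour through $\Disk_G$ is designed to eliminate. As written, the proposal therefore leaves its central lemma unproved; filling it in would amount to reproving a refined, $\pi_0$-surjective variant of \cite[2.19]{AyalaFrancis:FHTM} from scratch.
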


Thus, the $(\jmath_!,\jmath^\natural)$-adjunction descends to an adjunction between subcategories of locally constant functors. This adjunction also interacts well with the symmetric monoidal structures at hand; indeed, \cite[2.15]{AyalaFrancis:FHTM} implies that the $(\jmath_!,\jmath^\natural)$-adjunction lifts to an adjunction between $\infty$-categories of symmetric monoidal functors, i.e., that we have a diagram

\[\xymatrix{\Fun^\otimes(\D_G,\C)
\ar[d]\ar@/^1pc/[r]&\Fun^\otimes(\Mfld_G,\C)\ar[l]\ar[d]\\
\Fun(\D_G,\C)\ar@/^1pc/[r]^-{\jmath_!}&\Fun(\Mfld_G,\C)\ar[l]^-{\jmath^\natural}}\] in which both horizontal pairs are adjoint pairs and both square diagrams commute. 

\begin{definition}
Let $A$ be a nonunital $\mathbb{E}_G$-algebra in $\C$. \emph{Factorization homology with coefficients in $A$}, denoted $\int A$, is the value on $A$ of the composite \[\xymatrix{\Alg_{\mathbb{E}_G}^\mathrm{nu}(\C)\ar@{^{(}->}[r]^-{(\ref{prop:discrete model})}&\Fun^\otimes(\D_G,\C)\ar[r]^-{\jmath_!}& \Fun^\otimes(\Mfld_G,\C).}\]
\end{definition}

Note that $\int A$ is locally constant by Lemma \ref{lem:pushforward local constancy}. Moreover, the factorization homology retains all of the information of $A$; indeed, $\int_{\mathbb{R}^n}A\simeq A\oplus 1_\C$ as nonunital $\mathbb{E}_G$-algebras.

\begin{remark}
Factorization homology of the nonunital $\mathbb{E}_G$-algebra $A$ as defined here coincides with the factorization homology of the unital $\mathbb{E}_G$-algebra $A\oplus 1_\C$ as defined in \cite{AyalaFrancis:FHTM} (essentially, this identification follows from Proposition 2.19 of that work). The reader is cautioned not to conflate this with the version of factorization homology for nonunital disk algebras defined in \cite[5.5.4]{Lurie:HA}, denoted there by $\int^{\mathrm{nu}}A$. These two objects do not coincide.
\end{remark}

\begin{proof}[Proof of Lemma \ref{lem:pushforward local constancy}]
We have a factorization of $\jmath$ as the composition \[\D_G\xrightarrow{\jmath_1} \Disk_G\xrightarrow{\jmath_2} \Mfld_G,\] where $\Disk_G\subseteq \Mfld_G$ is the full subcategory spanned by the objects of $\D_G$; that is, morphisms in $\Disk_G$ need not surject on $\pi_0$. The lemma will follow upon verifying that $(\jmath_1)_!$ and $(\jmath_2)_!$ each preserve local constancy. 

For the first claim, we note that $\jmath_{1/U_I}$ receives a final functor from the discrete category of subsets of $I$, which sends $S\subseteq I$ to the object $(U_S, U_S\to U_I)$. Therefore, we have the natural equivalence \[(\jmath_1)_!F(U_I)\simeq \bigoplus_{S\subseteq I} F(U_S),\] which implies the local constancy of $(\jmath_1)_!F$.

For the second claim, let $F':\Disk_G\to \C$ be locally constant, let $\varphi:M\to N$ be an isotopy equivalence, and consider the commuting diagram \[\xymatrix{
\Disk_{G/M}\ar[d]_-{l_M}\ar[r]^\varphi&\Disk_{G/N}\ar[d]^-{l_N}\\
\DDisk_{G/M}\ar[r]^-{\tilde\varphi}&\DDisk_{G/N},
}\] where $\DDisk_G\subseteq \MMfld_G$ is the full subcategory spanned by the objects of $\Disk_G$. Denote by $F'_M$ the composite $\Disk_{G/M}\to \Disk_G\xrightarrow{F'} \C$, and note that $\varphi^\natural F'_N\simeq F'_M$. According to \cite[2.19]{AyalaFrancis:FHTM}, the functor $l_N$ is localization at the set of isotopy equivalences, so we have a factorization $F_N'\simeq l_N^\natural F_N''$ by local constancy. Thus,
\begin{align*}
(\jmath_2)_!F'(M)&\simeq \colim F_M'\\
&\simeq \colim\varphi^\natural F_N'\\
&\simeq \colim \varphi^\natural l_N^\natural F_N''\\
&\simeq \colim l_M^\natural \tilde \varphi^\natural F_N''\\
&\simeq \colim\tilde\varphi^\natural F_N''\\
&\simeq\colim F_N''\\
&\simeq \colim l_N^\natural F_N''\\
&\simeq \colim F_N'\\
&\simeq (\jmath_2)_!F'(N),
\end{align*} where the fifth equivalence follows from the fact that $l_M$ is a localization and hence final, the sixth from the fact that $\widetilde\varphi$ is an equivalence and hence final, and the seventh from the fact that $l_N$ is a localization and hence final.
\end{proof}

\subsection{The absolute Ran category}\label{sec:Ran section}

Let $M$ be an $n$-manifold. The \emph{Ran space} of $M$ is a topological space whose underlying set is the set \[\Ran(M)=\{S\subseteq M: 0<|S|<\infty\}\] of finite subsets of $M$. Following \cite[5.5.1]{Lurie:HA}, we topologize $\Ran(M)$ as follows. Let $\D(M)$ denote the partially ordered set of nonempty open subsets $U_I\subseteq M$ with $U_i\cong\mathbb{R}^n$ and inclusions that surject on $\pi_0$. We equip $\Ran(M)$ with the topology generated by $\{S\subseteq U_I: S\cap U_i\neq \varnothing,\, i\in I\}_{U_I\in \D(M)}$. Thus, in particular, we have a functor \[\D(M)\to \mathrm{Op}(\Ran(M)).\]

\begin{remark}
In \cite[5.5.2]{Lurie:HA}, the poset $\D(M)$ is denoted $\Disj(M)^{\mathrm{nu}}$.
\end{remark}

The reader interested in further information regarding the space $\Ran(M)$ may consult \cite{Tuffley:FSSS}, \cite[3.5.1]{BeilinsonDrinfeld:CA}, \cite[5.5]{Lurie:HA}, or \cite[3.7]{AyalaFrancisTanaka:LSSS}. Except in the proof of Lemma \ref{lem:matched extension} below, the main role in what follows of the Ran space as such will be as a guiding analogy in our exploration of the structural features of a certain category, which we now define.

\begin{definition}
The ($G$-framed) \emph{absolute Ran category} is the category $\Ran_G$ specified as follows:
\begin{enumerate}
\item an object of $\Ran_G$ is a $G$-framed manifold $M$ and a submanifold $U_I\subseteq M$ with $U_i\cong\mathbb{R}^n$;
\item a morphism from $U_I\subseteq M$ to $V_J\subseteq N$ is a $G$-framed embedding $\varphi:M\to N$ such that 
\begin{enumerate}
\item $\varphi(U_I)\subseteq V_J$ and 
\item $\pi_0(\varphi|_{U_I}):I\to J$ is surjective.
\end{enumerate}
\end{enumerate}
\end{definition}

Forgetting the submanifold defines a functor $\pi:\Ran_G\to \Mfld_G$, while equipping $U_I\subseteq M$ with the $G$-framing induced by $M$ defines a functor $\epsilon:\Ran_G\to \D_G$.

\begin{remark}\label{rmk:Ran fiber}
Our choice of terminology is motivated by the observation that there is an isomorphism $\pi^{-1}(M)\cong \D(M)\amalg\{\varnothing\}$. Since $\D(M)$ is a basis for the topology of $\Ran(M)$, we think of $\Ran_G$ as something like a bundle over the moduli (category) of $G$-framed manifolds with fiber over $M$ the Ran space of $M$ (with the addition of a disjoint basepoint $\{\varnothing\}$).
\end{remark}

We shall say that a morphism $\varphi\in \Ran_G$ is an isotopy equivalence if $\epsilon(\varphi)\in\D_G$ is an isotopy equivalence. The reader is cautioned that, in this case, $\pi(\varphi)$ is typically \emph{not} an isotopy equivalence.

\begin{definition}\label{def:constructible cosheaf}
Let $F:\Ran_G\to \C$ be a functor. 
\begin{enumerate}
\item We say that $F$ is a \emph{constructible cosheaf} if $F$ sends isotopy equivalences to equivalences in $\C$.
\item We say that $F$ is \emph{reduced} if $F(\varnothing\subseteq \varnothing)\simeq0$. 
\end{enumerate}
\end{definition}

We write $\cShv^\cbl(\Ran_G,\C)\subseteq\Fun(\Ran_G,\C)$ for the full subcategory spanned by the constructible cosheaves (resp. $\cShv_0^\cbl(\Ran_G,\C)$, reduced constructible cosheaves).

\begin{remark}
If $F:\Ran_G\to \C$ is a constructible cosheaf, then, according to \cite[5.5.1.14,\,5.5.4.13]{Lurie:HA} and Remark \ref{rmk:Ran fiber}, the restriction of $F$ to the fiber $\pi^{-1}(M)\cong\D(M)\amalg\{\varnothing\}$ determines a $\C$-valued cosheaf on the topological space $\Ran(M)_+$ that is constructible with respect to the natural stratification by cardinality. Thus, the data of a reduced constructible cosheaf on $\Ran_G$ amounts to a functorial choice of constructible cosheaf on the Ran space of each $G$-manifold separately. 
\end{remark}

\begin{lemma}\label{lem:cosheaf identification}
Restriction along $\epsilon:\Ran_G\to \D_G$ induces the horizontal equivalences in the commuting diagram \[\xymatrix{
\Fun^\loc(\D_G, \C)\ar[r]^-\sim&\cShv^\cbl(\Ran_G,\C)\\
\Fun_0^\loc(\D_G, \C)\ar[u]\ar[r]^-\sim&\cShv_0^\cbl(\Ran_G,\C)\ar[u]
}\]
\end{lemma}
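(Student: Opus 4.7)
The plan is to construct an explicit inverse to $\epsilon^\natural$. Define a functor $s:\D_G\to\Ran_G$ by sending an object $U_I$ to $(U_I\subseteq U_I)$ and acting ``as the identity'' on morphisms: given $f:U_I\to V_J$ in $\D_G$, i.e., a $G$-framed embedding surjective on $\pi_0$, take $s(f)$ to be $f$ regarded as a morphism $(U_I\subseteq U_I)\to(V_J\subseteq V_J)$ in $\Ran_G$. The two conditions required to define such a morphism, namely $f(U_I)\subseteq V_J$ and surjectivity of $\pi_0(f|_{U_I})$, are both built into the definition of $\D_G$, so $s$ is well-defined. It is immediate that $\epsilon\circ s=\id_{\D_G}$. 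The functor $\epsilon$ tautologically preserves isotopy equivalences, and $s$ does as well because $\epsilon\circ s=\id$; consequently $\epsilon^\natural$ and $s^\natural$ restrict to functors between the locally constant and constructible cosheaf subcategories, and $s^\natural\circ \epsilon^\natural\simeq \id$.

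For the opposite identification, I would construct a natural transformation $\alpha:\epsilon^\natural\circ s^\natural\Rightarrow \id_{\Fun(\Ran_G,\C)}$ and verify that its components are equivalences when evaluated on a constructible cosheaf. For each object $(U_I\subseteq M)$ of $\Ran_G$, the tautological inclusion $U_I\hookrightarrow M$ is a $G$-framed embedding whose restriction to $U_I$ induces the identity on $\pi_0$; it therefore defines a morphism $\iota_{U_I\subseteq M}:(U_I\subseteq U_I)\to (U_I\subseteq M)$ in $\Ran_G$. Naturality in $(U_I\subseteq M)$ is immediate, as a morphism $\varphi$ in $\Ran_G$ is by definition a $G$-framed embedding of ambient manifolds intertwining the two inclusions. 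The key observation is that $\epsilon(\iota_{U_I\subseteq M})=\id_{U_I}$, so $\iota_{U_I\subseteq M}$ is an isotopy equivalence in $\Ran_G$; hence if $G$ is a constructible cosheaf, the component $G(\iota_{U_I\subseteq M}):\epsilon^\natural s^\natural G(U_I\subseteq M)\to G(U_I\subseteq M)$ is an equivalence, as required.

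For the reduced variants and the commutativity of the square, note that $\epsilon^\natural F(\varnothing\subseteq\varnothing)=F(\varnothing)$ and $s^\natural G(\varnothing)=G(\varnothing\subseteq\varnothing)$, so both functors carry reduced objects to reduced objects, yielding the bottom equivalence and the compatibility with the vertical inclusions. The only step of any genuine content is the isotopy-equivalence observation $\epsilon(\iota_{U_I\subseteq M})=\id_{U_I}$, which exploits the crucial asymmetry in the definition of $\Ran_G$: the surjectivity-on-$\pi_0$ condition is imposed on the \emph{restriction} to the distinguished submanifold, not on the ambient embedding. Everything else is an unwinding of definitions.
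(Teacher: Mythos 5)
Your argument is correct and amounts to essentially the same proof as the paper's: the section $s(U_I)=(U_I\subseteq U_I)$ is precisely the functor $\sigma$ that appears later in the paper, and your natural transformation $\iota_{U_I\subseteq M}:(U_I\subseteq U_I)\to(U_I\subseteq M)$ being an isotopy equivalence is exactly the content of the paper's Lemma~\ref{lem:initial object} (initiality of $(U_I\subseteq U_I,\id_{U_I})$ in $U_I\downarrow\epsilon$), which the paper uses to identify $\epsilon_*F_2(U_I)\simeq F_2(U_I\subseteq U_I)=s^\natural F_2(U_I)$ and to check the unit and counit of the $(\epsilon^\natural,\epsilon_*)$-adjunction are equivalences. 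You package the argument as an explicit inverse via the section rather than through the Kan extension adjunction, but the geometric input and the resulting inverse functor are identical.
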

\begin{proof}
Fix $F_1\in \Fun^\loc(\D_G,\C)$ and $F_2\in \cShv^\cbl(\Ran_G,\C)$. The lemma follows from the following five claims.
\begin{enumerate}
\item \emph{The restriction $\epsilon^\natural F_1$ is a constructible cosheaf.} The claim is immediate from the definitions.
\item \emph{The right Kan extension $\epsilon_*F_2$ is locally constant.} By Lemma \ref{lem:initial object} below, we have the natural equivalence $\epsilon_*F_2(U_I)\simeq F_2(U_I\subseteq U_I)$, and the claim follows by constructibility. 
\item \emph{The unit $F_1\to \epsilon_*\epsilon^\natural F_1$ is an equivalence}. The claim is immediate from Lemma \ref{lem:initial object}.
\item \emph{The counit $\epsilon^\natural\epsilon_*F_2\to F_2$ is an equivalence}. By Lemma \ref{lem:initial object}, the value of the counit at $U_I\subseteq M$ is the value of $F_2$ on the morphism $U_I\subseteq U_I\to U_I\subseteq M$, which is an equivalence by constructibility. 
\item \emph{$F_1$ is reduced if and only if $\epsilon^\natural F_1$ is reduced}. The claim is immediate from the definitions.
\end{enumerate}
\end{proof}

\begin{lemma}\label{lem:initial object}
The object $(U_I\subseteq U_I, \id_{U_I})$ is initial in $U_I\downarrow \epsilon$
\end{lemma}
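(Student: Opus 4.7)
The plan is to unwind definitions and observe that the hom-set out of $(U_I \subseteq U_I, \id_{U_I})$ in $U_I \downarrow \epsilon$ is a singleton for every target. Since $\Ran_G$ and $\D_G$ are ordinary $1$-categories, $\infty$-categorical initiality here reduces to this ordinary statement.

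First I would fix an object $(V_J \subseteq N, f \colon U_I \to V_J)$ of $U_I \downarrow \epsilon$ and consider a morphism $\varphi$ from $(U_I \subseteq U_I, \id_{U_I})$ to it. By definition, $\varphi$ is a morphism $(U_I \subseteq U_I) \to (V_J \subseteq N)$ in $\Ran_G$ satisfying $\epsilon(\varphi) = f$. Unpacking the definition of $\Ran_G$, such a $\varphi$ is a $G$-framed embedding $U_I \to N$ sending $U_I$ into $V_J$ with $\pi_0$-surjective corestriction, and $\epsilon(\varphi)$ is precisely this corestriction. Hence the equation $\epsilon(\varphi) = f$ forces $\varphi = \iota \circ f$, where $\iota \colon V_J \hookrightarrow N$ is the inclusion; this yields uniqueness. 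Existence is immediate: the composite $\iota \circ f$ has image in $V_J$ by construction, and its corestriction to $V_J$ is $f$, which is $\pi_0$-surjective because it is a morphism of $\D_G$.

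The argument is little more than bookkeeping, so there is no genuine obstacle. The only point that deserves care is to distinguish morphisms in $\Ran_G$ (embeddings of the ambient manifold) from morphisms in $\D_G$ (embeddings only of the designated submanifold), and to notice that at the source object $(U_I \subseteq U_I, \id_{U_I})$, where the ambient manifold and the submanifold coincide, the two notions degenerate. This degeneration is exactly what permits the comma datum $f$ to pin $\varphi$ down uniquely.
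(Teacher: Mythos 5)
Your argument is correct and follows the same route as the paper's: both exhibit the unique morphism out of $(U_I\subseteq U_I,\id_{U_I})$ as the composite of the given comma datum $f$ with the inclusion $V_J\hookrightarrow N$, with uniqueness forced because a $G$-framed embedding is determined by its corestriction to its image. Your explicit remark that the coincidence of ambient manifold and submanifold at the source is what collapses the $\Ran_G$-morphism data down to the $\D_G$-morphism data is exactly the right observation, and the reduction from $\infty$-categorical to $1$-categorical initiality is a sensible point to flag.
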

\begin{proof}
Given an object $V_J\subseteq M$ of $\Ran_G$ and a morphism $\varphi:U_I\to V_J$ in $\D_G$, the composite $U_I\xrightarrow{\varphi} V_J\subseteq M$ defines a morphism from $U_I\subseteq U_I$ to $V_J\subseteq M$ in $\Ran_G$ lifting $\varphi$. Since a $G$-framed embedding is determined by the restriction of its codomain to its image, this lift is unique.
\end{proof}

We obtain a functor \[\pi^!:=\epsilon^\natural\jmath^\natural:\Fun^\loc(\Mfld_G,\C)\to \cShv^\cbl(\Ran_G,\C).\] Note that $\pi^!$ is \emph{not} the restriction of $\pi^\natural$ to the subcategory of locally constant functors; indeed, $\pi^\natural F$ is almost never a constructible cosheaf. Our choice of notation is justified by the following lemma, which guarantees that the suggested adjunction \[\pi_!:\cShv^\cbl(\Ran_G,\C)\rightleftarrows \Fun^\loc(\Mfld_G,\C):\pi^!\] does in fact exist. 

\begin{lemma}
If $F:\Ran_G\to \C$ is a constructible cosheaf, then $\pi_!F$ is locally constant.
\end{lemma}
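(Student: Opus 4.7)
The plan is to reduce the claim to Lemma \ref{lem:pushforward local constancy} by way of Lemma \ref{lem:cosheaf identification}. The latter allows us to write $F \simeq \epsilon^\natural F'$ for some locally constant $F' : \D_G \to \C$, so it will suffice to produce a natural equivalence $\pi_! \epsilon^\natural F' \simeq \jmath_! F'$ of functors $\Mfld_G \to \C$; local constancy of $\pi_! F$ then follows immediately from Lemma \ref{lem:pushforward local constancy} applied to $F'$.

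To construct this equivalence, I would fix $M \in \Mfld_G$ and introduce a functor $q : (\pi/M) \to (\jmath/M)$ between the relevant slice categories, given on objects by
\[(U_I \subseteq N,\; \varphi : N \to M) \longmapsto (U_I,\; \varphi|_{U_I} : U_I \to M)\]
and on morphisms by restriction to the submanifold $U_I$. The $\pi_0$-surjectivity condition built into the definition of morphisms in $\Ran_G$ is precisely what is needed to ensure such a restriction is a morphism of $\D_G$, so $q$ is well-defined. Moreover, for any functor $G : \D_G \to \C$, the two composites
\[(\pi/M) \longrightarrow \Ran_G \xrightarrow{\epsilon^\natural G} \C \quad\text{and}\quad (\pi/M) \xrightarrow{q} (\jmath/M) \longrightarrow \D_G \xrightarrow{G} \C\]
agree tautologically, so the cofinality of $q$ would yield $\pi_! \epsilon^\natural G (M) \simeq \jmath_! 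G(M)$ by reindexing colimits, naturally in $M$.

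Cofinality will be verified by exhibiting an initial object in each undercategory $(U_I, f) \downarrow q$, for $(U_I, f : U_I \to M) \in \jmath/M$. The candidate is $(U_I \subseteq U_I,\; f)$ equipped with the identity morphism $(U_I, f) \to q(U_I \subseteq U_I, f) = (U_I, f)$. For an arbitrary object represented by $\alpha : (U_I, f) \to (U_I', \varphi|_{U_I'})$ in $\jmath/M$ over some $(U_I' \subseteq N, \varphi) \in \pi/M$, unwinding definitions shows that any morphism $h$ in $\pi/M$ from $(U_I \subseteq U_I, f)$ to $(U_I' \subseteq N, \varphi)$ satisfying $q(h) = \alpha$ must equal the composite $U_I \xrightarrow{\alpha} U_I' \hookrightarrow N$; this uniquely determined candidate does satisfy every defining condition of a morphism in $\Ran_G$. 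Initiality then yields weak contractibility of the undercategory, and hence cofinality of $q$.

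The main obstacle is the careful bookkeeping among the various $\pi_0$-surjectivity constraints built into the definitions of $\Ran_G$, $\D_G$, and their slice categories, and the attendant verification that the proposed initial object is universal. Once that combinatorial check is in hand, the lemma follows by the formal assembly of Lemmas \ref{lem:cosheaf identification} and \ref{lem:pushforward local constancy}.
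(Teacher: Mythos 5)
Your proof is correct and reaches the same conclusion by a closely related but noticeably different route. Both arguments boil down to comparing $\colim_{\pi_{/M}}$ with $\colim_{\jmath_{/M}}$, reducing to Lemma \ref{lem:pushforward local constancy} via Lemma \ref{lem:cosheaf identification}; the difference is in the comparison functor. The paper constructs the natural transformation $\jmath_! \to \pi_!\epsilon^\natural$ abstractly from the section $\sigma\colon\D_G\to\Ran_G$ and then shows the induced functor $\jmath_{/M}\to\pi_{/M}$ gives an equivalence on colimits by factoring it (up to a natural transformation inverted by the cosheaf) through the fiber $\pi^{-1}(M)$. You instead introduce the restriction functor $q\colon\pi_{/M}\to\jmath_{/M}$ going the other way and show it is final directly. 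In fact $q$ is right adjoint to the $\sigma$-induced functor, and your explicit construction of initial objects in the comma categories $(U_I,f)\downarrow q$ amounts to verifying the standard fact that right adjoints are final; this lets you bypass the paper's factorization through $\pi^{-1}(M)$ entirely. One small point to tighten: you assert the pointwise equivalences $\pi_!\epsilon^\natural F'(M)\simeq \jmath_! F'(M)$ assemble naturally in $M$; to make this precise in the $\infty$-categorical setting one should first construct a global natural transformation (e.g.\ as the paper does via the adjunction unit for $\sigma$, or via the natural transformation $\jmath\epsilon\to\pi$) and then use your finality computation to check it is an equivalence objectwise. With that addendum, the argument stands.
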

\begin{proof}
The functor $\epsilon$ admits a section $\sigma:\D_G\to \Ran_G$, given by $\sigma(U_I)=U_I\subseteq U_I$, and clearly $\pi\sigma=\jmath$. The equivalence $\epsilon_!\sigma_!\simeq 1$ yields a natural transformation $\sigma_!\to \epsilon^\natural$ by adjunction, whence a natural transformation \[\jmath_!\simeq \pi_!\sigma_!\to \pi_!\epsilon^\natural.\] Unwinding the definitions, the component of this map on the object $M$ is the map induced on colimits by the functor $\jmath_{/M}\to \pi_{/M}$ induced by $\sigma$. To show this map is an equivalence, we note that it factors as a composite of functors \[\jmath_{/M}\to \pi^{-1}(M)\to \pi_{/M},\] both of which are final. But now the proof is complete, for \[\pi_!F\overset{(\ref{lem:cosheaf identification})}{\simeq} \pi_!\epsilon^\natural \epsilon_!F\simeq \jmath_!\epsilon_!F\] is locally constant by Lemma \ref{lem:pushforward local constancy}, since $\epsilon_!F$ is locally constant by Lemma \ref{lem:cosheaf identification}.
\end{proof}

Within $\Ran(M)$ lies a closed subspace naturally identified with the manifold $M$, namely the subspace of singletons. The corresponding object in our context is the following:

\begin{definition}
The ($G$-framed) \emph{absolute diagonal} is the full subcategory $\Diag_G\subseteq \Ran_G$ spanned by the objects $U_I\subseteq M$ with $|I|=1$. 
\end{definition}

\begin{definition}
A \emph{locally constant cosheaf} (resp. sheaf) on the absolute diagonal is a functor $F:\Diag_G\to \C$ (resp. $\Diag_G^{op}$) sending every arrow to an equivalence in $\C$.
\end{definition}

We write $\cShv^\loc(\Diag_G,\C)\subseteq \Fun(\Diag_G,\C)$ and $\Shv^\loc(\Diag_G,\C)\subseteq \Fun(\Diag_G^{op},\C)$ for the full subcategories spanned by the locally constant cosheaves and the locally constant sheaves, respectively. 

\begin{remark}
The fiber over $M$ of the projection $\Diag_G\to \Mfld_G$ is the poset $\Euc(M)$ of Euclidean neighborhoods in $M$. Since $B\Euc(M)\simeq M$, we see that the data of a locally constant (co)sheaf on the absolute diagonal amounts to a functorial choice of locally constant (co)sheaf on each $G$-manifold separately. 
\end{remark}

Classically, one knows that the compactly supported sections of a sheaf $F$ assemble into a cosheaf $F_c$, and, in good situations, this correspondence between sheaves and cosheaves is an equivalence, which goes by the name of \emph{Verdier duality}. We have the following analogue in the absolute context:

\begin{lemma}\label{lem:verdier}
Restriction along $\epsilon:\Diag_G\to \Euc_G$ induces a commuting diagram of equivalences \[\xymatrix{
\Mod_G(\C)\simeq \Fun^\loc(\Euc^{op}_G,\C)\ar[d]^-\wr_-{\Omega^n}\ar[r]^-\sim& \Shv^\loc(\Diag_G,\C)\ar[d]_-\wr^-{(-)_c}\\
\Mod_G(\C)\simeq \Fun^\loc(\Euc_G,\C)\ar[r]^-\sim& \cShv^\loc(\Diag_G,\C).}\]
\end{lemma}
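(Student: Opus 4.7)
The plan is to address, in turn, the horizontal equivalences, the vertical equivalences, and the commutativity of the diagram.

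For the horizontal equivalences, I would imitate Lemmas \ref{lem:initial object} and \ref{lem:cosheaf identification}. The functor $\epsilon|_{\Diag_G}:\Diag_G\to\Euc_G$ admits a section $\sigma:U\mapsto(U\subseteq U,\id_U)$ with $\epsilon\sigma=\id_{\Euc_G}$, and for each object $(V\subseteq N)\in\Diag_G$ the inclusion $V\hookrightarrow N$ defines a morphism $\sigma\epsilon(V\subseteq N)=(V\subseteq V)\to(V\subseteq N)$ in $\Diag_G$. A direct naturality check (analogous to the one implicit in Lemma \ref{lem:initial object}) shows that these maps assemble into a natural transformation $\sigma\epsilon\Rightarrow\id_{\Diag_G}$. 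Since by definition a locally constant (co)sheaf is a functor inverting every morphism, and since every natural transformation descends to a natural equivalence between the induced functors on groupoid completions, this identifies the groupoid completions $B\Diag_G\simeq B\Euc_G$ and hence yields both horizontal equivalences at once.

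The left vertical $\Omega^n$ is an equivalence because $(\mathbb{R}^n)^+\simeq S^n$ is invertible under smash product in any stable presentably symmetric monoidal $\infty$-category, with inverse given by $\Sigma^n$ in the paper's notation; the $G$-equivariance is preserved throughout.

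For the right vertical and for commutativity, I would define $(-)_c$ so as to realize the fiberwise ``compactly supported sections'' construction suggested by the notation. Using the identification $B\Diag_G\simeq BG$ from the first step together with $G$-equivariant functoriality over $\Mfld_G$, it suffices to specify this construction at the universal object $\mathbb{R}^n$, where Spanier--Whitehead duality gives
\[\Gamma_c(\mathbb{R}^n;\underline c)\simeq\Map((\mathbb{R}^n)^+,c)=\Omega^n c,\]
with the $G$-action on $(\mathbb{R}^n)^+$ supplying exactly the twist built into the paper's $\Omega^n$. With this specification, commutativity holds by construction and $(-)_c$ is an equivalence because $\Omega^n$ is.

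The principal obstacle is upgrading this universal pointwise prescription to a globally coherent $\infty$-functor $\Shv^\loc(\Diag_G,\C)\to\cShv^\loc(\Diag_G,\C)$ that deserves to be called ``compactly supported sections.'' Since both sides are abstractly equivalent to $\Mod_G(\C)=\Fun(BG,\C)$ via the horizontal equivalences, one can simply transport $\Omega^n$ across to define $(-)_c$ by fiat; the genuine content is then to verify that this abstractly defined functor matches what would be produced by a parametrized Verdier-duality construction, which requires coherence under $G$-framed embeddings beyond what the pointwise Spanier--Whitehead calculation yields. Fortunately, the essential one-object check sketched above, combined with the fact that $\Mfld_G$-naturality is already encoded in the equivalence $B\Diag_G\simeq BG$, makes this upgrading manageable.
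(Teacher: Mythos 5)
The paper supplies no proof of this lemma, so there is nothing to compare against directly; your argument must be judged on its own. Your decomposition is the natural one and the steps are correct. For the horizontal equivalences, the section $\sigma$ together with the natural transformation $\sigma\epsilon\Rightarrow\id_{\Diag_G}$ (which one should quickly verify is natural: for $\varphi\colon (V\subseteq N)\to(V'\subseteq N')$ one checks $\iota_{V'}\circ\varphi|_V=\varphi\circ\iota_V$ as embeddings $V\to N'$, which holds because $\varphi(V)\subseteq V'$) shows that $B\epsilon$ is a homotopy equivalence, and both sides of each horizontal arrow are functor categories out of the respective groupoid completions, since $\cShv^\loc$, $\Shv^\loc$, and $\Fun^\loc(\Euc_G,-)$ all consist of functors inverting every arrow. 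The left vertical is an equivalence by stability exactly as you say. And you are right that the cleanest reading of the lemma is that $(-)_c$ is \emph{defined} by transport, since the paper never gives it an independent construction; once the other three sides of the square are equivalences, the lemma becomes automatic.

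Your closing paragraph overestimates what remains to be done. Because the lemma asserts nothing more than the existence and commutativity of the square, the transport definition discharges the whole statement, and the worry about ``upgrading the universal pointwise prescription to a globally coherent $\infty$-functor'' is moot. The Spanier--Whitehead calculation $\Gamma_c(\mathbb{R}^n;\underline{c})\simeq\Omega^n c$ is welcome as motivation for the notation $(-)_c$ and for the appearance of the twisted $\Omega^n$ rather than, say, an un-twisted shift, but it is not load-bearing. One genuinely loose end worth a sentence: the identification $\Fun^\loc(\Euc_G^{op},\C)\simeq\Mod_G(\C)$ passes through $B\Euc_G^{op}\simeq(B\Euc_G)^{op}\simeq BG$, and the last step involves inversion in $G$; you should say why the conventions' definition of $\Omega^n$ on $\Mod_G(\C)$ is compatible with the particular identifications chosen, since otherwise the label $\Omega^n$ on the left vertical is ambiguous up to this twist. (This does not affect existence or commutativity, only the named identification of the vertical functor.)
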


\subsection{Push and pull}\label{sec:push pull section}

In this section, we study various functorialities for constructible cosheaves, guided by the analogy between $\Ran_G$ and $\Ran(M)$. The pushforward and pullback functors introduced here will form the basis for our later arguments; for the sake of continuity, we defer their construction to \S\ref{sec:functoriality section} below. 

A map of fundamental importance to the Ran space of a manifold $M$ is the so-called \emph{main diagonal} \begin{align*}
M&\longrightarrow \Ran(M)\\
x&\longmapsto \{x\}.
\end{align*} The corresponding functor in the absolute context is the inclusion $\delta:\Diag_G\to \Mfld_G$. The restriction and left Kan extension adjunction corresponding to this functor induces an adjunction \[\delta_!:\cShv^\loc(\Diag_G,\C)\rightleftarrows \cShv^\cbl(\Ran_G,\C):\delta^!.\] There is also an exceptional pullback $\delta^*$ and an adjunction \[\delta^*:\cShv^\cbl(\Ran_G,\C)\rightleftarrows \cShv^\loc(\Diag_G,\C):\delta_!.\] For more on these functors, see \S\ref{sec:main diagonal}.

\begin{definition}
We say that a constructible cosheaf $F:\Ran_G\to \C$ is \emph{supported on the diagonal} if $F(U_I\subseteq M)\simeq0$ whenever $|I|\neq 1$. 
\end{definition}

According to Corollary \ref{cor:diagonal support}, the pushforward $\delta_!$ is fully faithful with essential image the constructible cosheaves supported on the diagonal.

The Ran space of a manifold $M$ is naturally a (nonunital) topological monoid under the multiplication \begin{align*}
\Ran(M)^2&\longrightarrow \Ran(M)\\
(S,T)&\longmapsto S\cup T.
\end{align*} In order to formulate the corresponding push and pull functors, we make the following obvious definition:

\begin{definition}
Let $F:\Ran_G\times_{\Mfld_G}\Ran_G\to \C$ be a functor. 
\begin{enumerate}
\item We say that $F$ is a \emph{constructible cosheaf} if $F$ sends pairs of isotopy equivalences to equivalences in $\C$.
\item We say that $F$ is \emph{reduced} if $F(\varnothing\subseteq \varnothing, -)\simeq F(-,\varnothing\subseteq \varnothing)\simeq 0$. 
\end{enumerate}
\end{definition}

Although the multiplication introduced above has no obvious incarnation as a functor $\Ran_G\times_{\Mfld_G}\Ran_G\to \Ran_G$, we nevertheless have an adjunction \[\mu_!:\cShv^\cbl(\Ran_G\times_{\Mfld_G}\Ran_G,\C)\rightleftarrows \cShv^\cbl(\Ran_G,\C):\mu^!.\] It turns out that $\mu_!$ is also the \emph{right} adjoint to $\mu^!$, a fact corresponding to the properness of the multiplication map $\Ran(M)^2\to \Ran(M)$, and we shall at times use the alternate notation $\mu_*$ when we wish to emphasize the role of this pushforward as a right adjoint. The functor $\mu_!$ obeys the formula \[\mu_!F(U_I\subseteq M)\simeq \bigoplus_{I_1\cup I_2=I}F(U_{I_1}\subseteq M, U_{I_2}\subseteq M).\] In particular, if $F$ is reduced, then so is $\mu_!F$. For more on these functors, see \S\ref{sec:matched disks}.

As for the absolute diagonal, the main piece of functoriality relevant for our purposes concerns the map $\Delta:\Diag_G\to \Diag_G\times_{\Mfld_G}\Diag_G$. With the evident extension of the notion of locally constant cosheaf to $\Diag_G\times_{\Mfld_G}\Diag_G$, we have as before that the $(\Delta_!,\Delta^\natural)$-adjunction descends to an adjunction \[\Delta_!:\cShv^\loc(\Diag_G,\C)\rightleftarrows \cShv^\loc(\Diag_G\times_{\Mfld_G}\Diag_G,\C):\Delta^!,\] and that $\Delta_!$ admits a right adjoint $\Delta^*$. For more on these functors, see \S\ref{sec:base change}.

These two pullback functors induce two tensor products on locally constant cosheaves \[F\otimes^! G=\Delta^!(F\boxtimes G)\qquad\qquad\text{and}\qquad\qquad F\otimes^* G=\Delta^*(F\boxtimes G),\] where $\boxtimes$ denotes the external tensor product. Through the equivalence of Lemma \ref{lem:verdier}, the former, pointwise tensor product corresponds to the ordinary tensor product of $G$-modules, while the latter corresponds to the tensor product \[M\otimes^*N\simeq \Omega^n(\Sigma^nM\otimes\Sigma^nN)\] for $G$-modules $M$ and $N$.

\begin{remark}
The reader is cautioned not to conflate the tensor product $\otimes^*$ with the star tensor product $\otimes^\star$ of \cite{BeilinsonDrinfeld:CA} and \cite{FrancisGaitsgory:CKD}. The analogue of the latter in our context is the \emph{overlapping tensor product} $\otimes^\cup$ (see \S\ref{sec:monoidal section}). 
\end{remark}

The functors introduced in this section interact as predicted by the topological analogy. For example, since the square \[\xymatrix{
M\ar@{=}[d]\ar[r]&M\times M\ar[r]&\Ran(M)\times\Ran(M)\ar[d]\\
M\ar[rr]&&\Ran(M)
}\] is a pullback and the maps involved are proper, we should expect a base change result to hold. A precise statement of this result, whose proof may be found in \S\ref{sec:base change}, is the following:

\begin{lemma}\label{lem:base change}
The canonical map $\delta^*\mu_!\to \Delta^*(\delta\times\delta)^*$ is an equivalence.
\end{lemma}

This observation will be a key ingredient in Proposition \ref{prop:overlapping lie} and thereby in the main theorems.

\subsection{Factorizable coalgebras}\label{sec:factorizable coalgebras} In this section, we arrive at the promised cocommutative coalgebra model for nonunital $\mathbb{E}_G$-algebras; see Proposition \ref{prop:factorizable 2} below. Prerequisitely, we introduce the following symmetric monoidal structure:

\begin{proposition}\label{prop:disjoint day}
Let $\C$ be a stable, presentably symmetric monoidal $\infty$-category. There is a symmetric monoidal structure on $\Fun(\D_G,\C)$, called \emph{right Day convolution}, with tensor product given by the formula \[\bigg(\bigotimes^\mathrm{RD}_JF_j\bigg)(U_I)\simeq \bigoplus_{\Fin(I,J)}\bigotimes_J F_j(U_{f^{-1}(j)})\] and an equivalence of $\infty$-categories \[\Fun^\mathrm{oplax}(\D_G,\C)\simeq \Coalg_\Com(\Fun(\D_G,\C))\] covering the identity on $\Fun(\D_G,\C)$.
\end{proposition}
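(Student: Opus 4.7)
The plan is to construct the right Day convolution as an operadic right Kan extension and then deduce the oplax/coalgebra equivalence by unwinding definitions. Concretely, I would define the $J$-fold tensor $\bigotimes^{\mathrm{RD}}_J F_j$ as the right Kan extension of the external product $\boxtimes_J F_j : \D_G^J \to \C$ along the disjoint union functor $\sqcup : \D_G^J \to \D_G$. The symmetric monoidal coherence data is then packaged into a structure on $\Fun(\D_G,\C)$ by dualizing the operadic construction used in Glasman's left Day convolution; this dualization being the most technical part, I would defer the homotopy coherent setup to \S\ref{sec:monoidal section} and focus the proof in this section on what is specific to $\D_G$.

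To establish the explicit formula, I would analyze the slice category $U_I \downarrow (\sqcup_J)$ indexing the right Kan extension. An object is a morphism $U_I \to \sqcup_J U_{A_j}$ in $\D_G$; since morphisms in $\D_G$ surject on $\pi_0$, such a datum is the same as a function $f : I \to J$ together with, for each $j \in J$, a $\pi_0$-surjective $G$-framed embedding $U_{f^{-1}(j)} \to U_{A_j}$. Hence the slice category splits as a coproduct indexed by $f \in \Fin(I,J)$, and within each summand the object $U_I = \sqcup_J U_{f^{-1}(j)}$ equipped with the identity is initial. The right Kan extension therefore computes as a product of $\bigotimes_J F_j(U_{f^{-1}(j)})$ indexed by $\Fin(I,J)$, which in the stable $\infty$-category $\C$ coincides with the asserted direct sum.

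The equivalence $\Fun^{\mathrm{oplax}}(\D_G,\C) \simeq \Coalg_{\Com}(\Fun(\D_G,\C))$ is then a direct dualization of the familiar identification of commutative algebras for left Day convolution with lax symmetric monoidal functors. By construction, a cocommutative coalgebra structure on $F$ consists, at each $U_I$, of a map $F(U_I) \to \bigoplus_{f : I \to J} \bigotimes_J F(U_{f^{-1}(j)})$; projecting onto the summand indexed by a decomposition $I = I_1 \sqcup \cdots \sqcup I_J$ produces precisely the oplax structure map $F(U_I) \to \bigotimes_J F(U_{I_j})$. The coassociativity, counitality, and cocommutativity diagrams translate term-by-term into the coherence data for an oplax symmetric monoidal functor, yielding an equivalence of $\infty$-categories over $\Fun(\D_G,\C)$.

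I expect the main obstacle to be the first step, namely the construction of the right Day convolution as a genuine symmetric monoidal $\infty$-category. Glasman's treatment of ordinary Day convolution exploits presentability of $\W$ to form fiberwise left Kan extensions over a cocartesian fibration; here one needs a dual framework built from limits in $\C$, and passage to opposite categories does not help because $\C^{\op}$ is not presentable. Working out this dualization carefully, which is what Section \ref{sec:monoidal section} is designed to do, is the essential technical content behind the proposition.
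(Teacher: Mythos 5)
Your computational core is correct and matches the paper's: the undercategory $(\amalg_J)_{U_I/}$ splits over $\Fin(I,J)$, each summand has an initial object $\varphi_f$, and stability of $\C$ converts the resulting finite product into the asserted direct sum (this is the content of Lemma~\ref{finite}); the oplax/coalgebra identification is likewise the expected dualization of the lax/algebra statement. Where the proposal goes wrong is the last paragraph: the paper \emph{does} pass to opposite categories, defining right Day convolution on $\Fun(\V,\W)$ as the opposite of left Day convolution on $\Fun(\V^{\op},\W^{\op})$, and the non-presentability of $\C^{\op}$ is not an obstruction. The resolution, recorded in the paper's restatement of Glasman's theorem and in Corollary~\ref{cor:right day}, is that Glasman's proof does not actually use presentability; it only uses that the tensor product of the target distributes over colimits indexed by the slices $\otimes_{/v}$. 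For $\W^{\op}=\C^{\op}$ and $\V^{\op}=\D_G^{\op}$, these slices correspond to the undercategories $(\amalg_J)_{U_I/}$ in $\D_G$, which by the initiality of $\Fin(I,J)$ reduce the relevant colimits in $\C^{\op}$ to finite limits in $\C$. Since $\C$ is stable and presentably symmetric monoidal, its tensor product distributes over finite limits (these being finite colimits), so the hypotheses of the weakened form of Glasman's theorem hold for $\C^{\op}$. In short, there is no need for a separate limit-based framework; the work lies entirely in reading Glasman's argument precisely enough to see that it applies.
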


The proof of this proposition will involve a detour through the formalism of Day convolution and is deferred to \S\ref{sec:day convolution}; however, it is worth remarking that, under the asserted equivalence, the components of the oplax structure on a functor $F$ are identified with the components of the comultiplication on the corresponding coalgebra. Since our interest, through the equivalence of Proposition \ref{prop:discrete model}, lies in the oplax functors that happen to be symmetric monoidal, it will be useful to give a name to the criterion guaranteeing that the a cocommutative coalgebra corresponds to such a functor.

\begin{definition}
We say that $A\in\Coalg_\Com(\Fun(\D_G,\C))$ is \emph{factorizable} if the composite \[A(U_I)\longrightarrow \bigotimes^\mathrm{RD}_IA(U_I)\simeq \bigoplus_{\Fin(I,I)}\bigotimes_IA(U_{f^{-1}(i)})\xrightarrow{\pi_{\id_I}} \bigotimes_I A(U_i)\] is an equivalence for every $U_I\in \D_G$. 
\end{definition}

\begin{remark}
This condition is equivalent to the apparently stronger requirement that the composite \[A(U_I)\to \bigotimes^\mathrm{RD}_JA(U_I)\simeq\bigoplus_{\Fin(I,J)}\bigotimes_JA(U_{f^{-1}(j)})\xrightarrow{\pi_f} \bigotimes_JA(U_{f^{-1}(j)})\] be an equivalence for every $U_I\in\D_G$, every $J$, and every $f:I\to J$.
\end{remark}

\begin{remark}\label{rem:factorizable unit}
If $A$ is factorizable, then $A(\varnothing)\simeq 1_\C$.
\end{remark}

Denoting by $\Fact(\Fun(\D_G,\C))\subseteq\Coalg_\Com(\Fun(\D_G,\C)_\mathrm{RD})$ the full subcategory spanned by the factorizable coalgebras, we have the following:

\begin{corollary}\label{cor:factorizable 1} The dashed equivalence exists in the commuting diagram
\[\xymatrix{\Fun^\otimes(\D_G,\C)\ar[d]\ar@{-->}[r]^-\sim&\Fact(\Fun(\D_G,\C))\ar[d]\\
\Fun^{\oplax}(\D_G,\C)\ar[r]^-\sim&\Coalg_\Com(\Fun(\D_G,\C)_\mathrm{RD}).
}\]
\end{corollary}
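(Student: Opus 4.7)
The plan is to deduce the top horizontal equivalence from the bottom one, which is provided by Proposition \ref{prop:disjoint day}, by identifying the indicated full subcategories on each side. Concretely, what must be shown is that, under the correspondence between oplax functors and cocommutative coalgebras for right Day convolution, an oplax functor $F$ is symmetric monoidal if and only if the corresponding coalgebra $A$ is factorizable.

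The first step is to unpack symmetric monoidality on the oplax side. Every object $U_I \in \D_G$ is tautologically the disjoint union $\amalg_{i \in I} U_i$ of its connected components, and more generally every $f \colon I \to J$ in $\Fin$ induces a disjoint union decomposition $U_I = \amalg_{j \in J} U_{f^{-1}(j)}$. Since the symmetric monoidal structure on $\D_G$ is disjoint union, an oplax monoidal functor $F$ is symmetric monoidal precisely when, for every such $U_I$ and $f$, the associated oplax structure map
\[F(U_I) \longrightarrow \bigotimes_{j \in J} F(U_{f^{-1}(j)})\]
is an equivalence, together with the unit condition $F(\varnothing) \simeq 1_\C$ (which is automatic on the factorizable side by Remark \ref{rem:factorizable unit}).

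The second step is to match these oplax structure maps with components of the comultiplication on $A$. As noted in the text immediately after Proposition \ref{prop:disjoint day}, the asserted equivalence identifies the components of the oplax structure on $F$ with the components of the comultiplication on $A$; explicitly, the iterated comultiplication evaluated at $U_I$ lands in $\bigotimes_J^\mathrm{RD} A (U_I) \simeq \bigoplus_{\Fin(I,J)} \bigotimes_J A(U_{f^{-1}(j)})$ by the formula of Proposition \ref{prop:disjoint day}, and projection $\pi_f$ to the $f$-indexed summand recovers the oplax structure map displayed above. Consequently, $F$ being symmetric monoidal is equivalent to the ``apparently stronger'' reformulation of factorizability noted in the Remark following the definition, which that same Remark records as equivalent to factorizability in the stated form (the case $J = I$, $f = \id_I$).

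The bulk of the content is thus the identification of oplax structure maps with comultiplication components, which is effectively packaged inside Proposition \ref{prop:disjoint day}; the remaining content of the Corollary is routine combinatorial bookkeeping over finite-set maps. I expect the main obstacle to lie not in the Corollary itself but rather in justifying the precise form of that underlying identification---in particular, verifying that right Day convolution correctly realizes the stated bijection at the level of structure maps---which is the real work carried out in the proof of Proposition \ref{prop:disjoint day}.
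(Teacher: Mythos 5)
Your proposal is correct and matches the paper's intent: the paper presents this corollary without a separate proof, treating it as the definitional unwinding of Proposition~\ref{prop:disjoint day} together with the paragraph immediately preceding the definition of factorizability (which records precisely the identification you invoke, between oplax structure maps and comultiplication components), and you have correctly reconstructed that reasoning, including the handling of the unit condition via Remark~\ref{rem:factorizable unit} (equivalently, the case $I=\varnothing$ of the factorizability condition) and the passage through the ``apparently stronger'' reformulation.
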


From the formula of Proposition \ref{prop:disjoint day}, we see that the tensor product of locally constant functors is again locally constant; therefore, through the equivalence of Lemma \ref{lem:cosheaf identification}, we obtain a tensor product on constructible cosheaves. We refer to this monoidal structure as the \emph{disjoint monoidal structure}. Where necessary, the symbol $\amalg$ will be used to disambiguate the disjoint tensor product from other tensor products (e.g. $\otimes^\amalg$).

The formula also makes it clear that the tensor product preserves the property of being reduced, so that $\Fun^\loc_0(\D_G,\C)$ and $\cShv_0^\cbl(\Ran_G,\C)$ inherit nonunital symmetric monoidal structures. We note that, for reduced functors $F_J$, we have the formula \[\bigg(\bigotimes_J^\amalg F_j\bigg)(U_I)\simeq \bigoplus_{\Surj(I,J)} \bigotimes_J F_j(U_{\pi^{-1}(j)}),\] which should be compared to the formula for the value on stalks of the chiral tensor product of \cite[\text{p. 4}]{FrancisGaitsgory:CKD}.

We shall say that a cocommutative coalgebra in $\cShv^\cbl(\Ran_G,\C)$ is factorizable if the corresponding coalgebra in $\Fun(\D_G,\C)$ is, and we shall say that a noncounital cocommutative coalgebra in $\cShv^\cbl_0(\Ran_G,\C)$ is factorizable if its counitalization is. We extend the notation $\Fact$ to these cases in the obvious way.

The main result of this section is the following:

\begin{proposition}\label{prop:factorizable 2}
There is an equivalence \[\Alg^\mathrm{nu}_{\mathbb{E}_G}(\C)\simeq \Fact(\cShv_0^\cbl(\Ran_G,\C)).\]
\end{proposition}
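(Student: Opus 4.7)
The plan is to obtain the desired equivalence as a composition of three equivalences already established in the paper, capped with a comparison between counital and noncounital structures.

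First I would invoke Proposition \ref{prop:discrete model}, which identifies $\Alg^\mathrm{nu}_{\mathbb{E}_G}(\C)$ with the full subcategory $\Fun^{\otimes, \loc}(\D_G, \C) \subseteq \Fun^\otimes(\D_G, \C)$ of locally constant symmetric monoidal functors. Next, by Corollary \ref{cor:factorizable 1}, symmetric monoidal functors on $\D_G$ correspond to factorizable cocommutative coalgebras for right Day convolution. The explicit formula of Proposition \ref{prop:disjoint day} makes it clear that right Day convolution of locally constant functors is again locally constant, so the equivalence of Corollary \ref{cor:factorizable 1} restricts to an equivalence $\Fun^{\otimes, \loc}(\D_G, \C) \simeq \Fact(\Fun^\loc(\D_G, \C))$, where the right-hand side consists of factorizable coalgebras whose underlying functor is locally constant.

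Then I would transport along the equivalence $\Fun^\loc(\D_G, \C) \simeq \cShv^\cbl(\Ran_G, \C)$ of Lemma \ref{lem:cosheaf identification}. By construction, the disjoint monoidal structure on $\cShv^\cbl(\Ran_G, \C)$ is defined so as to make this equivalence symmetric monoidal, and the paper's definition of factorizability in $\cShv^\cbl(\Ran_G, \C)$ is transported through this equivalence as well. Thus we obtain $\Fact(\Fun^\loc(\D_G, \C)) \simeq \Fact(\cShv^\cbl(\Ran_G, \C))$.

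The final step is to compare counital coalgebras in $\cShv^\cbl(\Ran_G, \C)$ with noncounital coalgebras in the reduced subcategory $\cShv_0^\cbl(\Ran_G, \C)$. By Remark \ref{rem:factorizable unit}, any factorizable $A$ satisfies $A(\varnothing \subseteq \varnothing) \simeq 1_\C$, which matches the value on the empty object of the unit $J$ of the disjoint monoidal structure. In the stable setting, the counit $A \to J$ therefore admits a canonical splitting, giving a direct-sum decomposition $A \simeq A_0 \oplus J$ in which $A_0$ is reduced and inherits the structure of a noncounital cocommutative coalgebra in $\cShv_0^\cbl(\Ran_G, \C)$; conversely, counitalization $A_0 \mapsto A_0 \oplus J$ provides the inverse. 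Since factorizability of a noncounital coalgebra is defined in the paper exactly as factorizability of its counitalization, this yields the desired final equivalence.

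The hard part will be this last step: one must verify that the stable splitting is functorial and compatible with the comultiplication, which amounts to the standard equivalence between coaugmented structures in a pointed stable symmetric monoidal setting and their noncounital counterparts in the nonunital version of that setting. The earlier steps are essentially bookkeeping on top of already-established equivalences.
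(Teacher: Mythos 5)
Your first three steps track the paper's argument exactly: by Proposition \ref{prop:discrete model}, Corollary \ref{cor:factorizable 1} restricted to locally constant functors, and Lemma \ref{lem:cosheaf identification}, one indeed obtains $\Alg^\mathrm{nu}_{\mathbb{E}_G}(\C)\simeq\Fact(\Fun^\loc(\D_G,\C))$ transported to constructible cosheaves. The issue lies in the final step.

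You claim that since $A(\varnothing)\simeq 1_\C$ by Remark \ref{rem:factorizable unit}, ``in the stable setting the counit $A\to 1_{\mathrm{RD}}$ therefore admits a canonical splitting.'' This is not a valid inference: stability lets you split a cofiber sequence once a section has been produced, but $A(\varnothing)\simeq 1_\C$ at the level of objects does not by itself furnish a coaugmentation $1_{\mathrm{RD}}\to A$ in $\Coalg_\Com(\Fun^\loc(\D_G,\C)_{\mathrm{RD}})$. One can, at the homotopy-category level, show that the pointwise inverse of the counit is compatible with the comultiplication on $\pi_0$, but promoting this to a coherent map of $\infty$-coalgebras requires an argument you haven't supplied, and ``standard equivalence between coaugmented and noncounital'' cannot be invoked until the coaugmentation has been exhibited. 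In other words, the passage from ``counital factorizable'' to ``coaugmented factorizable'' is exactly the point at issue, and it is not free.

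The paper closes this gap with a different and cleaner observation: the unit $1_{\mathrm{RD}}$ is \emph{initial} in $\Fact(\Fun^\loc(\D_G,\C))$, because under the equivalence with $\Alg^\mathrm{nu}_{\mathbb{E}_G}(\C)$ it corresponds to the zero algebra, which is initial in the stable setting. Initiality hands you the coaugmentation $1_{\mathrm{RD}}\to A$ canonically as a map of coalgebras, after which the equivalence between coaugmented and noncounital coalgebras applies; essential surjectivity is then checked by verifying that the coaugmentation coideal of a factorizable coalgebra is reduced and factorizable, again via Remark \ref{rem:factorizable unit}. You should replace the stability argument with this initiality observation.
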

\begin{proof}
From the definitions, we have the dashed factorization in the diagram
\[\xymatrix{\Coalg_\Com^\mathrm{nu}(\Fun^\loc_0(\D_G,\C)_\mathrm{RD})\ar@{^{(}->}[r]&\Coalg_\Com^\mathrm{nu}(\Fun^\loc(\D_G,\C)_\mathrm{RD})\ar[r]^-{(-)\oplus 1_{\mathrm{RD}}}&\Coalg_\Com(\Fun^\loc(\D_G,\C)_\mathrm{RD})\\
\Coalg^\mathrm{nu}_\Com(\cShv_0^\cbl(\Ran_G,\C)_\amalg)\ar[u]^-\wr&\Fact(\cShv_0^\cbl(\Ran_G,\C))\ar@{_{(}->}[l]\ar@{-->}[r]&\Fact(\Fun^\loc(\D_G,\C)),\ar[u]
}\] where \[1_{\mathrm{RD}}(U_I)\simeq\begin{cases}
1_\C&\quad I=\varnothing\\
0&\quad\text{else}
\end{cases}\] is the unit for $\otimes^{\mathrm{RD}}$. It will suffice to show that this functor is an equivalence, since we have the equivalence $\Alg_{\mathbb{E}_G}(\C)\simeq \Fact(\Fun^\loc(\D_G,\C))$ by imposing local constancy on Corollary \ref{cor:factorizable 1} and invoking Proposition \ref{prop:discrete model}.

Now, $1_{\mathrm{RD}}$ is factorizable and, in fact, initial in $\Fact(\Fun^\loc(\D_G,\C))$; indeed, it is the factorizable coalgebra corresponding to the trivial algebra $0\in\Alg_{\mathbb{E}_G}^\mathrm{nu}(\C)$, which is initial. Thus, we obtain a fully faithful factorization of the inclusion as indicated in the following diagram: \[\xymatrix{
\Fact(\Fun^\loc(\D_G,\C))\ar@{^{(}-->}[d]\ar@{^{(}->}[r]&\Coalg_\Com(\Fun^\loc(\D_G,\C))\\
\Coalg^\mathrm{coaug}_\Com(\Fun^\loc(\D_G,\C)).\ar[ur]
}\] Since adjoining the counit induces an equivalence between noncounital and coaugmented coalgebras, this implies that the dashed functor is fully faithful. To see that it is also essentially surjective, it suffices to note that the coaugmentation coideal of $A$ is reduced and factorizable by Remark \ref{rem:factorizable unit}.
\end{proof}

Now, the argument given below in \S\ref{sec:day convolution} in the case of $\D_G$ shows that right Day convolution exists on $\Fun(\Mfld_G,\C)$ and obeys the same formula. Because the functor $\jmath$ is symmetric monoidal, $\jmath^\natural$ obtains an oplax structure for right Day convolution, and, from the explicit formula \begin{align*}\jmath^\natural(F\otimes^{\mathrm{RD}}G)(U_I)&=(F\otimes^{\mathrm{RD}}G)(\jmath(U_I))\\&\simeq \bigoplus_{I_1\sqcup I_2=I, I_j\neq\varnothing}F(\jmath(U_{I_1}))\otimes G(\jmath(U_{I_2}))\\&=\bigoplus_{I_1\sqcup I_2=I, I_j\neq\varnothing}\jmath^\natural F(U_{I_1})\otimes \jmath^\natural G(U_{I_2})\\&\simeq (\jmath^\natural F\otimes^\mathrm{RD}\jmath^\natural G)(U_I), \end{align*} we see that this oplax monoidal structure is in fact strong monoidal. Thus, as the left adjoint of a strong monoidal functor, $\jmath_!$ inherits an oplax structure, and the $(\jmath_!,\jmath^\natural)$-adjunction lifts to an adjunction between the respective $\infty$-categories of cocommutative coalgebras, which we abusively indicate by the same symbols.

\begin{lemma}\label{lem:preserves factorizability}
If $A\in \Coalg_\Com(\Fun(\D_G,\C))$ is factorizable, then so is $\jmath_!A$.
\end{lemma}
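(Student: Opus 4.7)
The plan is to translate factorizability into strong symmetric monoidality via Corollary \ref{cor:factorizable 1}, apply the already-cited preservation of strong symmetric monoidality under $\jmath_{!}$, and then translate back. The key point is that we have \emph{two} manifestations of $\jmath_{!}$ as a left adjoint to $\jmath^{\natural}$ — one on symmetric monoidal functors and one on cocommutative coalgebras — and uniqueness of adjoints forces them to agree.

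First, I would observe that the argument establishing Corollary \ref{cor:factorizable 1} applies verbatim with $\Mfld_{G}$ in place of $\D_{G}$, since the text already notes that right Day convolution exists on $\Fun(\Mfld_{G},\C)$ and obeys the same formula. This yields an equivalence $\Fact(\Fun(\Mfld_{G},\C)) \simeq \Fun^{\otimes}(\Mfld_{G},\C)$ fitting into a commuting diagram with the equivalence $\Fun^{\mathrm{oplax}}(\Mfld_{G},\C) \simeq \Coalg_{\Com}(\Fun(\Mfld_{G},\C)_{\mathrm{RD}})$.

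Now, by Corollary \ref{cor:factorizable 1}, the factorizable coalgebra $A$ corresponds to a strong symmetric monoidal functor $\bar{A} \in \Fun^{\otimes}(\D_{G},\C)$. The text records that \cite[2.15]{AyalaFrancis:FHTM} lifts the $(\jmath_{!},\jmath^{\natural})$-adjunction to an adjunction between $\infty$-categories of strong symmetric monoidal functors, so $\jmath_{!}\bar{A} \in \Fun^{\otimes}(\Mfld_{G},\C)$. Under the extended equivalence from the previous paragraph, $\jmath_{!}\bar{A}$ corresponds to a factorizable coalgebra in $\Fun(\Mfld_{G},\C)$, which I claim is precisely $\jmath_{!}A$.

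To identify these two coalgebras, one invokes uniqueness of left adjoints: both the coalgebra-level $\jmath_{!}$ (coming from the fact that $\jmath^{\natural}$ is strong monoidal for right Day convolution, so its left adjoint is oplax monoidal and lifts to coalgebras) and the symmetric-monoidal-functor-level $\jmath_{!}$ are left adjoint to $\jmath^{\natural}$, and the equivalences $\Fact \simeq \Fun^{\otimes}$ cover the identity on underlying functor categories. Hence the two left adjoints agree under the equivalences, and $\jmath_{!}A$ is factorizable. The main technical obstacle is setting up this identification rigorously, i.e., checking that the monoidal structures and adjunctions are coherently compatible across the chain of equivalences. As a sanity check and alternative route, the formula $\jmath_{!}A(V_{I}) \simeq \bigoplus_{S \subseteq I} A(V_{S})$ from the proof of Lemma \ref{lem:pushforward local constancy}, combined with factorizability giving $A(V_{S}) \simeq \bigotimes_{i \in S} A(V_{i})$ and $A(\varnothing) \simeq 1_{\C}$ (Remark \ref{rem:factorizable unit}), yields $\jmath_{!}A(V_{I}) \simeq \bigotimes_{i \in I}\bigl(1_{\C} \oplus A(V_{i})\bigr) \simeq \bigotimes_{i \in I}\jmath_{!}A(V_{i})$, agreeing on the level of underlying objects with the factorizability condition.
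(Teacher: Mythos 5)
Your main route has a circularity. You want to identify two things called $\jmath_{!}$: the coalgebra-level left adjoint (to $\jmath^{\natural}$ on $\Coalg_{\Com}$, obtained from the oplax structure on the Kan extension) and the symmetric-monoidal-functor-level left adjoint from \cite[2.15]{AyalaFrancis:FHTM} (to $\jmath^{\natural}$ on $\Fun^{\otimes}$). But these are adjoints to \emph{different} functors: one is restriction on all of $\Coalg_{\Com}$, the other is restriction on the full subcategory $\Fun^{\otimes}\simeq \Fact$. Uniqueness of adjoints does not apply across a full subcategory inclusion; what would make it apply is precisely the knowledge that the coalgebra-level $\jmath_{!}$ carries $\Fact$ into $\Fact$ --- which is the statement of the lemma. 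Knowing that both lifts of $\jmath_{!}$ cover the same Kan extension on $\Fun(\D_{G},\C)\to\Fun(\Mfld_{G},\C)$ tells you the underlying objects agree, but to conclude the coalgebra structures agree you would need a separate coherence argument comparing the \cite{AyalaFrancis:FHTM} lift with the doctrinal-adjunction lift, and that comparison is essentially as hard as the lemma.

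Your ``sanity check'' is actually closer to the paper's argument, but it is incomplete: the formula $\jmath_{!}A(V_{I})\simeq\bigoplus_{S\subseteq I}A(V_{S})$ comes from the computation for $(\jmath_{1})_{!}:\Fun(\D_{G},\C)\to\Fun(\Disk_{G},\C)$ and is only valid when $V_{I}$ is a finite disjoint union of disks. Factorizability of $\jmath_{!}A$ in $\Coalg_{\Com}(\Fun(\Mfld_{G},\C))$ must be checked at every $M\in\Mfld_{G}$, i.e., one must show $\jmath_{!}A(M)\to\bigotimes_{i\in\pi_{0}(M)}\jmath_{!}A(M_{i})$ is an equivalence for all manifolds, not just disk unions. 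The paper handles this by observing the decomposition of overcategories $\jmath_{/U_{I}}\simeq\prod_{I}\jmath_{/U_{i}}$ for arbitrary $U_{I}\in\Mfld_{G}$ and computing $\jmath_{!}A(U_{I})\simeq\colim_{\prod_{I}\jmath_{/U_{i}}}\bigotimes_{I}A\simeq\bigotimes_{I}\colim_{\jmath_{/U_{i}}}A\simeq\bigotimes_{I}\jmath_{!}A(U_{i})$, using factorizability for the passage to the tensor and distributivity of $\otimes$ over colimits. Promoting your sanity check from the finite-poset formula to this general colimit decomposition would give the paper's proof.
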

\begin{proof}
For $U_I\in \Mfld_G$, the value of the left Kan extension $\jmath_!A$ on $U_I$ is computed as the colimit over the overcategory $\jmath_{/U_I}$ \cite[4.3.2]{Lurie:HTT}. By inspection, sending an object $V_J\to U_I$ with underlying map of finite sets $f:J\to I$ to the tuple $(V_{f^{-1}(i)}\to U_i)_{i\in I}$ determines an equivalence of categories $\jmath_{/U_I}\simeq \prod_{I}\jmath_{/U_i}$. Using this observation, we compute that \begin{align*}\jmath_!A(U_I)&\simeq \colim_{\jmath_{/U_I}}A\\&\simeq \colim_{\prod_I\jmath_{/U_i}}A\\
&\simeq \colim_{\prod_I\jmath_{/U_i}}\bigotimes_I A\\
&\simeq \bigotimes_I\colim_{\jmath_{/U_i}}A\\&\simeq \bigotimes_I \jmath_!A(U_i),\end{align*} where for the third equivalence we have used our assumption that $A$ is factorizable to obtain the equivalences $A(\amalg_I V_{f^{-1}(i)})\simeq \bigotimes_IA(V_{f^{-1}(i)})$ naturally in $V$ and $f$, and for the fourth equivalence we have used that $\otimes$ distributes over colimits, since $\C$ is presentably symmetric monoidal.
\end{proof}

\subsection{Nilpotence}\label{sec:nilpotence} In this section, following \cite[5.1.2]{FrancisGaitsgory:CKD}, we establish an excellent property of the disjoint monoidal structure, its pro-nilpotence.

\begin{definition}[\cite{FrancisGaitsgory:CKD}]\label{nilpotence definition}
Let $\V$ be a nonunital presentably symmetric monoidal stable $\infty$-category. We say that $\V$ is \emph{pro-nilpotent} if it can be exhibited as a limit $$\xymatrix{\V\ar[r]^-\sim& \displaystyle\lim_{\mathbb{N}^\op}\V_i}$$ of nonunital presentably symmetric monoidal stable $\infty$-categories such that \begin{enumerate}
\item $\V_0=\pt$,
\item for every $i\geq j$, the transition functor $f_{i,j}:\V_i\to \V_j$ commutes with limits and colimits, and
\item for every $i$, the restriction of the tensor product to $\ker(f_{i,i-1})\otimes \V_i$ is null.
\end{enumerate}
We say that $\V$ is \emph{nilpotent of order $r$} if $f_{i,j}$ is an equivalence for $i,j\geq r$.
\end{definition}

\begin{remark}
Using (3) inductively with (1) as a base case, it is easy to see that $r+1$-fold tensor products vanish in $\V$ when $\V$ is nilpotent of order $r$.
\end{remark}

To establish this property in the case at hand, we will exploit the filtration of $\Ran_G$ by cardinality.

\begin{definition}
The $k$-\emph{truncated absolute Ran category} is the full subcategory $\Ran_{G, k}\subseteq \Ran_G$ spanned by the objects $U_I\subseteq M$ with $|I|\leq k$.
\end{definition}

We extend Definition \ref{def:constructible cosheaf} in the obvious way to obtain a notion of (reduced) constructible cosheaf on $\Ran_{G,k}$. 

\begin{lemma}
Let $f:F_1\to F_2$ be a morphism and $F$ an object in $\cShv^\cbl(\Ran_G,\C)$. If $f|_{\Ran_{G,k}}$ is an equivalence, then so is $(f\otimes^\amalg \id_{F})|_{\Ran_{G, k}}$.
\end{lemma}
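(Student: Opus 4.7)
The strategy is to unpack the disjoint tensor product via the right Day convolution formula from Proposition \ref{prop:disjoint day} (transported through the equivalence of Lemma \ref{lem:cosheaf identification}), and to observe that each summand evaluates its inputs only on sub-indexings of the argument, hence on objects of $\Ran_{G,|I|}$. There is no serious obstacle — the argument is essentially bookkeeping.

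Concretely, the binary case of Proposition \ref{prop:disjoint day} together with Lemma \ref{lem:cosheaf identification} supplies a natural equivalence
\[
(H \otimes^\amalg F)(U_I \subseteq M) \;\simeq\; \bigoplus_{I = I_1 \sqcup I_2} H(U_{I_1} \subseteq U_{I_1}) \otimes F(U_{I_2} \subseteq U_{I_2}),
\]
natural in $H \in \cShv^\cbl(\Ran_G,\C)$, where the sum runs over ordered (possibly empty) partitions of $I$. Consequently, the map $(f \otimes^\amalg \id_F)(U_I \subseteq M)$ is identified with the direct sum, over such partitions, of the maps
\[
f(U_{I_1} \subseteq U_{I_1}) \otimes \id_{F(U_{I_2} \subseteq U_{I_2})}.
\]

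Now suppose $|I| \leq k$. Then each $|I_j| \leq |I| \leq k$, so every object $(U_{I_j} \subseteq U_{I_j})$ lies in $\Ran_{G,k}$. By the hypothesis on $f$, each component $f(U_{I_1} \subseteq U_{I_1})$ is an equivalence in $\C$. Because $\C$ is presentably symmetric monoidal and stable, tensoring with $F(U_{I_2} \subseteq U_{I_2})$ preserves equivalences, and finite direct sums of equivalences are equivalences. Summing over the finite set of ordered partitions of $I$ shows that $(f \otimes^\amalg \id_F)(U_I \subseteq M)$ is an equivalence whenever $|I| \leq k$, which is exactly the claim.

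The only substantive observation driving the proof is that the binary disjoint tensor product at an index set $I$ only sees values on sub-indexings of $I$, so the cardinality filtration of $\Ran_G$ is respected by $- \otimes^\amalg F$; everything else is formal.
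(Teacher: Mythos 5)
Your proof is correct and takes essentially the same approach as the paper: unwind the disjoint tensor product via the right Day convolution formula, observe each summand only sees index sets of cardinality at most $|I|\leq k$, and conclude by applying the hypothesis termwise. The only cosmetic difference is that you evaluate $H$ on $U_{I_1}\subseteq U_{I_1}$ rather than on $U_{I_1}\subseteq M$ as the paper does; by constructibility these are equivalent, and both objects lie in $\Ran_{G,k}$, so either presentation works.
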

\begin{proof}
Given an object $U_I\subseteq M$ with $|I|\leq k$, we have the commuting diagram $$\xymatrix{
(F_1\otimes^\amalg F)(U_I\subseteq M)\ar[d]_-{(f\otimes^\amalg F)(U_I\subseteq M)}\ar[r]^-\sim&\displaystyle\bigoplus_{I_1\amalg I_2=I}F_1(U_{I_1}\subseteq M)\otimes F(U_{I_2}\subseteq M)\ar[d]^-{\bigoplus f(U_{I_1}\subseteq M)\otimes \id_{F(U_{I_2}\subseteq M)}}\\
(F_2\otimes^\amalg G)(U_I\subseteq M)\ar[r]^-\sim&\displaystyle\bigoplus_{I_1\amalg I_2=I}F_2(U_{I_1}\subseteq M)\otimes G(U_{I_2}\subseteq M).
}$$ For each $I_1\amalg I_2=I$, we have $|I_1| \leq |I|\leq k$, so $f(U_{I_1}\subseteq M)$ is an equivalence by assumption. Hence the righthand vertical arrow is an equivalence, so the lefthand vertical must also be an equivalence by two-out-of-three.
\end{proof}

From \cite[2.2.1.9]{Lurie:HA}, we conclude that $\cShv^\cbl(\Ran_{G,k},\C)$ inherits a symmetric monoidal structure, and $\cShv^\cbl_0(\Ran_{G,k},\C)$ a nonunital symmetric monoidal structure, rendering the restriction from $\Ran_G$ symmetric monoidal. These successive localizations assemble to form an inverse system 
\[\xymatrix{
\cShv_0^\cbl(\Ran_G,\C)\ar[r]&\displaystyle\bigg(\cdots\ar[r]&\cShv_0^\cbl(\Ran_{k,G},\C)\ar[r]&\cShv_0^\cbl(\Ran_{G,k-1},\C)\ar[r]&\cdots\displaystyle\bigg)
}\] of nonunital symmetric monoidal $\infty$-categories. Since equivalences of nonunital symmetric monoidal $\infty$-categories are detected at the level of underlying $\infty$-categories, and since the natural map \[\colim_\mathbb{N}\Ran_{G,k}\to \Ran_G\] is an equivalence, we conclude that the induced map \[\xymatrix{\cShv_0^\cbl(\Ran_G,\C)\ar[r]^-\sim&\displaystyle\lim_{\mathbb{N}^\op}\cShv_0^\cbl(\Ran_{G,k},\C)}\] is an equivalence of nonunital symmetric monoidal $\infty$-categories.

\begin{proposition}\label{prop:chiral is nilpotent}
The disjoint monoidal structure on $\cShv_0^\cbl(\Ran_G,\C)$ is pro-nilpotent.
\end{proposition}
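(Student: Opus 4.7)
The plan is to verify the three conditions of Definition \ref{nilpotence definition} using the tower $\V_k := \cShv_0^\cbl(\Ran_{G,k},\C)$ together with transition functors $f_{k,k-1}$ given by restriction along the inclusion $\Ran_{G,k-1}\hookrightarrow \Ran_{G,k}$. The presentation of $\cShv_0^\cbl(\Ran_G,\C)$ as the nonunital symmetric monoidal limit $\lim_{\mathbb{N}^\op}\V_k$ has already been established in the discussion immediately preceding the proposition, so only (1), (2), and (3) of the definition require checking.

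Conditions (1) and (2) are essentially formal. For (1), the category $\Ran_{G,0}$ consists only of objects of the form $\varnothing\subseteq M$, all of which a reduced cosheaf sends to $0$, so $\V_0\simeq \mathrm{pt}$. For (2), reducedness and constructibility are pointwise conditions closed under both limits and colimits formed in $\Fun(\Ran_{G,k},\C)$, so limits and colimits in $\V_k$ coincide with those computed pointwise in $\C$; since restriction is itself computed pointwise, it preserves all limits and colimits.

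The substantive condition is (3). Given $F_1\in\ker(f_{k,k-1})$ and $F_2\in\V_k$, I would evaluate their disjoint tensor product on any object $U_I\subseteq M$ with $|I|\leq k$ using the formula for reduced cosheaves displayed earlier in \S\ref{sec:factorizable coalgebras}:
$$(F_1\otimes^\amalg F_2)(U_I\subseteq M)\simeq \bigoplus_{\pi\in\Surj(I,\{1,2\})} F_1(U_{\pi^{-1}(1)}\subseteq M)\otimes F_2(U_{\pi^{-1}(2)}\subseteq M).$$
Every $\pi$ in the indexing set satisfies $|\pi^{-1}(1)|\leq |I|-1\leq k-1$, so $F_1(U_{\pi^{-1}(1)}\subseteq M)\simeq 0$ by the hypothesis that $F_1$ lies in $\ker(f_{k,k-1})$. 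Each summand therefore vanishes, so $F_1\otimes^\amalg F_2\simeq 0$ in $\V_k$.

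There is no real obstacle: the entire argument hinges on the combinatorial observation that a surjection from $I$ onto a two-element set forces each preimage to have cardinality strictly less than $|I|$, at which point (3) is immediate from the restriction-to-surjections form of the disjoint tensor product on reduced cosheaves. The only nontrivial ingredients — the tower identification and the Surj-indexed formula for $\otimes^\amalg$ on reduced cosheaves — are already in hand above.
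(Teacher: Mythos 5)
Your proof is correct and follows the same route as the paper's: condition (1) via $\Ran_{G,0}$ having only empty objects, condition (2) via restriction preserving (co)limits, and condition (3) via the $\Surj$-indexed (equivalently, partitions into nonempty parts) formula for $\otimes^\amalg$ together with the cardinality bound $|\pi^{-1}(1)| < |I| \leq k$. The only cosmetic difference is in (2), where the paper notes that restriction is both a left and a right adjoint while you instead observe that (co)limits in the cosheaf subcategory are computed pointwise; both are valid and essentially the same observation.
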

\begin{proof} Having exhibited $\cShv_0^\cbl(\Ran_G,\C)$ as a sequential limit, there are three points to verify. 
\begin{enumerate}
\item Every object of $\Ran_{G,0}$ is of the form $\varnothing\subseteq M$, so $\cShv_0^\cbl(\Ran_{G,0},\C)=\{0\}$ by constructibility and reduction.
\item The transition functor $\cShv_0^\cbl(\Ran_{G,i},\C)\to \cShv_0^\cbl(\Ran_{G,j},\C)$ is restriction along the inclusion $\Ran_{G,j}\to \Ran_{G,i}$, which is both a right and a left adjoint and hence commutes with limits and colimits.
\item Let $F_1$ and $F_2$ be objects of $\cShv_0^\cbl(\Ran_{G,k},\C)$ such that the restriction of $F_1$ to $\Ran_{G,k-1}$ is trivial, and let $U_I\subseteq M$ be such that $|I|\leq k$. We have the equivalence \[\xymatrix{(F_1\otimes^\amalg F_2)(U_I\subseteq M)\ar[r]^-\sim&\displaystyle\bigoplus_{I_1\amalg I_2=I,\,I_j\neq \varnothing}F_1(U_{I_1}\subseteq M)\otimes F_2(U_{I_2}\subseteq M).}\] Since $I_2\neq \varnothing$, we have $|I_1|<|I|\leq k$, so $F_1(U_{I_1}\subseteq M)\simeq0$ by our assumption on $F_1$. Hence each term of the above sum vanishes, as desired.
\end{enumerate}
\end{proof}

\section{Lie models} 

\subsection{Lie algebras}\label{sec:Lie section} It has long been known that the structure of a Lie algebra is controlled by an operad in graded abelian groups, the \emph{Lie operad}, whose $\Sigma_n$-module of arity $n$ operations is the homology of a certain partially ordered set of partitions (see \cite{Fresse:KDOHPP}). Recently, the concept of a Lie algebra has been lifted to the world of stable homotopy. These \emph{spectral Lie algebras} are algebras over the operad introduced in \cite{Ching:BCTOGDI}, denoted here by the letter $\Lie$, whose components are the Spanier-Whitehead duals of these same partition posets, and which, upon passing to $\mathbb{Z}$-modules, recovers the familiar Lie operad. At the time of writing, spectral Lie algebras are the subject of intense investigation; see \cite{AntolinCamarena:MTHFSLA}, \cite{Brantner:LTTSLA}, and \cite{Kjaer:OPHFAOSLO}, for example. 

Our passage from the cocommutative model of Proposition \ref{prop:factorizable 2} to the world of Lie algebras will rely on a fundamental relationship enjoyed by these structures. Since its discovery and spectacular exploitation by Quillen in the seminal paper \cite{Quillen:RHT}, this relationship of \emph{Koszul duality} has been studied intensively in increasingly general contexts---see \cite{GinzburgKapranov:KDO}, \cite{Hinich:DGCFS}, \cite{Ching:BCTOGDI}, \cite{FrancisGaitsgory:CKD}, and \cite{Francis:TCHCER}, for example. From a modern viewpoint, it would seem that Lie algebras should be thought of as being defined by this duality. 

The use of Koszul duality for spectral operads in a higher categorical context is well-established in the literature, but its foundations remain folklore at the time of writing, and it is beyond the scope of this paper to alter this state of affairs. We now summarize the precise version of the theory that we shall employ, which is gathered from \cite[3,\,4]{FrancisGaitsgory:CKD}.

Let $\V$ be a stable nonunital presentably symmetric monoidal $\infty$-category. The main player is the functor \[\overline C^\Lie:\Alg_\Lie(\V)\longrightarrow \Coalg^\mathrm{nu}_{\Com}(\V)\] of (reduced) \emph{Lie chains}. This functor has the following features:

\begin{enumerate}
\item there is a functorial filtration $\overline C^\Lie(L)\simeq \colim_\mathbb{N}\overline C^\Lie(L)_{\leq k}$ of cocommutative coalgebras with associated graded coalgebra $$\gr\,\overline C^\Lie(L):=\bigoplus_{k\geq1}\mathrm{cofib}\big(\overline C^\Lie(L)_{\leq k}\to \overline C^\Lie(L)_{\leq k+1}\big)\simeq \Sym(L[1]);$$
\item the composite of $\overline C^\Lie$ with the forgetful functor to $\V$ is naturally equivalent, after a shift of degree $-1$, to the monadic bar construction $B(\id, \Lie,-)$ against the free Lie algebra functor.
\end{enumerate}

When $\V$ is a unital symmetric monoidal $\infty$-category viewed as nonunital, we write \[C^\Lie:=\overline C^\Lie\oplus 1_\V\] for the corresponding coaugmented cocommutative coalgebra.

\begin{remark}
In the setting of chain complexes over a field of characteristic zero, $C^\Lie$ is modeled by the classical Chevalley-Eilenberg complex (see \cite{Fresse:KDOHPP} for a comparison).
\end{remark}

We have the following key result concerning Lie chains.

\begin{theorem}[Francis-Gaitsgory]\label{thm:duality}
Let $\V$ be a nonunital presentably symmetric monoidal stable $\infty$-category with the following properties: \begin{enumerate}
\item $\V$ is pro-nilpotent;
\item the norm map $(v^{\otimes k})_{\Sigma_k}\to (v^{\otimes k})^{\Sigma_k}$ is an equivalence for very $v\in\V$ and $k\in\mathbb{N}$.
\end{enumerate}
Then $\overline C^\Lie$ is an equivalence.
\end{theorem}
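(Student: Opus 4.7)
The plan is to build a candidate inverse functor, reduce to the nilpotent case via the pro-nilpotent presentation of $\V$, and then induct on the nilpotence order using the given filtration of $\overline C^\Lie$. Throughout, the norm-equivalence hypothesis will be used to identify coinvariants with invariants whenever a symmetric power appears, which is the feature that allows Koszul duality to function without connectivity or characteristic-zero hypotheses.

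First I would construct a candidate inverse. Since $\V$ is presentably symmetric monoidal and $\Alg_\Lie(\V)$, $\Coalg^{\mathrm{nu}}_\Com(\V)$ are presentable, and $\overline C^\Lie$ preserves limits (being built operadically from limits of the forgetful functor to $\V$), the adjoint functor theorem supplies a left adjoint $\mathrm{Prim}:\Coalg^{\mathrm{nu}}_\Com(\V)\to\Alg_\Lie(\V)$. Concretely, one may realize $\mathrm{Prim}$ as a Lie cobar construction dual to $B(\id,\Lie,-)$; the formula is not needed for the argument, only the adjointness. It then suffices to prove the unit and counit of $(\mathrm{Prim},\overline C^\Lie)$ are equivalences.

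Next I would reduce to the nilpotent case. Writing $\V\simeq\lim_{\mathbb{N}^\op}\V_i$ as in Definition \ref{nilpotence definition}, both $\Alg_\Lie(-)$ and $\Coalg^{\mathrm{nu}}_\Com(-)$ send such limits of nonunital presentably symmetric monoidal $\infty$-categories with limit-and-colimit-preserving transitions to limits, and the functor $\overline C^\Lie$, built from the operadic data, is compatible with the transition functors. An equivalence of functors between such limits is detected levelwise, so it suffices to prove the theorem assuming $\V$ is nilpotent of some finite order $r$. The payoff is immediate: in $\V_r$, the filtration $\overline C^\Lie(L)\simeq\colim_{\mathbb{N}}\overline C^\Lie(L)_{\leq k}$ stabilizes at $k=r$, since the cofibers are built from $\Sym^{k+1}(L[1])$ and hence from $(k{+}1)$-fold tensor powers, which vanish once $k\geq r$.

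The heart of the argument is then the inductive comparison on the associated graded. On the Lie side, $\mathrm{Prim}$ carries the tautological filtration of its output lifted from the filtration of its input, and I would equip $\mathrm{Prim}\,\overline C^\Lie(L)$ with the filtration inherited from $\overline C^\Lie(L)_{\leq k}$, and dually $\overline C^\Lie\,\mathrm{Prim}(C)$ with the filtration inherited from the primitive filtration of $C$. The key computation is that the associated graded of the unit $L\to\mathrm{Prim}\,\overline C^\Lie(L)$ is the identity on $L$: indeed, using $\gr\,\overline C^\Lie(L)\simeq\Sym(L[1])$ together with the fact that the Lie primitives of a cofree cocommutative coalgebra $\Sym(L[1])$ recover $L$, one obtains the claim on associated graded. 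The norm-map hypothesis enters decisively here: it identifies the coinvariants $\Sym^k(L[1])$ with the invariants $(L[1]^{\otimes k})^{\Sigma_k}$, so that what is visibly the cofree cocommutative coalgebra (in the invariant presentation) agrees with the symmetric-power presentation that shows up in the associated graded of $\overline C^\Lie$. Given the equivalence on associated gradeds and the finite length of the filtration, a standard induction on the length shows the unit is an equivalence. The counit is handled symmetrically, filtering $C$ by the cofiltration dual to its primitive filtration and comparing associated gradeds.

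The main obstacle I expect is the handling of Koszul duality without connectivity hypotheses. Classically the bar-cobar duality converges under a connectivity assumption that ensures the relevant spectral sequences collapse; here the role of connectivity is entirely subsumed by pro-nilpotence, which guarantees finite length of the comparison filtration in each $\V_i$ and hence termwise convergence. The delicate point is to verify that the filtrations on $\mathrm{Prim}\,\overline C^\Lie(L)$ and $\overline C^\Lie\,\mathrm{Prim}(C)$ that arise naturally from the unit and counit are genuinely complete and finite in the nilpotent case, and that their associated gradeds match under the norm-enabled identification; this bookkeeping, rather than any single hard computation, is where the real work lies.
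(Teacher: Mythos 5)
Your approach differs genuinely from the paper's. The paper reduces the theorem to citing Francis--Gaitsgory: by \cite[3.3]{FrancisGaitsgory:CKD}, $\overline C^\Lie$ factors through an auxiliary $\infty$-category $\D$ of conilpotent coalgebras equipped with a codivided-copower structure; by \cite[4.1.2]{FrancisGaitsgory:CKD}, pro-nilpotence gives $\Alg_\Lie(\V)\simeq\D$ essentially for free; and the only remaining work is to show $\D\simeq\Coalg_\Com^{\mathrm{nu}}(\V)$, which amounts to checking that $\bigoplus_k(v^{\otimes k})_{\Sigma_k}\to\prod_k(v^{\otimes k})_{\Sigma_k}\to\prod_k(v^{\otimes k})^{\Sigma_k}$ is an equivalence --- the first arrow by pro-nilpotence (\cite[4.2.1]{FrancisGaitsgory:CKD}), the second by hypothesis (2). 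You instead propose to re-derive the FG equivalence from scratch via a filtration-and-induction argument. That is a legitimate strategy in principle, and it is closer to what FG themselves do internally; what it buys is self-containment, at the cost of redoing a substantial amount of their work.

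That said, there are concrete gaps. First, the adjunction setup is muddled: you claim $\overline C^\Lie$ preserves limits and hence has a \emph{left} adjoint $\mathrm{Prim}$, but $\overline C^\Lie$ is defined (up to shift) as the monadic bar construction $B(\id,\Lie,-)$, a geometric realization, so the natural expectation is that it preserves colimits and has a \emph{right} adjoint. Moreover, you then write ``the unit $L\to\mathrm{Prim}\,\overline C^\Lie(L)$,'' which is the unit only if $\mathrm{Prim}$ is the right adjoint, contradicting your earlier claim. This is not merely cosmetic, since your entire inductive comparison hinges on which direction the unit and counit point. Second, the assertion that ``the Lie primitives of a cofree cocommutative coalgebra $\Sym(L[1])$ recover $L$'' is precisely the base case of the Koszul duality being proven and cannot be taken as given; it is not obvious and is where the identification of $(v^{\otimes k})_{\Sigma_k}$ with $(v^{\otimes k})^{\Sigma_k}$ via the norm map must be invoked, together with the finiteness of the sum in the nilpotent case. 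You gesture at this but never cleanly separate the role of pro-nilpotence (supplying conilpotence and collapsing $\bigoplus$ against $\prod$) from the role of the norm-map hypothesis (identifying coinvariants with invariants); the paper's proof makes this decomposition transparent, and without it your ``bookkeeping'' paragraph is doing more work than it admits. Finally, the claim that $\Alg_\Lie(-)$, $\Coalg_\Com^{\mathrm{nu}}(-)$, and $\overline C^\Lie$ all commute with the limit $\V\simeq\lim_{\mathbb{N}^\op}\V_i$ is true but nontrivial --- FG address exactly this compatibility in \cite[5.1.3]{FrancisGaitsgory:CKD} --- and deserves more than an assertion. In short: the strategy is sound in outline, but as written it re-opens the hard part of \cite[4.1.2]{FrancisGaitsgory:CKD} without the machinery (the category $\D$, codivided copowers) that makes the induction actually close.
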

\begin{proof}
Following \cite[3.3]{FrancisGaitsgory:CKD}, the functor $\overline C^\Lie$ (denoted there by $\mathrm{Bar}_{\mathrm{Lie}}^{\mathrm{enh}}$) factors through the inclusion of the $\infty$-category of non-counital cocommutative coalgebras that are both conilpotent and equipped with a ``codivided copower'' structure, which we denote for the duration of this argument by $\D$. This inclusion is represented on cofree objects by the composite \[\xymatrix{\displaystyle\bigoplus_{k\geq1} (v^{\otimes k})_{\Sigma_k}\ar[r]&\displaystyle\prod_{k\geq1} (v^{\otimes k})_{\Sigma_k}\ar[r]& \displaystyle\prod_{k\geq1} (v^{\otimes k})^{\Sigma_k}}\] of the canonical map from coproduct to product with the product of the respective norm maps. According to \cite[4.1.2]{FrancisGaitsgory:CKD}, the assumption of pro-nilpotence guarantees that $\overline C^\Lie$ induces an equivalence $\Alg_\Lie(\V)\simeq \D$, so the claim will be proven upon verifying that $D\simeq \Coalg^\mathrm{nu}_\Com(C)$, for which it suffices to check that each of the arrows depicted above is an equivalence. The first equivalence follows from pro-nilpotence as in \cite[4.2.1]{FrancisGaitsgory:CKD}, while the second follows from our assumption (2).
\end{proof}

\begin{remark}
The observant reader will notice that the statement of \cite[4.3.3]{FrancisGaitsgory:CKD}, the analogue in that work of Theorem \ref{thm:duality}, includes the assumption that $\V$ is tensored over a field of characteristic zero. An examination of the proof reveals that this hypothesis is used only to verify that the norm maps as above are equivalences. For this reason, Theorem \ref{thm:duality} should be regarded as implicit in \cite{FrancisGaitsgory:CKD}.
\end{remark}

\subsection{Duality and factorization}\label{sec:duality and factorization}

One of the key insights of \cite{FrancisGaitsgory:CKD} is that the condition of factorizability has a particularly simple interpretation under the duality of Theorem \ref{thm:duality}. The basic calculation underlying this interpretation is the following (see the conventions for a notational reminder):

\begin{lemma}\label{graded vanish}
Suppose that $F\in\cShv_0^\cbl(\Ran_G,\C)$ is supported on the diagonal. There is a natural equivalence $$\Sym_\amalg^J(F)(U_I\subseteq M)\simeq\begin{cases}
\displaystyle\bigotimes_{I}F(U_i\subseteq M)\quad& |I|=|J|\\
0\quad&\text{else.}
\end{cases}$$
\end{lemma}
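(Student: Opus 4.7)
The plan is to reduce the statement to a direct calculation using the explicit formula for the disjoint tensor product on reduced cosheaves, and then analyze the $\Sigma_J$-coinvariants.

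First I would unfold $\Sym_\amalg^J(F) = (F^{\otimes_\amalg J})_{\Sigma_J}$ and invoke the formula for the reduced disjoint tensor product recorded just after Corollary \ref{cor:factorizable 1}, which gives
\[
F^{\otimes_\amalg J}(U_I \subseteq M) \simeq \bigoplus_{\pi \in \Surj(I, J)} \bigotimes_{j \in J} F(U_{\pi^{-1}(j)} \subseteq M).
\]
The symmetric group $\Sigma_J$ acts by sending the $\pi$-summand to the $(\sigma \circ \pi)$-summand and permuting tensor factors accordingly.

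Next I would use the support hypothesis. Since $F$ is supported on the diagonal, the factor $F(U_{\pi^{-1}(j)} \subseteq M)$ vanishes unless $|\pi^{-1}(j)| = 1$. A surjection satisfying this condition on every fiber is a bijection, so the only summands that survive are those indexed by $\Bij(I, J)$. In particular, if $|I| \neq |J|$ there are no such summands and the whole expression is $0$, which handles the second case of the statement.

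When $|I| = |J|$, the action of $\Sigma_J$ on $\Bij(I, J)$ by post-composition is free and transitive, so the summands form a free $\Sigma_J$-orbit. Picking any bijection $\pi_0 : I \to J$ identifies $\bigotimes_{j} F(U_{\pi_0^{-1}(j)} \subseteq M) \simeq \bigotimes_I F(U_i \subseteq M)$, and taking coinvariants of a free $\Sigma_J$-action on a direct sum of copies of a fixed object collapses to a single copy. This yields the desired equivalence. The one point that deserves care is checking that the $\Sigma_J$ action really is free at the level of the $\infty$-categorical coinvariants (not just on the indexing set); this follows because the orbit of a single bijection under the combined permutation of the indexing set and the tensor factors is a free $\Sigma_J$-torsor of objects in $\C$, so its colimit over $B\Sigma_J$ is just one representative.
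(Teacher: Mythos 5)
Your proof is correct and follows essentially the same route as the paper: unfold $\Sym^J_\amalg$ via the $\Surj(I,J)$-indexed sum formula, observe that the support hypothesis kills all summands except those indexed by bijections, and then use freeness of the $\Sigma_J$-action on bijections to collapse the coinvariants. The only cosmetic difference is that you merge the cases $|I|<|J|$ and $|I|>|J|$ into a single "no bijections" case (the paper treats $|I|<|J|$ as $\Surj(I,J)=\varnothing$ directly), and you add a welcome word of care about why freeness at the level of the indexing $\Sigma_J$-set suffices for the $\infty$-categorical coinvariants.
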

\begin{proof} We have the natural equivalence
\[\xymatrix{\Sym_\amalg^J(F)(U_I\subseteq M)\ar[r]^-\sim&\bigg(\displaystyle\bigoplus_{\Surj(I,J)}\bigotimes_JF(U_{\pi^{-1}(j)}\subseteq M)\bigg)_{\Sigma_J}.}\]
There are three cases.\begin{enumerate}
\item If $|I|<|J|$, then $\Surj(I,J)=\varnothing$, so the sum in question is empty and $\Sym_\amalg^J(F)(U_I\subseteq M)\simeq0$.
\item If $|I|=|J|$, then $\Surj(I,J)$ is a free $\Sigma_J$-set on the class of a bijection $I\cong J$, and the claim follows.
\item If $|I|>|J|$, then, for any $\pi\in \Surj(I,J)$, $|\pi^{-1}(j)|>1$ for some $j\in J$. Then $F(U_{\pi^{-1}(j)}\subseteq M)\simeq0$ by assumption, so that every term in the sum vanishes.
\end{enumerate}
\end{proof}

The proof of the following result is essentially a transcription of the argument of \cite[5.2.1]{FrancisGaitsgory:CKD}:

\begin{lemma}\label{lem:factorization}
Let $L$ be a Lie algebra in $\cShv_0^\cbl(\Ran_G,\C)$. Then $\overline C_\amalg^\Lie(L)$ is factorizable if and only if $L$ is supported on the diagonal.
\end{lemma}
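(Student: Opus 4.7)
The argument proceeds via the canonical filtration
$\overline{C}^\Lie_\amalg(L)\simeq \colim_k \overline{C}^\Lie_\amalg(L)_{\le k}$,
whose associated graded is the free non-counital cocommutative coalgebra $\Sym_\amalg(L[1])$. Two preliminary observations govern the computation. First, the reduced disjoint tensor product of reduced functors vanishes on singletons, so $\Sym^k_\amalg(F)(U_i)=0$ for any reduced $F$ and any $k\ge 2$; hence $\overline{C}^\Lie_\amalg(L)(U_i)\simeq L[1](U_i)$ is concentrated in filtration degree~$1$, placing the target $\bigotimes_I \overline{C}^\Lie_\amalg(L)(U_i)\simeq \bigotimes_I L[1](U_i)$ of the factorizability map in product filtration degree $|I|$. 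Second, for $|I|\ge 2$ the iterated comultiplication vanishes on $F_1=L[1]$, since the product filtration on $\bigotimes^{\mathrm{RD}}_I \overline{C}^\Lie_\amalg(L)$ begins at level $|I|$ by non-counitality; moreover, the shuffle formula on the free coalgebra $\Sym_\amalg(L[1])$ shows that after projection onto $\bigotimes_I L[1](U_i)$, only the top summand $\Sym^{|I|}_\amalg(L[1])$ contributes non-trivially, via the canonical identification $\Sym^{|I|}_\amalg(L[1])(U_I)\simeq \bigotimes_I L[1](U_i)$ supplied by Lemma~\ref{graded vanish}.

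For the \emph{if} direction, suppose $L$ is supported on the diagonal. Lemma~\ref{graded vanish} forces $\gr_k\overline{C}^\Lie_\amalg(L)(U_I)=\Sym^k_\amalg(L[1])(U_I)$ to vanish except at $k=|I|$, where it equals $\bigotimes_I L[1](U_i)$. The filtration collapses and yields $\overline{C}^\Lie_\amalg(L)(U_I)\simeq \bigotimes_I L[1](U_i)$ concentrated in filtration degree $|I|$; under this identification, the factorizability map coincides with the canonical identification of the second observation, and is therefore an equivalence.

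For the \emph{only if} direction, I induct on $|I|\ge 2$, assuming $L(U_J)=0$ for $1<|J|<|I|$. Any surjection $\pi\colon I\twoheadrightarrow [k]$ with $1<k<|I|$ has some fibre of size $s$ with $2\le s<|I|$, so the inductive hypothesis forces the corresponding factor $L[1](U_{\pi^{-1}(j)})$, and hence $\Sym^k_\amalg(L[1])(U_I)$, to vanish. The filtration on $\overline{C}^\Lie_\amalg(L)(U_I)$ thus reduces to the single cofibre sequence
\[
L[1](U_I)\longrightarrow \overline{C}^\Lie_\amalg(L)(U_I)\longrightarrow \bigotimes_I L[1](U_i).
\]
By the second observation, the factorizability map vanishes on $F_1=L[1](U_I)$ and so factors canonically through the quotient on the right; on $\gr_{|I|}$ it is the canonical identification, so this induced factorization is an equivalence on the target. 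Factorizability therefore forces the quotient map to be an equivalence, whence its fibre $L[1](U_I)$ must vanish. The delicate step is precisely this identification of the factorizability map with the filtration quotient under the inductive hypothesis, which encodes at the level of filtered objects the Koszul duality between $L$ and $\overline{C}^\Lie_\amalg(L)$.
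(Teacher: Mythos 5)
Your proof is correct and takes essentially the same approach as the paper: work through the filtration whose associated graded is $\Sym_\amalg(L[1])$, invoke Lemma~\ref{graded vanish}, and observe that diagonal support is exactly the condition making the intermediate graded pieces vanish. The paper reduces to the associated graded coalgebra at the outset (so the factorizability map is checked directly on $\Sym_\amalg(L[1])$) and phrases the ``only if'' direction via a minimal $|I|$ rather than induction, which sidesteps the delicate ``factors canonically through the quotient'' step that you flag; these are presentational variants of the same argument.
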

\begin{proof} It suffices to prove the claim for $\gr\,\overline C_\amalg^\Lie(L)\simeq\Sym_\amalg(L[1])$ instead. We make use of the commutative diagram
$$\xymatrix{
\Sym_\amalg(L[1])\ar[r]^-\Gamma\ar[d]&\Sym_\amalg(\Sym_\amalg(L[1]))\ar[r]\ar[d]&\Sym_\amalg^{k}(\Sym_\amalg(L[1]))\ar[d]\\
\Sym_\amalg^{k}(L[1])\ar[r]&\displaystyle\bigoplus_{l\geq0}\bigg(\bigoplus_{i_1+\cdots+i_l=k}\bigotimes_{j=1}^l\Sym_\amalg^{i_j}(L[1])\bigg)_{\Sigma_l}\ar[r]^-{k=l}&\Sym_\amalg^k(L[1]),
}$$ in which the bottom composite is the identity. 

Suppose that $L$ is supported on the diagonal, and evaluate this diagram at $U_I\subseteq M$ where $|I|=k$. By Lemma \ref{graded vanish}, the outermost vertical maps become equivalences, so the top composite does as well, by two-out-of-three. Hence $\Sym_\amalg(L[1])$ is factorizable in this case.

Suppose instead that $L$ is not supported on the diagonal. Then there is some object $U_I\subseteq M$ with $|I|>1$ such that $L(U_I\subseteq M)\not\simeq0$, and we may take $|I|$ to be minimal with respect to the existence of such an object. Then $$\Sym_\amalg(L[1])(U_I\subseteq M)\simeq L(U_I\subseteq M)[1]\oplus \bigotimes_I L(U_i\subseteq M)[1]$$ by minimality, and we conclude that $\Sym_\amalg(L[1])$ is not factorizable, since $L(U_I\subseteq M)[1]\not\simeq0$.
\end{proof}

\begin{corollary}\label{cor:lie model}
Let $\C$ be a stable, presentably symmetric monoidal $\infty$-category. There is a commuting diagram \[\xymatrix{\Alg^\mathrm{nu}_{\mathbb{E}_G}(\C)\ar[d]\ar[rr] &&\Alg_\Lie(\cShv_0^\cbl(\Ran_G,\C)_\amalg)\ar[d]\\
\Mod_G(\C)\ar[r]^-{[-1]}&\Mod_G\simeq \cShv^\loc(\Diag_G,\C)\ar[r]^-{\delta_!}&\cShv_0^\cbl(\Ran_G,\C)
}\] of $\infty$-categories in which the top functor is fully faithful with essential image the subcategory of Lie algebras supported on the diagonal.
\end{corollary}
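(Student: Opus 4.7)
The plan is to construct the top functor of the diagram as the composite
\[
\Alg^\mathrm{nu}_{\mathbb{E}_G}(\C) \xrightarrow[\sim]{(\ref{prop:factorizable 2})} \Fact(\cShv^\cbl_0(\Ran_G,\C)) \hookrightarrow \Coalg^\mathrm{nu}_\Com(\cShv^\cbl_0(\Ran_G,\C)_\amalg) \xleftarrow[\sim]{\overline C^\Lie} \Alg_\Lie(\cShv^\cbl_0(\Ran_G,\C)_\amalg),
\]
reading the final arrow right to left. Full faithfulness is then immediate as a composition of two equivalences and a fully faithful inclusion, and the characterization of the essential image follows from Lemma \ref{lem:factorization}, which identifies the preimage of $\Fact$ under $\overline C^\Lie$ with the subcategory of Lie algebras supported on the diagonal.

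To invoke Theorem \ref{thm:duality}, I must verify its two hypotheses on $\V = \cShv^\cbl_0(\Ran_G,\C)_\amalg$. Pro-nilpotence is Proposition \ref{prop:chiral is nilpotent}. For the norm map condition, the explicit formula for the disjoint tensor power evaluated at $U_I \subseteq M$ reads
\[
v^{\otimes k}(U_I \subseteq M) \simeq \bigoplus_{\pi \in \Surj(I, \{1,\ldots,k\})} \bigotimes_{j=1}^k v(U_{\pi^{-1}(j)} \subseteq M),
\]
with $\Sigma_k$ acting via its action on the indexing surjections. Because the stabilizer of any ordered partition of $I$ into nonempty parts is trivial, the $\Sigma_k$-action on $\Surj(I,\{1,\ldots,k\})$ is free, and the tensor power decomposes as a direct sum of free $\Sigma_k$-objects, for which the norm map is an equivalence.

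For commutativity of the diagram I trace the underlying object. A nonunital $\mathbb{E}_G$-algebra $A$ with underlying $G$-module $V$ corresponds via Proposition \ref{prop:discrete model} and Lemma \ref{lem:cosheaf identification} to a factorizable cosheaf $\tilde A$ satisfying $\tilde A(U \subseteq M) \simeq V$ at every singleton. Writing $L$ for the associated Lie algebra with $\overline C^\Lie_\amalg(L) \simeq \tilde A$, the support of $L$ on the diagonal combined with Lemma \ref{graded vanish} forces $\Sym^k_\amalg(L[1])(U \subseteq M) \simeq 0$ for $k \geq 2$ at any singleton. The filtration of $\overline C^\Lie(L)$ therefore collapses on singletons, yielding $V \simeq \tilde A(U \subseteq M) \simeq L(U \subseteq M)[1]$, so $L(U \subseteq M) \simeq V[-1]$. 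Through the identification $\cShv^\loc(\Diag_G,\C) \simeq \Mod_G(\C)$ of Lemma \ref{lem:verdier}, this expresses the underlying cosheaf of $L$ as $\delta_!(V[-1])$, matching the bottom composite.

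The main obstacle is verifying the norm map condition of Theorem \ref{thm:duality}, which in more standard settings forces characteristic-zero hypotheses. Here the combinatorial freeness of the $\Sigma_k$-action on the set of surjections indexing the disjoint tensor power circumvents any such hypothesis on $\C$, and it is this observation that makes the approach go through in full generality.
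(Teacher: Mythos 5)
Your proof is correct and follows essentially the same path as the paper: you verify the hypotheses of Theorem \ref{thm:duality} (pro-nilpotence via Proposition \ref{prop:chiral is nilpotent}, the norm condition via the free $\Sigma_k$-action on $\Surj(I,\{1,\ldots,k\})$), invert $\overline C^\Lie_\amalg$, use Lemma \ref{lem:factorization} to identify the essential image with Lie algebras supported on the diagonal, and then check the degree shift by observing that the filtration of $\overline C^\Lie_\amalg(L)$ collapses at singletons because the higher graded pieces $\Sym^k_\amalg(L[1])$ vanish there by Lemma \ref{graded vanish}. This matches the paper's argument in both structure and key ingredients.
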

\begin{proof}
The disjoint monoidal structure on $\cShv_0^\cbl(\Ran_G,\C)$ satisfies the hypotheses of Theorem \ref{thm:duality}; indeed, the first hypothesis is Proposition \ref{prop:chiral is nilpotent}, while the second follows from the formula \[\bigotimes_J F(U_I)\simeq \bigoplus_{\Surj(I,J)}\bigotimes_J F(U_{f^{-1}(j)})\] and the observation that $\Sigma_J$ acts freely on $\Surj(I,J)$. Thus, applying Theorem \ref{thm:duality} and Lemma \ref{lem:factorization}, we obtain the indicated equivalences in the commuting diagram \[\xymatrix{
\Fact(\cShv_0^\cbl(\Ran_G,\C))\ar@{^{(}->}[d]\ar[r]^-\sim&\Alg_\Lie(\cShv_0^\cbl(\Ran_G,\C)_\amalg)\times_{\cShv_0^\cbl(\Ran_G,\C)}\cShv^\loc(\Diag_G,\C)\ar@{^{(}->}[d]\\
\Coalg_\Com^{\mathrm{nu}}(\cShv_0^\cbl(\Ran_G,\C)_\amalg)\ar[r]^-\sim&\Alg_\Lie(\cShv_0^\cbl(\Ran_G,\C)_\amalg).
}\] Composing the clockwise composite with the equivalence of Proposition \ref{prop:factorizable 2} yields the top functor in the diagram of the statement, and fully faithfulness and the identification of the essential image follow. It remains to show that this functor introduces a suspension by $-1$ at the level of underlying functors, which follows from the observation that the map $$L[1]=\overline C_\amalg^\Lie(L)_{\leq 1}\to \overline C_\amalg^\Lie(L)$$ is an equivalence when evaluated on $U_I\subseteq M$ with $|I|=1$.
\end{proof}

\subsection{Enveloping algebras}\label{sec:enveloping algebras} In this section, we prove Theorems \ref{thm:adjunction} and \ref{thm:PBW}. As indicated in the introduction, the strategy is to find a second monoidal structure on the $\infty$-category of constructible cosheaves on $\Ran_G$ whose relationship to ordinary Lie algebras parallels the relationship of the disjoint monoidal structure to $\mathbb{E}_G$-algebras. We refer to this monoidal structure as the \emph{overlapping monoidal structure} and associate to it the symbol $\cup$ (e.g. $\otimes^\cup$). The construction of the overlapping monoidal structure will require the introduction of some auxiliary concepts and is deferred to the following section (specifically, see \S\ref{section:overlapping}) . For now, we content ourselves with the following summary:

\begin{proposition}\label{prop:overlapping exists}
There is a (unital) symmetric monoidal structure on $\cShv^\cbl(\Ran_G,\C)$ in which the tensor product is given by the formula \[F\otimes^\cup G\simeq \mu_!(F\boxtimes G).\] Moreover, there is a natural transformation $\otimes^\cup\to \otimes^\amalg$ endowing the identity functor on $\cShv^\cbl(\Ran_G,\C)$ with the structure of a lax monoidal functor.
\end{proposition}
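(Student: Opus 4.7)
My plan is to realize the overlapping monoidal structure as the one induced by pushforward along the coherently commutative, associative, and unital ``union'' operation on $\Ran_G$, mimicking the topological picture in which the star-type tensor product on cosheaves over $\Ran(M)$ arises from the multiplication $\mu:\Ran(M)\times\Ran(M)\to\Ran(M)$, $(S,T)\mapsto S\cup T$. The two-variable pushforward $\mu_!$ constructed in \S\ref{sec:push pull section} already provides the binary operation; the remaining task is to promote it to a full symmetric monoidal structure and to compare it with the disjoint structure of Proposition \ref{prop:disjoint day}.

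First I would extend $\mu_!$ to higher arities by constructing, for each $k\geq 0$, a pushforward
\[
\mu^k_!:\cShv^\cbl(\Ran_G^{(k)},\C)\longrightarrow \cShv^\cbl(\Ran_G,\C),
\]
where $\Ran_G^{(k)}:=\Ran_G\times_{\Mfld_G}\cdots\times_{\Mfld_G}\Ran_G$ is the $k$-fold fiber product. Its value on a box product is the evident generalization of the binary formula,
\[
\mu^k_!(F_1\boxtimes\cdots\boxtimes F_k)(U_I\subseteq M)\simeq\bigoplus_{I_1\cup\cdots\cup I_k=I}F_1(U_{I_1}\subseteq M)\otimes\cdots\otimes F_k(U_{I_k}\subseteq M).
\]
Associativity of set-theoretic union furnishes canonical equivalences $\mu^k_!\simeq\mu_!\circ(\mu^{k-1}_!\boxtimes\id)$, commutativity furnishes $\Sigma_k$-equivariance, and the object $\varnothing\subseteq M$ acts as a two-sided unit. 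Packaging these coherences as a coCartesian fibration over $\Fin_*$ following the machinery of \cite[2.4]{Lurie:HA} yields the symmetric monoidal structure $\otimes^\cup$, with the claimed binary formula recovered as the $k=2$ case.

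For the lax comparison, observe that $(F\otimes^\amalg G)(U_I)\simeq \bigoplus_{I_1\sqcup I_2=I}F(U_{I_1})\otimes G(U_{I_2})$ is a summand of $(F\otimes^\cup G)(U_I)\simeq\bigoplus_{I_1\cup I_2=I}F(U_{I_1})\otimes G(U_{I_2})$, namely the summand indexed by partitions with $I_1\cap I_2=\varnothing$. Since $\C$ is stable and presentable, direct sums coincide with direct products, so projection onto this summand---annihilating every term with $I_1\cap I_2\neq\varnothing$---defines a natural pointwise map $\otimes^\cup\to\otimes^\amalg$. To promote it to a lax monoidal structure on the identity, I would exhibit the projection as the image under a morphism of $\infty$-operads of the comparison between iterated ``disjoint union'' and iterated ``union,'' so that the higher coherences are encoded at the operadic level.

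The main obstacle is precisely this higher coherence: verifying that $\mu$ is coherently commutative, associative, and unital at the $\infty$-categorical level (not merely at the level of two-fold products) and that the projection respects the entire symmetric monoidal structure rather than only its binary part. I would address this by realizing both $\otimes^\cup$ and $\otimes^\amalg$ as parallel coCartesian fibrations over $\Fin_*$ and producing the comparison as a single morphism of such fibrations, so that all coherence follows from one universal construction rather than being checked diagram by diagram.
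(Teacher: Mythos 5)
Your proposal inverts the order of operations in the paper and, as a result, runs head-on into the $\infty$-categorical coherence problem that you correctly flag as ``the main obstacle'' but do not actually resolve. Proposing to ``realize both $\otimes^\cup$ and $\otimes^\amalg$ as parallel coCartesian fibrations over $\Fin_*$'' is precisely the problem restated, not a solution: constructing such a coCartesian fibration with the right fibers and coCartesian edges \emph{is} constructing the symmetric monoidal structure, and nothing in your outline supplies the required homotopy-coherent data. Moreover, there is a specific obstruction to the naive picture you invoke: there is \emph{no} functor $\mu:\Ran_G\times_{\Mfld_G}\Ran_G\to\Ran_G$, as the paper notes explicitly in \S\ref{sec:push pull section}, because the union $U_I\cup V_J$ of two tuples of Euclidean neighborhoods need not again be a tuple of Euclidean neighborhoods unless the pair is matched. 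The pushforward $\mu_!$ is only defined via the detour through the full subcategory $(\Ran_G\times_{\Mfld_G}\Ran_G)_\#$ and Lemma \ref{lem:matched extension}, which further complicates any direct associativity or $\Sigma_k$-equivariance argument. So the phrase ``pushforward along the coherently commutative, associative, and unital union operation'' describes a functor that does not exist.

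The paper circumvents all of this by never trying to manufacture coherence by hand. Instead it introduces the ordinary (1-)category $\widetilde\D_G$ of \emph{overlapping disks} (Definition \ref{overlapping disks}), whose morphisms incorporate $J$-covers of $I$ and so allow disks to ``split apart,'' together with the corresponding category $\widetilde\Ran_G$. Because $\widetilde\D_G$ is strictly symmetric monoidal under disjoint union and satisfies the finiteness hypothesis of Corollary \ref{cor:right day} (the undercategories receive initial functors from the finite sets $\Cov(I,J)$), right Day convolution on $\Fun(\widetilde\D_G,\C)$ exists, and the entire operadic coherence package is delivered by Theorem \ref{thm:glasman} at no extra cost. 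One then identifies $\cShv^\cbl(\widetilde\Ran_G,\C)\simeq\Fun^\loc(\widetilde\D_G,\C)$ (Lemma \ref{lem:cosheaf identification 2}), realizes $\cShv^\cbl(\Ran_G,\C)$ as the full subcategory of cosheaves with disjoint support, checks that this is a compatible localization, and transfers the monoidal structure via \cite[2.2.1.9]{Lurie:HA}. The identification $F\otimes^\cup G\simeq\mu_*(F\boxtimes G)$ is then derived \emph{afterward} (Lemma \ref{lem:overlapping is pushforward}) as a formula for a structure already constructed, rather than serving as its definition. Likewise the lax natural transformation $\otimes^\cup\to\otimes^\amalg$ requires no separate verification of coherences: it is inherited, after localization, from the oplax structure on restriction along the symmetric monoidal inclusion $\D_G\to\widetilde\D_G$. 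Your formula for $\mu^k_!$ and your pointwise description of the projection indexed by disjoint covers are both correct, but they are consequences of the paper's construction, not ingredients from which the construction can be assembled without essentially rediscovering the $\widetilde\D_G$/Day-convolution route.
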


\begin{remark}
This formula should be compared to the description of the star tensor product given in \cite[1.2.1.]{FrancisGaitsgory:CKD}.
\end{remark}

\begin{proposition}\label{prop:overlapping lie}
Let $\C$ be a stable, presentably symmetric monoidal $\infty$-category. There is a commuting diagram \[\xymatrix{\Alg_\Lie(\Mod_G(\C))\ar[d]\ar[rr] &&\Alg_\Lie(\cShv^\cbl(\Ran_G,\C)_\cup)\ar[d]\\
\Mod_G(\C)\ar[r]^-{\Omega^n}&\Mod_G(\C)\simeq \cShv^\loc(\Diag_G,\C)\ar[r]^-{\delta_!}&\cShv^\cbl(\Ran_G,\C)
}\] of $\infty$-categories in which the top functor is fully faithful with essential image the subcategory of Lie algebras supported on the diagonal.
\end{proposition}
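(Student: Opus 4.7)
The plan is to mirror the proof of Corollary \ref{cor:lie model}, with Lemma \ref{lem:base change} playing the role there occupied by Theorem \ref{thm:duality}. My first task would be to enhance
\[\delta^* \colon (\cShv^\cbl(\Ran_G,\C), \otimes^\cup) \longrightarrow (\cShv^\loc(\Diag_G,\C), \otimes^*)\]
to a symmetric monoidal functor. At the level of tensor products, this is immediate from Lemma \ref{lem:base change}: for $F, G \in \cShv^\cbl(\Ran_G,\C)$,
\[\delta^*(F \otimes^\cup G) = \delta^*\mu_!(F \boxtimes G) \simeq \Delta^*(\delta \times \delta)^*(F \boxtimes G) \simeq \delta^* F \otimes^* \delta^* G.\]
Coherently upgrading this pointwise identity to a symmetric monoidal functor structure is the main technical obstacle; I would expect it to be handled in the deferred \S\ref{sec:monoidal section}, most likely by arranging the construction of $\otimes^\cup$ so that $\delta^*$ is symmetric monoidal by design.

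Granted that, the right adjoint $\delta_!$ inherits a lax symmetric monoidal structure, and therefore induces a functor $\Alg_\Lie(\cShv^\loc(\Diag_G,\C)_*) \to \Alg_\Lie(\cShv^\cbl(\Ran_G,\C)_\cup)$. I would then combine this with the symmetric monoidal equivalence
\[(\Mod_G(\C), \otimes) \simeq (\Mod_G(\C), \otimes^*) \simeq (\cShv^\loc(\Diag_G,\C), \otimes^*),\]
where the first map is $\Omega^n$ (symmetric monoidal because $\Sigma^n(M \otimes^* N) = \Sigma^n \Omega^n(\Sigma^n M \otimes \Sigma^n N) \simeq \Sigma^n M \otimes \Sigma^n N$) and the second identification comes from Lemma \ref{lem:verdier}. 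The composite is the candidate top functor, whose underlying functor factors as $\delta_! \Omega^n$ as required by the diagram.

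Fully faithfulness transfers from the cosheaf-level fully faithfulness of $\delta_!$ (Corollary \ref{cor:diagonal support}), since morphisms of Lie algebras are maps of underlying cosheaves compatible with the lax-monoidally transported brackets. The essential image is manifestly contained in the Lie algebras supported on the diagonal. For the converse, I would exploit the reflectivity of the $\delta_!$-image (as the fully faithful right adjoint of $\delta^*$): given a Lie algebra $L$ with underlying cosheaf $\delta_! F$, each operadic structure map $L^{\otimes^\cup k} \to L$ has target in the reflective subcategory, and so must factor through the reflection $\delta_!\delta^* L^{\otimes^\cup k} \simeq \delta_!(F^{\otimes^* k})$ (using the symmetric monoidality of $\delta^*$). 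These factorizations assemble into a Lie algebra structure on $F$ in $(\cShv^\loc(\Diag_G,\C), \otimes^*)$, which transports through the equivalences above into a Lie algebra structure on the corresponding object of $\Mod_G(\C)$ whose image under the constructed functor is $L$.
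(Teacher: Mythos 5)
Your proposal follows essentially the same route as the paper: Lemma \ref{lem:base change} is used to endow $\delta^*$ with a strong symmetric monoidal structure from $\otimes^\cup$ to $\otimes^*$, the $(\delta^*,\delta_!)$-adjunction is then lifted to Lie algebras, and the identification with $\Mod_G(\C)$ and the essential image are read off from Lemma \ref{lem:verdier} and Corollary \ref{cor:diagonal support}. Your concluding ``factor through the reflection'' argument for the essential image, while correct in spirit, is handled more efficiently in the paper by simply observing that the lifted adjunction still has $\delta_!$ fully faithful (via conservativity of the forgetful functor), so that a Lie algebra with underlying cosheaf of diagonal support is already in the image of the unit of the lifted adjunction.
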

\begin{proof}
By Lemma \ref{lem:base change}, the left adjoint in the $(\delta^*,\delta_!)$-adjunction is symmetric monoidal with respect to the monoidal structure on the domain given by $\otimes^\cup$ and the monoidal structure on the codomain given by $\otimes^*$; therefore, the adjunction lifts to an adjunction at the level of $\infty$-categories of Lie algebras. The result now follows from Lemma \ref{lem:verdier} and Corollary \ref{cor:diagonal support}.
\end{proof}

\begin{proof}[Proof of Theorem \ref{thm:adjunction}]
The identity functor preserves the condition of diagonal support; therefore, we obtain a forgetful functor as the dashed factorization in the diagram \[\xymatrix{
\Alg^{\mathrm{nu}}_{\mathbb{E}_G}(\C)\ar@{-->}[dd]\ar@{^{(}->}[rr]^-{(\ref{cor:lie model})} &&\Alg_\Lie(\cShv^\cbl(\Ran_G,\C)_\amalg)\ar[dd]^-{\id}\\\\
\Alg_{\Lie}(\Mod_G(\C))\ar@{^{(}->}[rr]^-{(\ref{prop:overlapping lie})}&&\Alg_\Lie(\cShv^\cbl(\Ran_G,\C)_\cup)
}\] Since each arrow in the diagram preserves limits and filtered colimits, which in each case are underlying, this forgetful functor admits a left adjoint $U_G$. To complete the proof, it suffices to note that the diagram of right adjoints \[\xymatrix{
\Alg^\mathrm{nu}_{\mathbb{E}_G}(\C)\ar[d]\ar[r]&\Alg_{\Lie}(\Mod_G(\C))\ar[d]\\
\Mod_G(\C)\ar[r]^-{\Sigma^n[-1]}&\Mod_G(\C).
}\] commutes.
\end{proof}

Having exhibited the desired forgetful functor, we turn now to the task of making its left adjoint $U_G$ explicit. Since the identity preserves limits and filtered colimits of Lie algebras, it admits a left adjoint, which we denote by $\Ind_\cup^\amalg$. The key result concerning this functor is the following.

\begin{lemma}\label{envelope support}
If $L$ is supported on the diagonal, then so is $\Ind_\cup^\amalg(L)$.
\end{lemma}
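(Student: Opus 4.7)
The plan is to pass to the Koszul dual side of the disjoint structure: by Lemma~\ref{lem:factorization}, showing that $\Ind^\amalg_\cup L$ is supported on the diagonal is equivalent to showing that its Koszul dual cocommutative coalgebra $\overline C^\Lie_\amalg(\Ind^\amalg_\cup L)$ in $\Coalg^\mathrm{nu}_\Com(\cShv_0^\cbl(\Ran_G,\C)_\amalg)$ is factorizable whenever $L$ is supported on the diagonal. This reformulation is attractive because $\overline C^\Lie_\amalg$ detects the finer structure produced by $\Ind^\amalg_\cup$ rather than the underlying object alone, and factorizability is amenable to direct combinatorial verification.

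First I would identify this Koszul dual via an iterated bar-construction calculation. The adjunction $\Ind^\amalg_\cup\dashv\id$ is encoded by the monad morphism $\phi\colon T_\cup\to T_\amalg$ between free Lie algebra monads induced by the lax symmetric monoidal structure on the identity, so $\Ind^\amalg_\cup L\simeq |B(T_\amalg,T_\cup,L)|$. Combining with $\overline C^\Lie_\amalg\simeq B(\id,T_\amalg,-)[-1]$ and the levelwise cancellation $B(\id,T_\amalg,T_\amalg X)\simeq X$, a Fubini argument in the bisimplicial direction would produce a natural equivalence
\[
\overline C^\Lie_\amalg(\Ind^\amalg_\cup L)\simeq |B(\id,T_\cup,L)|[-1],
\]
which one may read as Lie chains computed with respect to the overlapping structure.

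The key computation is then the associated-graded calculation. The natural bar-construction filtration on $|B(\id,T_\cup,L)|$ has associated graded pieces $\Sym^\mathrm{nu}_\cup(L[1])$, and for $L$ supported on the diagonal the explicit formula for $\otimes^\cup$ forces every covering subset in the $\cup$-symmetric power sum to be a singleton; unwinding the $\Sigma_J$-coinvariants orbit-by-orbit according to the composition of $J$ induced by the fiber-size function gives
\[
\Sym^\mathrm{nu}_\cup(L)(U_I)\simeq\bigotimes_{i\in I}\Sym^\mathrm{nu}(L(U_i)).
\]
This is exactly the factorizability condition for $\Sym^\mathrm{nu}_\cup(L[1])$, realized via the natural comultiplication, so the associated graded of $\overline C^\Lie_\amalg(\Ind^\amalg_\cup L)$ is factorizable.

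Finally I would propagate factorizability from the associated graded to the coalgebra itself, using that factorizability is a condition on specific natural maps being equivalences and is therefore preserved under the filtered colimit and successive cofiber extensions comprising $\overline C^\Lie_\amalg(\Ind^\amalg_\cup L)$ in the presentably symmetric monoidal setting. Lemma~\ref{lem:factorization} then delivers the desired diagonal support of $\Ind^\amalg_\cup L$. The main obstacle I anticipate is at the level of coalgebra structure rather than underlying object in the first step: one must ensure that the iterated bar-construction cancellation refines to an equivalence of $\amalg$-coalgebras, so that the factorizability detected on the associated graded is precisely the structure required by Lemma~\ref{lem:factorization}. This is a coherence point best addressed within the $\infty$-operadic formalism of the preceding sections.
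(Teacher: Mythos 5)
Your proof follows essentially the same route as the paper's: reduce via Lemma~\ref{lem:factorization} to factorizability of the Koszul dual coalgebra, identify $\overline{C}^\Lie_\amalg(\Ind^\amalg_\cup L)$ with the overlapping Lie chains $\overline{C}^\Lie_\cup(L)$, pass to the associated graded of the bar filtration, and verify factorizability of $\Sym_\cup(L[1])$ for diagonally supported $L$ via the $\otimes^\cup$ formula (this last computation is the paper's Lemma~\ref{star sym}). The one place where you deviate is that you re-derive the identification $\overline{C}^\Lie_\amalg\circ\Ind^\amalg_\cup\simeq\overline{C}^\Lie_\cup$ by a direct iterated-bar/Fubini argument, whereas the paper packages this as Lemma~\ref{lem:envelope is star}, cited to \cite[6.2.6]{FrancisGaitsgory:CKD}; as you yourself flag, the coherence point---that the cancellation of bar constructions respects the $\Com$-coalgebra structure and not just underlying objects---is precisely what that reference supplies, so your sketch is not self-contained at that step but is not wrong either.
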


The proof, essentially a transcription of \cite[6.4.2]{FrancisGaitsgory:CKD}, is premised on the following calculation in the overlapping monoidal structure:

\begin{lemma}\label{star sym}
Suppose that $F\in\cShv^\cbl(\Ran_G,\C)$ is supported on the diagonal. There is a natural equivalence $$\xymatrix{\Sym_\cup^{k}(F)(U_I\subseteq M)\ar[r]^-\sim&\displaystyle\bigoplus_{\sum_Ik_i=k}\bigotimes_{I}\Sym^{k_i}(F(U_i\subseteq M)).}$$ 
\end{lemma}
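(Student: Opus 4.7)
The plan is to first unpack $\Sym^k_\cup(F) = (F^{\otimes^\cup k})_{\Sigma_k}$ by iterating the formula of Proposition \ref{prop:overlapping exists}, then use the support assumption on $F$ to reduce the resulting sum to one indexed by surjections, and finally compute the $\Sigma_k$-coinvariants by partitioning into orbits.

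First, I would establish the $k$-fold version of the formula
\[
F^{\otimes^\cup k}(U_I\subseteq M)\simeq \bigoplus_{I_1\cup\cdots\cup I_k=I} \bigotimes_{j=1}^k F(U_{I_j}\subseteq M),
\]
naturally in $F$ and in $U_I\subseteq M$. This follows by induction from the formula $F\otimes^\cup G\simeq \mu_!(F\boxtimes G)$ of Proposition \ref{prop:overlapping exists} together with the stalk formula for $\mu_!$ recorded in \S\ref{sec:push pull section}, with the usual fact that the tensor product in $\C$ preserves colimits in each variable handling the commutation of $\otimes$ past the direct sums.

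Next, I would apply the hypothesis. Since $F$ is supported on the diagonal, the term corresponding to a tuple $(I_1,\ldots,I_k)$ vanishes unless $|I_j|=1$ for every $j$; in particular $F$ is automatically reduced, so empty $I_j$'s also drop out. The surviving tuples are precisely the surjections $f:[k]\twoheadrightarrow I$ recording $I_j=\{f(j)\}$, so
\[
F^{\otimes^\cup k}(U_I\subseteq M)\simeq \bigoplus_{f:[k]\twoheadrightarrow I}\bigotimes_{j=1}^k F(U_{f(j)}\subseteq M)\simeq \bigoplus_{f:[k]\twoheadrightarrow I}\bigotimes_{i\in I} F(U_i\subseteq M)^{\otimes |f^{-1}(i)|}.
\]
The $\Sigma_k$-action on the left is by the usual symmetric group action on tensor powers; tracing through the equivalence with the formula for $\mu_!$ (which is natural in $F$ for morphisms in the product Ran category permuting the factors), the induced action on the right permutes summands through the $\Sigma_k$-action on surjections $f$ by precomposition, and within each summand permutes tensor factors accordingly.

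For the last step, I would partition surjections into $\Sigma_k$-orbits. An orbit is determined by the multiplicity tuple $k_\bullet:I\to\mathbb{N}_{\geq 1}$ given by $k_i=|f^{-1}(i)|$, which satisfies $\sum_{i\in I}k_i=k$, and the stabilizer of a chosen representative is the Young subgroup $\prod_{i\in I}\Sigma_{k_i}\subseteq \Sigma_k$ acting within each fiber. Applying the standard coinvariant identity for an induced action then gives
\[
\Sym^k_\cup(F)(U_I\subseteq M)\simeq \bigoplus_{\sum_I k_i=k}\bigotimes_{i\in I}\bigl(F(U_i\subseteq M)^{\otimes k_i}\bigr)_{\Sigma_{k_i}}\simeq \bigoplus_{\sum_I k_i=k}\bigotimes_{i\in I}\Sym^{k_i}(F(U_i\subseteq M)),
\]
where the $k_i\geq 1$ constraint is the same as the convention that $\Sym^0$ contributes a unit factor only from the empty subset of indices. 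The main technical point to watch is keeping the $\Sigma_k$-symmetries coherent through the iterated application of $\mu_!$ and through the passage to coinvariants; this is routine in each step but requires some care for the naturality clause of the conclusion.
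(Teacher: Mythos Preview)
Your proposal is correct and follows essentially the same route as the paper: expand $F^{\otimes^\cup k}(U_I\subseteq M)$ as a sum over $k$-covers of $I$, use diagonal support to cut down to covers by singletons (equivalently, surjections $[k]\twoheadrightarrow I$), identify the resulting $\Sigma_k$-module as induced from the Young subgroups $\prod_I\Sigma_{k_i}$, and pass to coinvariants. The only cosmetic difference is that the paper invokes the $k$-ary overlapping Day convolution formula $\bigotimes_J^\cup F_j(U_I)\simeq\bigoplus_{\Cov(I,J)}\bigotimes_JF_j(U_{S_j})$ directly rather than iterating the binary $\mu_!$ formula, and your closing remark about $\Sym^0$ is unnecessary since the surjection indexing already forces $k_i\geq1$.
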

\begin{proof}
We have the $\Sigma_k$-equivariant equivalence $$\xymatrix{F^{\otimes^\cup k}(U_I\subseteq M)\ar[r]^-\sim&\displaystyle\bigoplus_{\Cov(I, k)}\bigotimes_{j=1}^kF(U_{S_j}\subseteq M)}.$$ Since $F$ is supported on the diagonal, the summand corresponding to a $k$-cover $S$ vanishes if $|S_j|\neq 1$ for any $1\leq j\leq k$. But the set of $k$-covers $S$ of $I$ with $|S_j|=1$ for all $j$ is put in $\Sigma_k$-equivariant bijection with $\Surj(\{1,\ldots,k\},I)$ by sending $j$ to the unique element in $S_j$, so we may write $$\xymatrix{F^{\otimes^\cup k}(U_I\subseteq M)\ar[r]^-\sim&\displaystyle\bigoplus_{\Surj(\{1,\ldots, k\},I)}\bigotimes_IF(U_i\subseteq M)^{\otimes \pi^{-1}(i)}}=\bigoplus_{\sum_Ik_i=k}\Ind_{\prod_I\Sigma_{k_i}}^{\Sigma_k}\bigg(\bigotimes_IF(U_i\subseteq M)^{\otimes k_i}\bigg).$$ Passing to $\Sigma_k$-coinvariants yields the claim.
\end{proof}

We also make use of the following result concerning the interaction of Lie chains with the induction functor, which is an immediate corollary of \cite[6.2.6]{FrancisGaitsgory:CKD}.

\begin{lemma}\label{lem:envelope is star}
The diagram $$\xymatrix{
\Alg_\Lie(\cShv^\cbl(\Ran_G,\C)_\cup)\ar[d]_-{ C_\cup^\Lie}\ar[r]^-{\Ind_\cup^\amalg}&\Alg_\Lie(\cShv^\cbl(\Ran_G,\C)_\amalg)\ar[d]^-{ C^\Lie_\amalg}\\
\Coalg_{\Com}(\cShv^\cbl(\Ran_G,\C)_\cup)\ar[r]^-{\id}&\Coalg_{\Com}(\cShv^\cbl(\Ran_G,\C)_\amalg)}$$ commutes.
\end{lemma}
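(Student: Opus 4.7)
The square encodes the compatibility of Chevalley--Eilenberg chains with a change of symmetric monoidal structure along the lax/oplax comparison of Proposition \ref{prop:overlapping exists}, and the plan is to deduce it formally from \cite[6.2.6]{FrancisGaitsgory:CKD}. First, I interpret the natural transformation $\otimes^\cup \to \otimes^\amalg$ as equipping the identity functor on $\cShv^\cbl(\Ran_G,\C)$ with a lax symmetric monoidal structure from $(\cShv,\otimes^\amalg)$ to $(\cShv,\otimes^\cup)$; equivalently, it gives an oplax symmetric monoidal structure from $(\cShv,\otimes^\cup)$ to $(\cShv,\otimes^\amalg)$. The lax structure induces a functor $\id\colon \Alg_\Lie(\amalg)\to \Alg_\Lie(\cup)$ (transferring a bracket $L\otimes^\amalg L\to L$ to $L\otimes^\cup L\to L\otimes^\amalg L\to L$), whose left adjoint is by definition $\Ind_\cup^\amalg$; the oplax structure induces the bottom arrow $\id\colon \Coalg_\Com(\cup)\to \Coalg_\Com(\amalg)$ (transferring a coproduct $C\to C\otimes^\cup C$ to $C\to C\otimes^\cup C\to C\otimes^\amalg C$).

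Second, I apply \cite[6.2.6]{FrancisGaitsgory:CKD}. Read abstractly, that result asserts that for a lax symmetric monoidal functor $\Phi\colon \V\to \W$ between suitable pro-nilpotent stable presentable symmetric monoidal $\infty$-categories, there is a natural equivalence
\[
C^\Lie_\V(L_\Phi L)\;\simeq\; \Phi^{\mathrm{oplax}}(C^\Lie_\W L)
\]
in $\Coalg_\Com(\V)$, where $L_\Phi$ is the left adjoint to the induced functor $\Alg_\Lie(\V)\to \Alg_\Lie(\W)$ and $\Phi^{\mathrm{oplax}}\colon \Coalg_\Com(\W)\to \Coalg_\Com(\V)$ is induced by the oplax structure. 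Specializing to $\Phi=\id\colon (\cShv,\otimes^\amalg)\to (\cShv,\otimes^\cup)$ identifies $L_\Phi$ with $\Ind_\cup^\amalg$ and $\Phi^{\mathrm{oplax}}$ with the bottom arrow of the square, giving exactly the stated commutativity.

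Third, to legitimately invoke \cite[6.2.6]{FrancisGaitsgory:CKD} I verify the hypotheses. Pro-nilpotence of $\otimes^\amalg$ is Proposition \ref{prop:chiral is nilpotent}; pro-nilpotence of $\otimes^\cup$ follows by the same filtration argument applied to the formula $\mu_!(F\boxtimes G)(U_I\subseteq M)\simeq \bigoplus_{I_1\cup I_2=I} F(U_{I_1}\subseteq M)\otimes G(U_{I_2}\subseteq M)$, which shows that tensoring with a cosheaf trivial on $\Ran_{G,k-1}$ strictly decreases cardinality on the surviving summands. The norm-equivalence hypothesis of Theorem \ref{thm:duality} holds in both monoidal structures by the free $\Sigma_J$-action argument already used in the proof of Corollary \ref{cor:lie model}. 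The main obstacle, accordingly, is purely one of translation: matching the chiral/star language of \cite{FrancisGaitsgory:CKD} with our disjoint/overlapping setting (with $\otimes^\amalg$ playing the role of $\otimes^{\mathrm{ch}}$ and $\otimes^\cup$ the role of $\otimes^\star$), after which the result is, as asserted, immediate.
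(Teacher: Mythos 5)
Your overall strategy coincides with the paper's, which offers no proof beyond the assertion that the lemma is ``an immediate corollary of [FG, 6.2.6]''; your unpacking of the dictionary---the natural transformation $\otimes^\cup\to\otimes^\amalg$ as a lax structure on $\id\colon(\cShv,\otimes^\amalg)\to(\cShv,\otimes^\cup)$ and an oplax structure in the other direction, the identification of $\Ind_\cup^\amalg$ as the left adjoint of the induced functor on Lie algebras and of the bottom arrow as the induced functor on coalgebras---is correct and is exactly the intended reading.

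The problem is your third paragraph, which contains two false assertions. First, $\otimes^\cup$ is \emph{not} pro-nilpotent: in the formula $\mu_!(F\boxtimes G)(U_I\subseteq M)\simeq\bigoplus_{I_1\cup I_2=I}F(U_{I_1}\subseteq M)\otimes G(U_{I_2}\subseteq M)$ the union is not required to be disjoint, so the summand with $I_1=I_2=I$ survives and cardinality does not strictly decrease; this is precisely the point of difference from $\otimes^\amalg$ (where $I_1\amalg I_2=I$ with both parts nonempty forces $|I_1|<|I|$), and it mirrors the fact that the star product of Francis--Gaitsgory is not pro-nilpotent---the entire reason two monoidal structures are needed and why Koszul duality is an equivalence only on the disjoint side. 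Second, $\Sigma_J$ does \emph{not} act freely on $\Cov(I,J)$: the constant cover $S_j=I$ for all $j$ is fixed by every permutation, so the norm-map argument from Corollary \ref{cor:lie model} does not transfer to the overlapping structure. Fortunately neither claim is needed: the hypotheses you are verifying are those of Theorem \ref{thm:duality} (which makes $\overline C^\Lie$ an \emph{equivalence}), whereas \cite[6.2.6]{FrancisGaitsgory:CKD} is a bar-construction compatibility between $C^\Lie$ and induction that holds without asking $C^\Lie_\cup$ to be an equivalence---indeed it is applied in loc.\ cit.\ to the non-pro-nilpotent star structure. You should delete the hypothesis-verification step rather than attempt it; as written, your proof asserts two statements that are false.
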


\begin{proof}[Proof of Proposition \ref{envelope support}]
By Lemma \ref{lem:factorization}, it suffices to show that $C_\amalg^\Lie(\Ind_\cup^\amalg(L))$ is factorizable. By Lemma \ref{lem:envelope is star}, we have $C_\amalg^\Lie(\Ind_\cup^\amalg(L))\simeq C_\cup^\Lie(L)$; therefore, passing to the graded coalgebra associated to the filtration of Theorem \ref{thm:duality}, applied in the overlapping monoidal structure, it will suffice to show that $\Sym_\cup(L[1])$ is factorizable when $L$ is supported on the diagonal. But by Lemma \ref{star sym}, \begin{align*}\Sym_\cup(L[1])(U_I\subseteq M)&\simeq \bigoplus_{k\geq1}\bigoplus_{\sum_Ik_i=k}\bigotimes_{I}\Sym^{k_i}(L(U_i\subseteq M)[1])\\&\simeq \bigotimes_I\Sym(L(U_i\subseteq M)[1])\\&\simeq \bigotimes_I\Sym_\cup(L[1])(U_i\subseteq M),
\end{align*} as desired.
\end{proof}

\begin{corollary}\label{cor:adjoint identification}
For a Lie algebra $L$ in $\Mod_G(\C)$, $U_G(L)$ is the nonunital $\mathbb{E}_G$-algebra corresponding to $C_\amalg^\Lie(\Ind_\cup^\amalg(\delta_!L_c)).$
\end{corollary}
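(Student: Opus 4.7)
The strategy is to identify $U_G$ by composing left adjoints to the three stages of the forgetful functor constructed in the proof of Theorem \ref{thm:adjunction}. Recall that this forgetful functor is the dashed composite
\[
\Alg^{\mathrm{nu}}_{\mathbb{E}_G}(\C) \xrightarrow{\iota_1} \Alg_\Lie(\cShv^\cbl(\Ran_G,\C)_\amalg) \xrightarrow{I} \Alg_\Lie(\cShv^\cbl(\Ran_G,\C)_\cup) \xleftarrow{\iota_2} \Alg_\Lie(\Mod_G(\C)),
\]
where $\iota_1$ and $\iota_2$ are the fully faithful embeddings of Corollary \ref{cor:lie model} and Proposition \ref{prop:overlapping lie}, each with essential image the Lie algebras supported on the diagonal, and $I$ is the functor induced by the lax monoidal identity of Proposition \ref{prop:overlapping exists}. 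Since $I$ manifestly preserves diagonal support, the composite $I\circ\iota_1$ lands in the essential image of $\iota_2$, so the abbreviation $F:=\iota_2^{-1}\circ I\circ\iota_1$ makes sense.

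First I would introduce the candidate $V:=\iota_1^{-1}\circ \Ind_\cup^\amalg\circ\iota_2$. The issue of well-definedness is precisely what Lemma \ref{envelope support} addresses: since $\iota_2(L)\simeq \delta_!L_c$ is supported on the diagonal by Proposition \ref{prop:overlapping lie}, so too is $\Ind_\cup^\amalg(\delta_!L_c)$, placing it in the essential image of $\iota_1$. I would then verify $V\dashv F$ by a mapping-space computation:
\[
\Map(V(L),A)\simeq \Map(\iota_1 V(L),\iota_1 A)\simeq \Map(\Ind_\cup^\amalg\iota_2 L,\iota_1 A)\simeq \Map(\iota_2 L, I\iota_1 A)\simeq \Map(L,F(A)),
\]
combining full faithfulness of $\iota_1$, the definition of $V$, the adjunction $\Ind_\cup^\amalg\dashv I$, and the equivalence $\iota_2\dashv \iota_2^{-1}$ on the essential image of $\iota_2$ (valid because $I\iota_1 A$ is supported on the diagonal). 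Uniqueness of adjoints then forces $V\simeq U_G$.

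It remains to translate $V(L)=\iota_1^{-1}(\Ind_\cup^\amalg(\delta_!L_c))$ into the language of the statement. Tracing back through the proof of Corollary \ref{cor:lie model}, the inverse $\iota_1^{-1}$ on Lie algebras supported on the diagonal is the composite of the Koszul duality equivalence of Theorem \ref{thm:duality} (realized by $\overline{C}_\amalg^\Lie$) with the equivalence of Proposition \ref{prop:factorizable 2} between factorizable cocommutative coalgebras and nonunital $\mathbb{E}_G$-algebras. Since the convention $C_\amalg^\Lie=\overline{C}_\amalg^\Lie\oplus 1$ matches exactly the counitalization step appearing in the proof of Proposition \ref{prop:factorizable 2}, the nonunital $\mathbb{E}_G$-algebra $\iota_1^{-1}(\Ind_\cup^\amalg(\delta_!L_c))$ is the one corresponding to the counital factorizable coalgebra $C_\amalg^\Lie(\Ind_\cup^\amalg(\delta_!L_c))$, as claimed. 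The only nonformal input is Lemma \ref{envelope support}; everything else is the unpacking of adjunctions and of previously established equivalences.
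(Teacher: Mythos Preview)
Your proposal is correct and follows exactly the approach the paper intends: the corollary is stated without proof in the paper, as it is meant to be an immediate consequence of Lemma \ref{envelope support} together with the construction of the forgetful functor in the proof of Theorem \ref{thm:adjunction}, and you have simply made that reasoning explicit by composing left adjoints through the two fully faithful embeddings and the $(\Ind_\cup^\amalg,\id)$-adjunction. Your unpacking of $\iota_1^{-1}$ via Theorem \ref{thm:duality} and Proposition \ref{prop:factorizable 2}, including the identification of the counitalization with $C_\amalg^\Lie=\overline C_\amalg^\Lie\oplus 1$, is exactly right.
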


Now, Lemma \ref{lem:preserves factorizability} endows $\pi_!\simeq \jmath_!\epsilon_!$ with a symmetric monoidal structure preserving factorizability, and we have the following commuting diagram:

\[
\xymatrix{
\Alg^\mathrm{nu}_{\mathbb{E}_G}(\C)\ar[d]_-{\wr}\ar[r]^-{\int}& \Fun^\otimes(\Mfld_G,\C)\ar@{^{(}->}[dd]\\
\Fact(\cShv^\cbl_0(\Ran_G,\C))\ar@{^{(}->}[d]_-{(-)\oplus 1}\\
\Coalg_\Com(\cShv^\cbl(\Ran_G,\C)_\amalg)\ar[r]^-{\pi_!}&\Coalg_\Com(\Fun(\Mfld_G,\C)_\mathrm{RD})
}
\]

\begin{bigtheorem}\label{thm:PBW}
Let $L$ be a Lie algebra in $\Mod_G(\C)$. There is a natural equivalence of augmented $\mathbb{E}_G$-algebras \[U_G(L)\oplus 1_\C\simeq C^\Lie(\Omega^n L).\]
\end{bigtheorem}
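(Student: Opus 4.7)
The plan is to identify both sides by direct computation of the factorizable cocommutative coalgebra corresponding to $U_G(L)\oplus 1_\C$ under the equivalence of Proposition \ref{prop:factorizable 2}, evaluated at the single-disk object $\mathbb{R}^n\subseteq\mathbb{R}^n$ of $\Ran_G$. By Corollary \ref{cor:adjoint identification} together with Lemma \ref{lem:envelope is star}, this factorizable coalgebra is the counitalization $F:=\overline{C}_\cup^\Lie(\delta_!L_c)\oplus 1_\C$, regarded as a cocommutative coalgebra in the disjoint monoidal structure on $\cShv^\cbl(\Ran_G,\C)$.

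Next, tracing back through Proposition \ref{prop:discrete model}, Corollary \ref{cor:factorizable 1}, and Proposition \ref{prop:factorizable 2}, one recovers the underlying object of the augmented $\mathbb{E}_G$-algebra from $F$ by evaluating the associated symmetric monoidal functor on the single-disk object $\mathbb{R}^n\in\D_G$; by Lemma \ref{lem:initial object}, this coincides with $F(\mathbb{R}^n\subseteq\mathbb{R}^n)$. It therefore suffices to produce a natural equivalence
\[\overline{C}_\cup^\Lie(\delta_!L_c)(\mathbb{R}^n\subseteq\mathbb{R}^n) \simeq \overline{C}^\Lie(\Omega^nL).\]

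The key observation is that evaluation at the single-disk object $\mathbb{R}^n\subseteq\mathbb{R}^n$ defines a symmetric monoidal functor $\cShv^\cbl_0(\Ran_G,\C)\to\C$ from the overlapping monoidal structure to the ordinary tensor product on $\C$. Indeed, by the formula of Proposition \ref{prop:overlapping exists},
\[(F_1\otimes^\cup F_2)(\mathbb{R}^n\subseteq\mathbb{R}^n) \simeq \bigoplus_{I_1\cup I_2=\{\ast\}} F_1(U_{I_1}\subseteq\mathbb{R}^n)\otimes F_2(U_{I_2}\subseteq\mathbb{R}^n),\]
and reducedness forces all summands but the one with $I_1=I_2=\{\ast\}$ to vanish, using that $F(\varnothing\subseteq\mathbb{R}^n)\simeq F(\varnothing\subseteq\varnothing)\simeq 0$ by constructibility and reduction. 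Consequently, evaluation commutes with the monadic bar construction computing Lie chains, yielding
\[\overline{C}_\cup^\Lie(\delta_!L_c)(\mathbb{R}^n\subseteq\mathbb{R}^n)\simeq \overline{C}^\Lie\bigl(\delta_!L_c(\mathbb{R}^n\subseteq\mathbb{R}^n)\bigr)\simeq \overline{C}^\Lie(\Omega^nL),\]
where the final identification uses Proposition \ref{prop:overlapping lie}, which identifies $L_c$ with the locally constant cosheaf associated to $\Omega^nL$, together with the fact that $\delta_!$ restores the stalk on objects supported on the diagonal.

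The main obstacle is to upgrade the pointwise symmetric monoidal property of evaluation at $\mathbb{R}^n\subseteq\mathbb{R}^n$ to a genuine compatibility with the full structure computing Lie chains, so that the equivalence of underlying objects lifts to an equivalence of augmented $\mathbb{E}_G$-algebras (and not merely their underlying $G$-modules). This requires tracing the coalgebra and $\mathbb{E}_G$-algebra structures carefully through each of the equivalences invoked, and in particular verifying that the augmentation matches the unit $1_\C$ introduced by counitalization.
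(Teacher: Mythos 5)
Your approach and the paper's agree in their first moves (Corollary \ref{cor:adjoint identification} and Lemma \ref{lem:envelope is star} identify $U_G(L)$ with $\overline{C}^\Lie_\cup(\delta_!L_c)$), but they diverge from there. You stay local: you observe that evaluation at the single-disk object $\mathbb{R}^n\subseteq\mathbb{R}^n$ is symmetric monoidal for the overlapping structure on $\cShv^\cbl_0(\Ran_G,\C)$, use this to pull $\overline{C}^\Lie_\cup$ through evaluation, and so identify the underlying $G$-module. The paper instead globalizes: it passes to factorization homology $\int U_G(L)\in\Fun^\otimes(\Mfld_G,\C)$, uses Lemma \ref{lem:globalization chains} to commute $\pi_!$ past Lie chains, obtaining $\int U_G(L)\simeq C^\Lie(\pi_!\delta_!L_c)$, and then restricts to $\D_G$ and applies Lemma \ref{lem:verdier}. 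Your computation of the overlapping tensor product evaluated at a singleton is correct — the covers with an empty part contribute zero by reducedness plus constructibility — and that observation is genuinely a one-point shadow of what the symmetric monoidality of $\pi_!$ (Lemmas \ref{lem:global sections} and \ref{lem:overlapping is pushforward}) provides.

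However, the gap you flag at the end is real and cannot be filled within your framework. The theorem asserts an equivalence of augmented $\mathbb{E}_G$-algebras, and the $\mathbb{E}_G$-algebra structure corresponds, via Proposition \ref{prop:factorizable 2}, to the cocommutative coalgebra structure on $\overline{C}^\Lie_\cup(\delta_!L_c)$ in the \emph{disjoint} monoidal structure together with the functoriality over all of $\D_G$. Evaluation at $\mathbb{R}^n\subseteq\mathbb{R}^n$ is emphatically \emph{not} monoidal for $\otimes^\amalg$ — for a reduced $F$, one has $(F_1\otimes^\amalg F_2)(\mathbb{R}^n\subseteq\mathbb{R}^n)\simeq 0$ since there are no surjections from a singleton onto a two-element set — and it does not see the morphisms of $\D_G$ that encode the multiplication of the $\mathbb{E}_G$-algebra. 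Nor can you repair this by evaluating at larger $U_I$: for $|I|\geq 2$ the overlapping tensor product $(F_1\otimes^\cup F_2)(U_I\subseteq M)$ has many summands beyond $F_1(U_I)\otimes F_2(U_I)$, so evaluation at $U_I$ is no longer monoidal for $\otimes^\cup$ and the commuting-with-bar-construction step breaks. The paper sidesteps this by never discarding the global functoriality: $\pi_!$ lands in symmetric monoidal functors on $\Mfld_G$, so the equivalence produced is automatically one of $\mathbb{E}_G$-algebras upon restriction. To complete your argument you would essentially need to reintroduce Lemma \ref{lem:globalization chains}, at which point you have recovered the paper's proof.
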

\begin{proof}
We have the sequence of equivalences
\begin{align*}\int U_G(L)&\simeq \pi_!C^\Lie_\amalg(\Ind_\cup^\amalg(\delta_!L_c))\quad&(\ref{cor:adjoint identification})\\
&\simeq \pi_!C^\Lie_\cup(\delta_!L_c)\quad&(\ref{lem:envelope is star})\\
&\simeq C^\Lie(\pi_!\delta_! L_c).\quad&(\ref{lem:globalization chains})
\end{align*}

Restricting to $\D_G$ and invoking Lemma \ref{lem:verdier}, we obtain the desired equivalence \[U_G(L)\oplus 1_\C\simeq C^\Lie(\Omega^nL).\]
\end{proof}

\subsection{Application to configuration spaces}\label{sec:configurations section}

We now take up the proof of Theorem \ref{thm:configurations}. For this, we specialize to the case $G=\Top(n)$ and take $\C=\Fun(\Sigma,\mathcal{S}\mathrm{p})$ to be the $\infty$-category of symmetric sequences in spectra equipped with the (left) Day convolution monoidal structure \[(X\otimes Y)_k\simeq \bigoplus_{i+j=k}\Sigma_k\wedge_{\Sigma_i\times\Sigma_j}X_i\wedge Y_j.\] We consider spectra as symmetric sequences concentrated in arity 1 and write $\mathbb{S}$ for the sphere spectrum.

We will be interested in the symmetric sequence $\Sigma_+^\infty\Conf_\bullet(M)$ of ordered configuration spaces of $M$. This object may be interpreted in terms of factorization homology:

\begin{lemma}\label{lem:free}
There is a natural equivalence \[\Sigma_+^\infty\Conf_\bullet(M)\simeq \int_M\mathbb{E}_G^\mathrm{nu}(\mathbb{S})\]
\end{lemma}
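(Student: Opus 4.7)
The plan is to combine the standard formula for factorization homology of a free $\mathbb{E}_G$-algebra with an explicit evaluation of the $k$-fold Day convolution of $\mathbb{S}$ in symmetric sequences.

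First, I would establish the classical identification
\[
\int_M \mathbb{E}_G^{\mathrm{nu}}(V) \simeq \bigoplus_{k\geq 1}\bigl(\Emb^G(\amalg_k\mathbb{R}^n, M) \otimes V^{\otimes k}\bigr)_{h\Sigma_k},
\]
valid for any $G$-module $V\in \Mod_G(\C)$. This follows from the colimit description of $\int_M=\jmath_!$ together with the explicit formula for the free nonunital $\mathbb{E}_G$-algebra on a disk as $\bigoplus_{k\ge 1}(\mathbb{E}_G(k)\otimes V^{\otimes k})_{h\Sigma_k}$, after recognizing the relevant colimit of disk-embedding spaces over $\D_{G/M}$ as the $G$-framed configuration space of $M$. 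Specializing to $G = \Top(n)$, the map $BG\to B\Top(n)$ is the identity, so every $n$-manifold is canonically $\Top(n)$-framed, and the embedding space reduces, by evaluation at origins, to $\Conf_k(M)$.

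Second, I would compute $\mathbb{S}^{\otimes k}$ in the Day convolution on $\Fun(\Sigma,\Sp)$, where $\mathbb{S}$ is concentrated in arity $1$. By the formula displayed just above the lemma, $(\mathbb{S}^{\otimes k})_j\simeq 0$ for $j\neq k$, while
\[
(\mathbb{S}^{\otimes k})_k \simeq (\Sigma_k)_+\wedge \mathbb{S}
\]
carries two commuting $\Sigma_k$-actions: the internal one from the Day-convolution symmetry, and the external one arising from the symmetric-sequence structure. Assembling, the $k$-th summand of $\int_M\mathbb{E}_G^{\mathrm{nu}}(\mathbb{S})$ contributes only at arity $k$, where it equals $(\Conf_k(M)_+\wedge (\Sigma_k)_+)_{h\Sigma_k}$ with the homotopy quotient taken with respect to the diagonal internal $\Sigma_k$-action. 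Since this action is free on the $(\Sigma_k)_+$-factor, the quotient collapses to $\Sigma_+^\infty\Conf_k(M)$, and the surviving external $\Sigma_k$-action matches, under the induced trivialization, the usual permutation action on configurations; naturality in $M$ is built in at every stage.

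The main obstacle is justifying the free-algebra formula in the first step with careful attention to $G$- and $\Sigma_k$-equivariance in the colimit presentation of $\jmath_!$, which is well known but delicate in the $G$-framed setting; once this is in hand, the Day-convolution computation and the collapse of symmetric-group actions are routine bookkeeping.
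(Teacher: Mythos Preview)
Your proposal is correct and follows essentially the same approach as the paper's proof: the paper invokes \cite[5.5]{AyalaFrancis:FHTM} for the free-algebra formula for factorization homology (your first step) and then performs the Day-convolution calculation $\mathbb{S}^{\otimes k}\simeq(\Sigma_k)_+\wedge\mathbb{S}$ concentrated in arity~$k$ (your second step). You simply spell out in more detail what the cited result provides and carry through the $\Sigma_k$-equivariance bookkeeping explicitly.
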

\begin{proof}
The claim is immediate from the argument of \cite[5.5]{AyalaFrancis:FHTM} and the easy calculation that $\mathbb{S}^{\otimes k}$ is $\Sigma_k\wedge \mathbb{S}$, thought of as a symmetric sequence concentrated in arity $k$.
\end{proof}

\begin{proof}[Proof of Theorem \ref{thm:configurations}]
We have the natural equivalences
\begin{align*}
\Sigma_+^\infty\Conf_\bullet(M)&\simeq\int_M\mathbb{E}^\mathrm{nu}_G(\mathbb{S})&\qquad (\ref{lem:free}) \\
&\simeq \int_MU_G(\Lie(\Sigma^n\mathbb{S}^{-1}))&\quad (\ref{thm:adjunction})\\
&\simeq \int_M C^\Lie(\Omega^n\Lie(\Sigma^n\mathbb{S}^{-1}))&\quad (\ref{thm:PBW})\\
&\simeq C^\Lie(\Map^{\Top(n)}(\mathrm{Fr}_{M^+}, \Lie(\Sigma^n \mathbb{S}^{-1})))&\quad \text{\cite[5.13]{AyalaFrancis:FHTM}},
\end{align*} where $\Fr$ denotes the $\Top(n)$-bundle associated to the microtangent bundle.

Thus, it will suffice to show that this spectral Lie algebra depends only on the homotopy type of $M^+$. For this, we observe that the action of $\Top(n)$ on $\Sigma^n\mathbb{S}^{-1}$ factors through the stable $J$-homomorphism \[\Top(n)\to \mathrm{hAut}(\mathbb{S}),\] so that, for any $N\geq n$, we have \[\Map^{\Top(n)}(\mathrm{Fr}_{M^+}, \Lie(\Sigma^n \mathbb{S}^{-1}))\simeq \Map^{\mathrm{hAut}(S^N)}(\Fr_{M^+}\times_{\Top(n)}\mathrm{hAut}(S^N), \Lie(\mathbb{S}^{n-1})),\] and the claim follows, since the stable spherical fibration associated to the tangent microbundle is a homotopy invariant by Atiyah duality (see \cite{Atiyah:TC} in the smooth case).
\end{proof}

\begin{remark}
In unpublished work, John Francis gives an alternate proof of this result using Goodwillie calculus.
\end{remark}

\section{Functoriality}\label{sec:functoriality section}In this section, we define the pushforward and pullback functors discussed in \S\ref{sec:push pull section}, and we study their interactions.

\subsection{The main diagonal}\label{sec:main diagonal} We investigate functoriality arising from $\delta:\Diag_G\to \Ran_G$, beginning with the left Kan extension $\delta_!$. From the definitions, it is clear that the only arrows in $\Ran_G$ with source lying in $\Diag_G$ also have target lying in $\Diag_G$, so we have the following calculation:

\begin{lemma}\label{lem:diagonal push formula 1}
Let $F:\Diag_G\to \C$ be any functor. There is a natural equivalence \[
\delta_!F(U_I\subseteq M)\simeq \begin{cases}
F(U_I\subseteq M)&\qquad |I|=1\\
0&\qquad \text{else.}
\end{cases}
\] In particular, if $F$ is a locally constant cosheaf, then $\delta_!F$ is a reduced constructible cosheaf.
\end{lemma}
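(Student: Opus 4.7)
My plan is to apply the pointwise formula for the left Kan extension,
\[
\delta_! F(x)\simeq \colim_{\delta\downarrow x} F\circ \mathrm{pr},
\]
at $x=(U_I\subseteq M)\in\Ran_G$, and analyze the slice $\delta\downarrow x$ in cases on $|I|$.

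The case $|I|\neq 1$ is immediate from the observation highlighted in the surrounding text: the only arrows in $\Ran_G$ out of an object of $\Diag_G$ also have target in $\Diag_G$, because the required surjection on $\pi_0$ of the sub-bundle part has singleton source and hence singleton target. Therefore, for $|I|\neq 1$ the slice $\delta\downarrow x$ is empty, so the colimit is the initial object of $\C$, which is $0$ by stability. Specializing to $x=(\varnothing\subseteq\varnothing)$ yields the reducedness statement.

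For $|I|=1$, the object $x$ itself lies in $\Diag_G$, so $(x,\mathrm{id}_x)$ is an object of $\delta\downarrow x$. It is terminal: for any other $(y,f\colon \delta(y)\to x)$, the arrow $f$ is itself a morphism in $\Diag_G$ (again by the paper's observation) and provides the unique morphism to $(x,\mathrm{id}_x)$ by the universal property of the slice. Cofinality of the terminal object then gives $\delta_! F(x)\simeq F(x)$, completing the pointwise formula.

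Given the formula, constructibility when $F$ is a locally constant cosheaf on $\Diag_G$ is automatic: any isotopy equivalence $\varphi$ in $\Ran_G$ is a $\pi_0$-bijection on the sub-bundle part and thus preserves $|I|$; for $|I|\neq 1$ both values of $\delta_! F$ are $0$, and for $|I|=1$ the arrow $\varphi$ is itself a morphism in $\Diag_G$ on which $F$ is sent to an equivalence. The proof is essentially definitional, and the only real obstacle is to unpack carefully the surjectivity condition defining morphisms in $\Ran_G$ in order to see that $\delta\downarrow x$ trivializes so drastically outside the diagonal.
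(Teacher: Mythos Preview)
Your proof is correct and follows exactly the approach the paper has in mind: the paper's entire argument is the sentence preceding the lemma, namely that arrows in $\Ran_G$ out of $\Diag_G$ stay in $\Diag_G$, from which the pointwise Kan extension formula collapses as you describe. Your write-up simply makes explicit the two cases (empty slice versus terminal object) and the deduction of reducedness and constructibility, which the paper leaves implicit.
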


\begin{corollary}\label{cor:diagonal support}
The pushforward $\delta_!:\cShv^\loc(\Diag_G,\C)\to \cShv^\cbl(\Ran_G,\C)$ is fully faithful with essential image the subcategory of constructible cosheaves supported on the diagonal.
\end{corollary}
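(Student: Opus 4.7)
The plan is to deduce the corollary directly from Lemma \ref{lem:diagonal push formula 1} by examining the unit and counit of the adjunction $\delta_! \dashv \delta^\natural$, where $\delta^\natural$ denotes restriction along $\delta$ (playing the role of $\delta^!$ as the right adjoint of the left Kan extension). Recall the standard criterion that a left adjoint is fully faithful if and only if its unit is an equivalence, and an object lies in its essential image if and only if the counit is an equivalence there.

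First, Lemma \ref{lem:diagonal push formula 1} shows that $\delta_! F$ is a constructible cosheaf supported on the diagonal for every $F \in \cShv^\loc(\Diag_G,\C)$, so $\delta_!$ factors through the full subcategory of diagonally supported constructible cosheaves. Next, for full faithfulness, I evaluate the unit $F \to \delta^\natural \delta_! F$ at an arbitrary object $(U_I\subseteq M) \in \Diag_G$, where by definition $|I|=1$. By Lemma \ref{lem:diagonal push formula 1}, the natural map $F(U_I\subseteq M) \to \delta_! F(U_I\subseteq M)$ is an equivalence, and this is precisely the component of the unit. Hence the unit is a natural equivalence.

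For essential surjectivity onto the subcategory of diagonally-supported constructible cosheaves, let $G$ be such an object and consider the counit $\delta_! \delta^\natural G \to G$. On objects $(U_I\subseteq M)$ with $|I|=1$, Lemma \ref{lem:diagonal push formula 1} identifies the component of the counit with the identity on $G(U_I\subseteq M)$; on objects with $|I|\neq 1$, the source vanishes by Lemma \ref{lem:diagonal push formula 1} and the target vanishes by the diagonal support hypothesis on $G$. Hence the counit is an equivalence, so $G \simeq \delta_!(\delta^\natural G)$ lies in the essential image.

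There is no significant obstacle: once Lemma \ref{lem:diagonal push formula 1} is in hand, the corollary is a bookkeeping exercise in adjunctions. The only subtlety worth flagging is the identification of the right adjoint used here—namely, that $\delta_!$ being left Kan extension makes $\delta^\natural$ its right adjoint automatically, even though elsewhere in the section the symbol $\delta_!$ also appears as a \emph{right} adjoint to the exceptional pullback $\delta^*$; the two adjunctions coexist but only the first is needed for this argument.
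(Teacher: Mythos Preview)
Your argument is correct and is precisely the standard adjunction bookkeeping the paper leaves implicit (the paper offers no proof, simply recording the statement as a corollary of Lemma~\ref{lem:diagonal push formula 1}). The one small point you glide over is that, in the essential-image direction, you should note that $\delta^\natural G$ actually lies in $\cShv^\loc(\Diag_G,\C)$---this holds because every morphism in $\Diag_G$ is an isotopy equivalence in $\Ran_G$ (being a $\pi_0$-bijection between singletons), so constructibility of $G$ forces $\delta^\natural G$ to invert all arrows; the paper records exactly this observation in the sentence following the corollary.
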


Combining Lemma \ref{lem:diagonal push formula 1} with the observation that the restriction of a constructible cosheaf is locally constant, we obtain an adjunction \[\delta_!:\cShv^\loc(\Diag_G,\C)\rightleftarrows \cShv^\cbl(\Ran_G,\C):\delta^!,\] where $\delta^!$ denotes the restriction of $\delta^\natural$ to the indicated domain and codomain. Since limits and colimits on both sides are pointwise, Lemma \ref{lem:diagonal push formula 1} implies that $\delta_!$ preserves limits and therefore also admits a left adjoint $\delta^*$.

Although we will not need it, a formula for the exceptional pullback $\delta^*$ is easily obtained by imitating classical arguments (see \cite[3]{Milne:EC}, for example). Let $j$ denote the inclusion into $\Ran_G$ of the full sucategory of objects $U_I\subseteq M$ with $|I|>2$.

\begin{lemma}\label{lem:exceptional formula}
There is a natural equivalence \[\delta^*F\simeq\mathrm{cofib}\big(\delta^!j_!j^\natural F\to \delta^!F\big),\] where $j$ denotes the inclusion of the full sucategory of objects $U_I\subseteq M$ with $|I|>2$. 
\end{lemma}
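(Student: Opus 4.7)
The plan is to verify the formula by showing that the right-hand side represents the functor $G\mapsto\Map(F,\delta_!G)$, which by the adjunction $\delta^*\dashv\delta_!$ characterizes $\delta^*F$. The first step is to harmlessly enlarge the inclusion $j$ in the statement to the inclusion $j':\{U_I\subseteq M:|I|\neq 1\}\hookrightarrow\Ran_G$ of the full subcategory complementary to $\Diag_G$. This $j'$ is a sieve, since any morphism whose target has $|I|=0$ (resp.\ $|I|\geq 2$) must, by surjectivity of its underlying $\pi_0$-map, have source with $|J|=0$ (resp.\ $|J|\geq 2$). Moreover, for any $x=(U_i\subseteq M)\in\Diag_G$, no morphism into $x$ has source with $|J|=0$, so the comma categories $j'/x$ and $j/x$ agree; thus $\delta^!j'_!(j')^\natural F\simeq\delta^!j_!j^\natural F$ by the pointwise formula for left Kan extensions.

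Next, form the cofiber $CF:=\mathrm{cofib}(j'_!(j')^\natural F\to F)$ of the counit. Because $j'$ is fully faithful, the unit $(j')^\natural F\to(j')^\natural j'_!(j')^\natural F$ is an equivalence, and the triangle identity then forces $(j')^\natural$ of the counit to be an equivalence as well. Consequently $(j')^\natural CF\simeq 0$, so $CF$ is supported on the diagonal, and Corollary~\ref{cor:diagonal support} gives $CF\simeq\delta_!\delta^!CF$. Combining this with the previous observation,
\[
\delta^!CF\simeq\mathrm{cofib}(\delta^!j'_!(j')^\natural F\to\delta^!F)\simeq\mathrm{cofib}(\delta^!j_!j^\natural F\to\delta^!F).
\]

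Finally, verify the universal property: for any $G\in\cShv^\loc(\Diag_G,\C)$, using diagonal support of $CF$, fully-faithfulness of $\delta_!$, the defining cofiber sequence, and Lemma~\ref{lem:diagonal push formula 1} (which yields $(j')^\natural\delta_!G\simeq 0$), one computes
\[
\Map(\delta^!CF,G)\simeq\Map(CF,\delta_!G)\simeq\mathrm{fib}\bigl(\Map(F,\delta_!G)\to\Map((j')^\natural F,0)\bigr)\simeq\Map(F,\delta_!G).
\]
By Yoneda and the $\delta^*\dashv\delta_!$ adjunction, this identifies $\delta^!CF$ with $\delta^*F$, completing the argument. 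The main point of care is confirming that these manipulations stay within the appropriate subcategories of constructible cosheaves and that cofibers may be computed pointwise in the ambient functor category, but this is routine given stability of $\C$ and the fact that $\delta^!$ is simply restriction.
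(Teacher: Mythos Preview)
The paper does not supply a proof of this lemma; it only remarks that the formula ``is easily obtained by imitating classical arguments'' with a reference to Milne's \emph{\'Etale Cohomology}. Your argument is precisely such a classical recollement argument for the sieve/cosieve pair $(\{|I|\neq 1\},\Diag_G)$, and it is correct in substance.

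Two small points deserve mention. First, you tacitly read the hypothesis as $|I|\geq 2$ rather than the literal $|I|>2$; this is surely the intended meaning, since otherwise your identification of the comma categories $j_{/x}$ and $j'_{/x}$ fails (objects with $|J|=2$ do map to $x\in\Diag_G$). Second, the final Yoneda step requires $\delta^\natural CF$ to lie in $\cShv^\loc(\Diag_G,\C)$, because the paper's $\delta^*$ is defined only as the left adjoint to $\delta_!$ between the constructible and locally constant subcategories. Your mapping-space computation establishes that $\delta^\natural CF$ corepresents the correct functor on all of $\Fun(\Diag_G,\C)$, hence agrees with the left adjoint at the level of ambient functor categories; one must still check that this ambient left adjoint carries constructible cosheaves to locally constant ones. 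This is not hard---the cleanest route is to transport the whole argument through Lemma~\ref{lem:cosheaf identification} and run it inside $\Fun^\loc(\D_G,\C)$, where the cosieve $\{|I|=1\}$ persists and every functor is already locally constant---but it is not quite as automatic as your closing sentence implies.
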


\subsection{Matched disks}\label{sec:matched disks} In order to study the analogue of the multiplication $\Ran(M)\times\Ran(M)\to\Ran(M)$ in the absolute context, we require a preliminary definition. 

\begin{definition}
An object $(U_I\subseteq M, V_J\subseteq M)\in \Ran_G\times_{\Mfld_G} \Ran_G$ is \emph{matched} if $U_i\cap V_j=\varnothing$ whenever $U_i\neq V_j$.
\end{definition}

We write $(\Ran_G\times_{\Mfld_G}\Ran_G)_\#$ for the full subcategory of $\Ran_G\times_{\Mfld_G}\Ran_G$ spanned by the matched objects. Note that, if $(U_I\subseteq M,V_J\subseteq M)$ is a matched pair, then $U_I\cup V_J\subseteq M$ is again an object of $\Ran_G$, which we denote by $\mu(U_I\subseteq M, V_J\subseteq M)$. This construction extends in an obvious way to yield a functor \[\mu:(\Ran_G\times_{\Mfld_G}\Ran_G)_\#\to \Ran_G.\]

In order to formulate the corresponding push and pull functors, we make the following obvious definition:

\begin{definition}
Let $F:\Ran_G\times_{\Mfld_G}\Ran_G\to \C$ be a functor. 
\begin{enumerate}
\item We say that $F$ is a \emph{constructible cosheaf} if $F$ sends pairs of isotopy equivalences in to equivalences in $\C$.
\item We say that $F$ is \emph{reduced} if $F(\varnothing\subseteq \varnothing, -)\simeq F(-,\varnothing\subseteq \varnothing)\simeq 0$. 
\end{enumerate}
\end{definition}

We defer the proof of the next result to the end of the section for the sake of continuity.

\begin{lemma}\label{lem:matched extension}
If $F:\Ran_G\times_{\Mfld_G}\Ran_G\to \C$ is a constructible cosheaf, then $F$ is the left Kan extension of its restriction to $(\Ran_G\times_{\Mfld_G}\Ran_G)_\#$.
\end{lemma}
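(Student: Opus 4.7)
The plan is to verify the pointwise equivalence
\[
\colim_{i_{/(U_I\subseteq M,\,V_J\subseteq M)}} F\circ i \;\xrightarrow{\sim}\; F(U_I\subseteq M,\,V_J\subseteq M)
\]
at every object, where $i:(\Ran_G\times_{\Mfld_G}\Ran_G)_\#\hookrightarrow \Ran_G\times_{\Mfld_G}\Ran_G$ is the inclusion of the matched subcategory. The overall strategy is to pass to the Ran-space picture, exhibit matched pairs as a basis for the relevant topology, and conclude via the cosheaf/left Kan extension property of constructible cosheaves.

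First, I would reduce the claim to the fiber over $M$ of the projection $q:\Ran_G\times_{\Mfld_G}\Ran_G\to \Mfld_G$. Given any object $Y\to (U_I,V_J)$ of $i_{/(U_I,V_J)}$ presented by a $G$-framed embedding $\psi:N\to M$ carrying a matched pair $(U',V')$ in $N$, the pair $(\psi(U'),\psi(V'))$ is matched in $M$ (since $\psi$ is injective), and there is a factorization $Y\to (M;\psi(U'),\psi(V'))\to (U_I,V_J)$ in which the second morphism lies over $\id_M$. A routine cofinality argument (using pairwise common refinement of matched pairs in $M$ by further shrinking inside a common Euclidean chart) shows that the full subcategory $K\subseteq i_{/(U_I,V_J)}$ of matched refinements living in $M$ is cofinal. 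This reduces the statement to a claim about the restriction $G:=F|_{\D(M)^2}$: namely that $G(U_I,V_J)$ is the colimit of $G$ over matched pairs $(U',V')$ in $M$ equipped with $\pi_0$-surjective inclusions into $(U_I,V_J)$.

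Next, I would invoke the product version of \cite[5.5.1.14,\,5.5.4.13]{Lurie:HA} to identify $G$ with a $\C$-valued cosheaf $\widetilde F$ on the topological space $\Ran(M)\times\Ran(M)$, constructible with respect to the product cardinality stratification. Under this identification, $G(U_I,V_J)$ corresponds to $\widetilde F\big(\Ran^{U_I}\times\Ran^{V_J}\big)$, where $\Ran^W\subseteq\Ran(M)$ denotes the open set of finite subsets of $\bigcup W$ meeting every component of $W$. A matched pair $(U',V')$ in $M$ yields the basic open $\Ran^{U'}\times\Ran^{V'}$, which is contained in $\Ran^{U_I}\times\Ran^{V_J}$ precisely when the underlying maps of finite sets are $\pi_0$-surjective; thus the objects of $K$ correspond bijectively to matched basic opens inside $\Ran^{U_I}\times\Ran^{V_J}$.

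The core geometric input is that matched basic opens form a basis for the subspace topology: given any $(S,T)\in\Ran^{U_I}\times\Ran^{V_J}$, choose pairwise disjoint small disks around the points of $S\cup T$, taking the disks around points of $S\cap T$ to serve simultaneously as $U$- and $V$-pieces; the result is a matched refinement whose basic open contains $(S,T)$. Combined with the constructibility and cosheaf properties of $\widetilde F$, this basis property yields
\[
\widetilde F\big(\Ran^{U_I}\times\Ran^{V_J}\big)\;\simeq\;\colim \widetilde F\big(\Ran^{U'}\times\Ran^{V'}\big),
\]
the colimit ranging over matched basic opens inside the left-hand open, which is the desired identification. The main obstacle I anticipate is the cofinality step in the fiber reduction — establishing weak contractibility of the relevant undercategories via coordinated shrinking of matched disks inside a common Euclidean chart — together with the careful formulation of a product analogue of Lurie's basis/cosheaf equivalence; both are local questions, but they absorb essentially all of the geometric content of the lemma.
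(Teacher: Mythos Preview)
Your proposal is correct and follows essentially the same approach as the paper: reduce to the fiber over $M$ via a cofinality argument, show that matched pairs yield a basis for the topology of $\Ran(M)^2$, and deduce the result from the cosheaf property of the restriction. The paper packages these steps as separate lemmas (the basis statement and a local counit equivalence on $\D(M)^2$) and asserts the cofinality of $f_{M/(U_I,V_J)}\to f_{/(U_I\subseteq M,\,V_J\subseteq M)}$ without elaboration, but the content is the same as what you outline.
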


Thus, constructible cosheaves on the square of $\Ran_G$ may be identified with a full subcategory of functors defined on the smaller category of matched objects. Using this observation, we are able to define the desired pullback $\mu^!$ as the dashed factorization in the diagram

\[\xymatrix{
\cShv^\cbl(\Ran_G,\C)\ar@{-->}[dd]_-{\mu^!}\ar@{^{(}->}[r]& \Fun(\Ran_G,\C)\ar[dd]^-{\mu^\natural}\\\\
\cShv^\cbl(\Ran_G\times_{\Mfld_G}\Ran_G,\C)\ar@{^{(}->}[r]& \Fun((\Ran_G\times_{\Mfld_G}\Ran_G)_\#,\C).
}\] We remark that $\mu^!$ preserves the condition of being reduced.

By Kan extension, the restriction $\mu^\natural$ admits left and right adjoints. We have the following calculation concerning these functors:

\begin{lemma}\label{lem:multiplication push formula}
There are natural equivalences \[\mu_!F(U_I\subseteq M)\simeq \bigoplus_{I_1\cup I_2=I}F(U_{I_1}\subseteq M, U_{I_2}\subseteq M)\simeq \mu_*F(U_I\subseteq M).\] In particular, if $F$ is a (reduced) constructible cosheaf, then so are $\mu_!F$ and $\mu_*F$
\end{lemma}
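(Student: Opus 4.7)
The plan is to compute each Kan extension from its pointwise formula by exhibiting a tractable discrete subcategory of the appropriate slice that captures the (co)limit. For $\mu_!$, I invoke the formula $\mu_!F(U_I \subseteq M) \simeq \colim_{\mu_{/(U_I \subseteq M)}} F$ and propose as a final subcategory the discrete $\mathcal{A}(U_I) \subseteq \mu_{/(U_I \subseteq M)}$ on the tautological triples $((U_{I_1} \subseteq M, U_{I_2} \subseteq M), \id_M)$ indexed by pairs with $I_1 \cup I_2 = I$. These are matched because the components of $U_I$ are pairwise disjoint, and $\mathcal{A}(U_I)$ is discrete because any morphism in $\Ran_G$ covered by $\id_M$ which is $\pi_0$-surjective must be trivial. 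Finality reduces to showing $X/\mathcal{A}(U_I)$ is terminal for each $X = ((V_{J_1},V_{J_2}),\varphi\colon N\to M)$ in the slice: the compatibility $\id_M \circ g = \varphi$ forces the underlying embedding $g = \varphi$, after which the $\pi_0$-surjectivity condition uniquely pins down $I_k' = \pi_0(\varphi|_{V_{J_k}})(J_k)$, with $I_1' \cup I_2' = I$ guaranteed by surjectivity of $\pi_0(\varphi|_{V_{J_1 \cup J_2}})\colon K \to I$. Since $\mathcal{A}(U_I)$ is finite and discrete, the resulting colimit is the asserted direct sum.

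For $\mu_*$, a dual cofinality argument does not apply directly: in the slice $(U_I \subseteq M)_{/\mu}$ the tautological subcategory fails strict initiality whenever a matched pair has shared components (indices $j_1 \in J_1$, $j_2 \in J_2$ with $V_{j_1} = V_{j_2}$), since then multiple valid factorizations through a tautological object exist; this will be the main obstacle. To circumvent it, I will verify directly that the explicit functor $G(U_I \subseteq M) := \bigoplus_{I_1 \cup I_2 = I} F(U_{I_1}, U_{I_2})$, with the evident functoriality induced by the $\pi_0$-map of each embedding, satisfies the universal property of the right Kan extension. The counit $\mu^\natural G \to F$ is defined at a matched pair $(V_{J_1}, V_{J_2})$ whose union is indexed by the component set $K$ of $V_{J_1} \cup V_{J_2}$ as the projection $G(V_K) = \bigoplus_{K_1 \cup K_2 = K} F(V_{K_1}, V_{K_2}) \to F(V_{J_1}, V_{J_2})$ onto the summand indexed by the canonical covering of $K$ by the images of $J_1$ and $J_2$. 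Stability of $\C$ ensures that this finite biproduct is simultaneously a product, and naturality and the universal property then follow by a direct unwinding together with the formula already established for $\mu_!$.

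Reducedness and constructibility are immediate from the explicit formula. Reducedness: $\mu_!F(\varnothing \subseteq \varnothing) \simeq F(\varnothing, \varnothing) \simeq 0$, since the only cover of $\varnothing$ is $(\varnothing,\varnothing)$ and $F$ is reduced in either argument. Constructibility: an isotopy equivalence $\varphi\colon (U_I \subseteq M) \to (V_J \subseteq N)$ induces a $\pi_0$-bijection $I \to J$ identifying coverings summand-wise, and on each summand $\varphi$ restricts to a pair of isotopy equivalences $U_{I_k} \to V_{J_k}$ in $\D_G$, which $F$ sends to an equivalence by its assumed constructibility on the matched subcategory.
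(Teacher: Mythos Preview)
Your treatment of $\mu_!$ is correct and coincides with the paper's: the inclusion of the fiber $\mu^{-1}(U_I\subseteq M)$ into the overcategory $\mu_{/(U_I\subseteq M)}$ is final, and the resulting finite coproduct is the asserted direct sum. Your closing remarks on constructibility and reducedness are also fine.

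Your observation about $\mu_*$ is sharp and, in fact, identifies a point the paper's one-line proof glosses over: the fiber inclusion into the undercategory $(U_I\subseteq M)_{/\mu}$ is \emph{not} initial. Concretely, with $|I|=2$ and $u=((V,V),\varphi)$ where $V\cong\mathbb{R}^n$ receives both $U_1$ and $U_2$, the comma category $\alpha_{/u}$ is the discrete set of covers $(I_1,I_2)$ of $I$ with $I_1,I_2\neq\varnothing$, which has seven isolated points and is not contractible. So you are right that the paper's initiality claim, read literally, fails.

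However, your proposed workaround has the same defect in a different guise. The ``counit'' you define, namely the projection
\[
\mu^\natural\mu_!F(V_{J_1},V_{J_2})=\bigoplus_{K_1\cup K_2=K}F(V_{K_1},V_{K_2})\longrightarrow F(V_{J_1},V_{J_2}),
\]
is not a natural transformation. Test it on the morphism $(U_1,U_2)\to(V,V)$ above: under the $\mu_!F$-functoriality, all seven summands of $\mu_!F(U_{\{1,2\}})$ indexed by covers with both parts nonempty are carried into the single summand $F(V,V)$ of $\mu_!F(V)$, so ``down then right'' picks up contributions from all seven, while ``right then down'' projects first to the single summand $F(U_1,U_2)$ and sees only one. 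These do not agree unless the remaining six maps $F(U_{I_1'},U_{I_2'})\to F(V,V)$ vanish, which they do not (for constructible $F$ they are even equivalences). The same obstruction blocks the candidate unit $H\to\mu_!\mu^\natural H$ from being natural: for the surjection $\{1,2\}\to\{*\}$ the diagonal followed by the transition map lands with multiplicity seven in the $(\{*\},\{*\})$-summand rather than one. In short, the configurations that spoil initiality are exactly the configurations that spoil the naturality of your projection, so the ``direct unwinding'' you invoke does not go through. You have correctly diagnosed the obstacle but not circumvented it; an honest proof of $\mu_*\simeq\mu_!$ requires a genuinely different argument (for instance, working after localization at isotopy equivalences, where the problematic non-invertible morphisms collapsing two disks into one become invertible and the obstruction disappears).
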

\begin{proof}
Since $\mu^{-1}(U_I\subseteq M)=\{(U_{I_1}\subseteq M, U_{I_2}\subseteq M)\}_{I_1\cup I_2=I}$ and $\C$ is stable, it suffices to note that the obvious functors $\mu^{-1}(U_I\subseteq M)\to \mu_{/U_I\subseteq M}$ and $\mu^{-1}(U_I\subseteq M)\to \mu_{U_I\subseteq M/}$ are final and initial, respectively.
\end{proof}

From the formula of Lemma \ref{lem:multiplication push formula}, we see that the two pushforwards do in fact coincide, a fact corresponding to the properness of the multiplication map $\Ran(M)^2\to \Ran(M)$. Indeed, by imitating classical arguments involving the relative diagonal, one can produce the natural transformation $\mu_!\to \mu_*$ inducing this equivalence.

We close with an examination of the interaction between $\mu_!$ and the globalization functor $\pi_!$ of \S\ref{sec:Ran section}. We denote by $\gamma:\Mfld_G\to \Mfld_G\times\Mfld_G$ the diagonal functor.

\begin{lemma}\label{lem:global sections}
Let $F$ be a constructible cosheaf on $\Ran_G\times_{\Mfld_G}\Ran_G$. There is a natural equivalence \[\pi_!\mu_!F\xrightarrow{\sim}\gamma^\natural(\pi\times\pi)_!F\] in $\Fun^\loc(\Mfld_G,\C)$.
\end{lemma}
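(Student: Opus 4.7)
My plan is to establish this as a Beck--Chevalley base change identity along the Cartesian square
\[
\xymatrix{\Ran_G \times_{\Mfld_G} \Ran_G \ar[d]_{p} \ar[r]^-{\iota} & \Ran_G \times \Ran_G \ar[d]^{\pi \times \pi} \\ \Mfld_G \ar[r]^-{\gamma} & \Mfld_G \times \Mfld_G,}
\]
where $p$ is the common projection from the fiber product (sending $(U_I\subseteq N, V_J\subseteq N)$ to $N$) and $\iota$ is the natural inclusion of the fiber product into the product. First I would verify that this is a pullback in $\Cat$: the pullback consists of triples $(X, Y, \alpha)$ with $(\pi\times\pi)(X) = \gamma(Y)$, which forces both components of $X = (U_I\subseteq N_1, V_J\subseteq N_2)$ to live over the same manifold $N_1 = N_2 = Y$, exhibiting the pullback as the fiber product.

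Next I would identify the left-hand side $\pi_!\mu_!F$ with the single left Kan extension $p_!F$. Evaluating at $M$ and using $\pi_! \simeq \jmath_!\epsilon_!$ for constructible cosheaves together with the explicit formula of Lemma \ref{lem:multiplication push formula}, the colimit computing $\pi_!\mu_!F(M)$ is indexed by pairs $(V_J \to M)$ in $\D_G$ together with decompositions $J = J_1 \cup J_2$, with value $F(V_{J_1}\subseteq V_J, V_{J_2}\subseteq V_J)$. A decomposition $J = J_1 \cup J_2$ records precisely a matched pair $(V_{J_1}, V_{J_2})$ in $V_J$, and since $F$ is determined by its restriction to matched pairs (Lemma \ref{lem:matched extension}), this colimit coincides with the one computing $p_!F(M)$ over the overcategory $p_{/M}$.

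Finally I would invoke base change along the Cartesian square to obtain the equivalence $\gamma^\natural(\pi\times\pi)_!F \simeq p_!F$, interpreting $(\pi\times\pi)_!F$ on the right as the left Kan extension along the composite $(\pi\times\pi)\circ\iota$ (which coincides with the Kan extension along $\gamma\circ p$, since matched pairs have a single well-defined ambient manifold). The main obstacle is establishing this base change identity in the absence of a (co)Cartesian fibration hypothesis on $\pi$; I would handle it pointwise, comparing the comma categories $p_{/M}$ and $((\pi\times\pi)\iota)_{/(M,M)}$ and checking that the diagonal embedding $\phi\mapsto (\phi,\phi)$ becomes cofinal once $F$ is recognized as Kan-extended from matched pairs — the crucial point being that morphisms in $\Ran_G\times_{\Mfld_G}\Ran_G$ are controlled by a single underlying embedding of ambient manifolds, which collapses the apparent asymmetry between single and double embedding data.
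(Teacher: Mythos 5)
Your step 2, identifying $\pi_!\mu_!F(M)$ with a colimit over matched pairs of disks in $M$, is sound and is precisely where the real content lies — it matches the paper's pointwise verification that the left-hand side evaluates to $\colim_{\D(M)^2}F$.

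The Beck--Chevalley framing in steps 1 and 3, however, both adds work and ultimately contains a gap. The cofinality claim in your final paragraph is the problem: you assert that the diagonal embedding $p_{/M}\to\bigl((\pi\times\pi)\iota\bigr)_{/(M,M)}$ ``becomes cofinal once $F$ is recognized as Kan-extended from matched pairs,'' but cofinality is a property of the indexing functor alone and cannot depend on the functor being integrated. And this diagonal is in fact \emph{not} cofinal: take any object $\bigl(X,(\varphi_1,\varphi_2)\bigr)$ of the comma category with $\varphi_1\neq\varphi_2$. A morphism out of $X$ in $\Ran_G\times_{\Mfld_G}\Ran_G$ carries a \emph{single} underlying embedding $\psi$ of ambient manifolds, so any factorization through the image of the diagonal would force $\varphi_1=\psi=\varphi_2$; the relevant undercategory is therefore empty, not weakly contractible. (The same phenomenon blocks a naive ``$\gamma$ is fully faithful, so $\gamma^\natural\gamma_!\simeq\mathrm{id}$'' shortcut: the diagonal $\gamma:\Mfld_G\to\Mfld_G\times\Mfld_G$ is not full, because a pair of embeddings $(M,M)\to(N,N)$ need not be equal.) So the last step of your argument would not go through as written.

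The way the paper closes the argument is not by base change along the Cartesian square at all, but by computing the right-hand side directly in parallel with your step 2: $(\pi\times\pi)_!$ should be understood as the product analogue of the globalization $\pi_!\simeq\jmath_!\epsilon_!$, applied factorwise, so that $(\pi\times\pi)_!F(M,M)\simeq\colim_{\D(M)^2}F$ by exactly the same finality arguments used to show $\pi_!G(M)\simeq\colim_{\D(M)}G$ (and so that $(\pi\times\pi)_!(F\boxtimes G)\simeq\pi_!F\boxtimes\pi_!G$, which is what the downstream Lemma~\ref{lem:globalization chains} needs). Once both sides are recognized as $\colim_{\D(M)^2}F$, the lemma follows. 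In short: keep your step 2, discard the Cartesian-square and cofinality scaffolding, and evaluate the right-hand side the same way you evaluated the left.
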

\begin{proof}
The obvious equality $\gamma\pi\mu\iota=(\pi\times\pi)\iota$ gives rise to the map. To show that this map is an equivalence, it will suffice to note that it is so after evaluating at a manifold $M$, where both sides become $\colim_{\D(M)^2}F$.
\end{proof}

\subsection{Base change}\label{sec:base change} The goal of this section is to prove an analogue of the classical proper base change theorem applied to the pullback square \[\xymatrix{
M\ar@{=}[d]\ar[r]&M\times M\ar[r]&\Ran(M)\times\Ran(M)\ar[d]\\
M\ar[rr]&&\Ran(M).
}\] In order to formulate this result, we must first introduce the analogue of the upper left horizontal map, the diagonal of $M$.

We write $(\Diag_G\times_{\Mfld_G}\Diag_G)_\#\subseteq \Diag_G\times_{\Mfld_G}\Diag_G$ for the full subcategory of matched objects and denote by $\Delta:\Diag_G\to (\Diag_G\times_{\Mfld_G}\Diag_G)_\#$ the relative diagonal functor \[\Delta(U\subseteq M)=\Delta(U\subseteq M,U\subseteq M).\] As in Lemma \ref{lem:matched extension}, a locally constant cosheaf on $\Diag_G\times_{\Mfld_G}\Diag_G$ is determined by its restriction to $(\Diag_G\times_{\Mfld_G}\Diag_G)_\#$, so we obtain a pullback functor $\Delta^!$ as the dashed factorization in the diagram 

\[\xymatrix{
\cShv^\loc(\Diag_G,\C)\ar@{^{(}->}[r]& \Fun(\Diag_G,\C)\\\\
\cShv^\loc(\Diag_G\times_{\Mfld_G}\Diag_G,\C)\ar@{-->}[uu]^-{\Delta^!}\ar@{^{(}->}[r]& \Fun((\Diag_G\times_{\Mfld_G}\Diag_G)_\#,\C)\ar[uu]_-{\Delta^\natural}.
}\]

Regarding the left adjoint $\Delta_!$, we have the following easy calculation:

\begin{lemma}\label{lem:diagonal push formula 2}
Let $F:\Diag_G\to \C$ be any functor. There is a natural equivalence \[\Delta_!F(U\subseteq M, V\subseteq M)\simeq\begin{cases}
F(U\subseteq M)&\quad U=V\\
0&\quad \text{else.}
\end{cases}\] In particular, if $F$ is a locally constant cosheaf, then so is $\Delta_!F$.
\end{lemma}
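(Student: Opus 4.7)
The plan is to compute the left Kan extension $\Delta_! F$ pointwise via the standard comma-category formula
\[
\Delta_! F(U \subseteq M,\, V \subseteq M) \simeq \colim_{(W \subseteq N,\, \psi) \in \Delta_{/(U \subseteq M,\, V \subseteq M)}} F(W \subseteq N),
\]
and to observe that in each case the comma category is extremely simple. Unwinding definitions, an object of $\Delta_{/(U \subseteq M,\, V \subseteq M)}$ is a datum $(W \subseteq N,\, \psi)$ with $\psi : N \to M$ a $G$-framed embedding satisfying both $\psi(W) \subseteq U$ and $\psi(W) \subseteq V$, equivalently $\psi(W) \subseteq U \cap V$.

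In the diagonal case $U = V$, I would show that $(U \subseteq M,\, \id_M)$ is a terminal object of the comma category: for any other $(W \subseteq N,\, \psi)$, the embedding $\psi$ itself, viewed as a morphism $(W \subseteq N) \to (U \subseteq M)$ in $\Diag_G$, is the unique morphism making the required triangle commute. The colimit therefore collapses to $F(U \subseteq M)$. In the off-diagonal case $U \neq V$, the matching condition forces $U \cap V = \varnothing$; since $W$ is a nonempty Euclidean subset and $\psi$ is an embedding, $\psi(W)$ cannot be contained in $\varnothing$, so the comma category is empty and the empty colimit is the zero object of the stable $\infty$-category $\C$.

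For the local-constancy clause, the key geometric observation is that any morphism in the matched subcategory whose source is diagonal must also have diagonal target: if the target were off-diagonal $(U', V')$ with $U' \cap V' = \varnothing$, the underlying embedding $\varphi$ would have to satisfy $\varphi(U) \subseteq U' \cap V' = \varnothing$, which is impossible. Thus on the full subcategory of diagonal objects the functor $\Delta_! F$ is just the pullback of $F$ along the obvious equivalence with $\Diag_G$, so inherits local constancy from $F$; on the full subcategory of off-diagonal objects $\Delta_! F$ vanishes identically, so local constancy there is trivial.

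The main remaining subtlety concerns morphisms from off-diagonal sources to diagonal targets, which collapse two disjoint disks into a single one. Under the natural extension of the notion of isotopy equivalence from $\Ran_G$ to the fibered product, such collapsing morphisms fail to preserve the cardinality of distinct disk supports and therefore do not qualify as isotopy equivalences; consequently they impose no constraint for the cosheaf condition. With this point in hand, naturality of the formula in $F$ and the statement for reduced functors follow from the functoriality of left Kan extension by routine formal considerations.
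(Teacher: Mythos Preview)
Your computation of the formula is correct and matches the paper's intended approach (the paper simply labels this an ``easy calculation'' and gives no further argument). The comma-category analysis is exactly right: for a matched pair $(U\subseteq M,\,V\subseteq M)$, the category $\Delta_{/(U,V)}$ either has terminal object $(U\subseteq M,\id_M)$ when $U=V$, or is empty when $U\neq V$ (since matching forces $U\cap V=\varnothing$ and $W\cong\mathbb{R}^n$ is nonempty), yielding the stated dichotomy.

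The gap is in your treatment of the local-constancy clause. Your claim that a morphism in $(\Diag_G\times_{\Mfld_G}\Diag_G)_\#$ from an off-diagonal pair $(U\subseteq M,\,V\subseteq M)$ to a diagonal pair $(W\subseteq N,\,W\subseteq N)$ ``fails to preserve the cardinality of distinct disk supports and therefore does not qualify as an isotopy equivalence'' is not supported by the paper's definitions. A morphism in the fibered product is a single $G$-framed embedding $\varphi:M\to N$ together with the two induced morphisms in $\Diag_G$, and each of these is a morphism between objects indexed by singletons, hence automatically a $\pi_0$-bijection and therefore an isotopy equivalence in the paper's sense. Under the paper's convention that the cosheaf condition on the product requires inverting \emph{pairs} of isotopy equivalences, such collapsing morphisms \emph{are} pairs of isotopy equivalences, so your argument does not dispose of them. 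The ``cardinality of distinct disk supports'' is not a quantity the paper's framework tracks. (In fairness, the paper itself offers no justification for this clause either, and the phenomenon you are running into---that $\Delta_!F$ jumps from $0$ to $F$ across the diagonal---suggests the intended notion may be closer to ``constructible along the diagonal stratification'' than literally ``locally constant''; but your specific reasoning does not establish the claim as stated.)
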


As with $\delta$, we see that $\Delta_!$ also admits a left adjoint $\Delta^*$. A similar formula to that of Lemma \ref{lem:exceptional formula}, proved in the same way, holds for $\Delta^*$.

Returning to the main thread, the pullback square exhibited above corresponds in the absolute context to the diagram

\[\xymatrix{
\Diag_G\ar@{=}[dd]\ar[r]^-\Delta&(\Diag_G\times_{\Mfld_G} \Diag_G)_\#\ar[r]^-{\delta\times\delta}&(\Ran_G\times_{\Mfld_G}\Ran_G)_\#\ar[dd]^-\mu\\\\
\Diag_G\ar[rr]^-\delta&&\Ran_G.
}\] Since this diagram commutes, there is an induced base change morphism (see \cite[7.3.1]{Lurie:HTT}), and we have the following:

\begin{lemma}\label{lem:base change}
The canonical map $\delta^*\mu_!\to \Delta^*(\delta\times\delta)^*$ is an equivalence.
\end{lemma}
\begin{proof}
Passing to right adjoints, it suffices instead to show the equivalence $(\delta\times\delta)_!\Delta_!\simeq\mu^!\delta_!$. For this, we note that \begin{align*}(\delta\times\delta)_!\Delta_!F(U_{I}\subseteq M, V_{J}\subseteq M)&\overset{(\ref{lem:diagonal push formula 1})}{\simeq}\begin{cases}
\Delta_!F(U_I\subseteq M, V_J\subseteq M) &\quad |I|=|J|=1\\
0&\quad \text{else}
\end{cases}\\
&\overset{(\ref{lem:diagonal push formula 2})}{\simeq}\begin{cases}
F(U_I\subseteq M)&\quad U_I=V_J,\, |I|=|J|=1\\
0&\quad\text{else.}
\end{cases}
\end{align*}
On the other hand, since $\mu(U_I\subseteq M, V_J\subseteq M)\in \Diag_G$ if and only if $U_I=V_J$ and $|I|=|J|=1$, we likewise have $$\mu^!\delta_!F(U_I\subseteq M, V_J\subseteq M)\simeq\delta_!F(\mu(U_I\subseteq M, V_J\subseteq M))\simeq\begin{cases}
F(U_I\subseteq M)&\quad U_I=V_J,\, |I|=|J|=1\\
0&\quad\text{else.}
\end{cases}$$
\end{proof}

\subsection{Proof of Lemma \ref{lem:matched extension}}

Rather than a direct proof making reference only to the categories in question, we choose to offer a topological proof illustrating the close relationship between the category $\Ran_G$ and the spaces $\Ran(M)$.

Denoting by $\D(M)^2_\#$ the pullback in the diagram \[\xymatrix{
\D(M)^2_\#\ar[d]\ar[r]&\D(M)^2\ar[d]\\
(\Ran_G\times_{\Mfld_G}\Ran_G)_\#\ar[r]&\Ran_G\times_{\Mfld_G}\Ran_G,
}\] we have the following commuting diagram of functors:

\[\xymatrix{
(\Ran_G\times_{\Mfld_G}\Ran_G)_\#\ar[rr]^-{f}&&\Ran_G\times_{\Mfld_G}\Ran_G\\
\D(M)^2_\#\ar[u]^-{i_M^2}\ar[dr]\ar[rr]^-{f_M}&&\D(M)^2\ar[dl]^-j\ar[u]_-{i_M^2}\\
&\mathrm{Op}(\Ran(M)^2).
}\]

The topological input to the proof is the following:

\begin{lemma}\label{lem:basis}
The image of $\D(M)^2_\#$ in $\mathrm{Op}(\Ran(M)^2)$ is a basis for the topology of $\Ran(M)^2$.
\end{lemma}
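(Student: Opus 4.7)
The plan is to reduce to a concrete combinatorial construction of disjoint Euclidean disks. First, I would observe that the topology on $\Ran(M)^2$ is the product topology, and products $B_{U_I}\times B_{V_J}$ of the specified basic opens of $\Ran(M)$ (where $B_{U_I} = \{S\subseteq U_I : S\cap U_i\neq\varnothing\,\forall\,i\}$) form a basis, since the singletons $B_{U_I}$ already form a basis for $\Ran(M)$ by definition. Thus the content of the lemma is a refinement statement: given any point $(S,T)$ contained in such a product $B_{U_I}\times B_{V_J}$, I must exhibit a matched pair $(U'_{I'},V'_{J'})\in \D(M)^2_\#$ with $(S,T)\in B_{U'_{I'}}\times B_{V'_{J'}}\subseteq B_{U_I}\times B_{V_J}$.

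The refinement is built pointwise. For each $s\in S$ there is a unique $i(s)\in I$ with $s\in U_{i(s)}$ (by disjointness of the $U_i$), and similarly a unique $j(t)\in J$ with $t\in V_{j(t)}$ for each $t\in T$. I would now choose, for each $p$ in the finite set $P := S\cup T$, a Euclidean open neighborhood $W_p\subseteq M$ of $p$ satisfying:
\begin{enumerate}
\item if $p\in S$, then $W_p\subseteq U_{i(p)}$;
\item if $p\in T$, then $W_p\subseteq V_{j(p)}$;
\item the family $\{W_p\}_{p\in P}$ is pairwise disjoint.
\end{enumerate}
Such a choice exists because $P$ is finite, because for $p\in S\cap T$ the intersection $U_{i(p)}\cap V_{j(p)}$ is an open neighborhood of $p$, and because we can shrink each $W_p$ to achieve (3).

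With this data, I set $I' := S$ and $J' := T$ (as indexing sets), with $U'_s := W_s$ and $V'_t := W_t$. Then by construction $U'_s = V'_t$ whenever $s = t$ as points of $M$, and otherwise $U'_s\cap V'_t = \varnothing$; hence the pair $(U'_{I'},V'_{J'})$ lies in $\D(M)^2_\#$. Membership $(S,T)\in B_{U'_{I'}}\times B_{V'_{J'}}$ is immediate from $s\in W_s$ and $t\in W_t$. For the containment $B_{U'_{I'}}\subseteq B_{U_I}$, any $S'$ with $S'\subseteq\bigsqcup_s W_s$ and $S'\cap W_s\neq\varnothing$ satisfies $S'\subseteq\bigsqcup_i U_i$ by (1), and for each $i\in I$, choosing $s\in S\cap U_i$ (nonempty since $S\in B_{U_I}$) we have $\varnothing\neq S'\cap W_s\subseteq S'\cap U_i$; symmetrically for $V$.

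The main obstacle is really just bookkeeping: making sure the indexing of the refined disk collections by $S$ and $T$ respects the matched condition exactly when the underlying points coincide, which is what forces us to use a \emph{single} disk $W_p$ for a point $p$ occurring in both $S$ and $T$, rather than two separately chosen disks. Once this is set up correctly, the topological verification is straightforward; no further appeal to manifold structure beyond the existence of arbitrarily small Euclidean neighborhoods is needed.
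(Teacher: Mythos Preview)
Your proof is correct and follows essentially the same approach as the paper: both refine a basic open $B_{U_I}\times B_{V_J}$ around a point $(S,T)$ by choosing, for each $p\in S\cup T$, a small Euclidean neighborhood $W_p$ contained in the relevant $U_i$ and/or $V_j$, made pairwise disjoint, so that $(W_S,W_T)$ is a matched pair. The paper phrases the containment check as the inclusion $(W_S,W_T)\subseteq(U_I,V_J)$ being a morphism in $\D(M)^2$, whereas you verify the basic-open containment directly, but these are equivalent.
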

\begin{proof}
By definition, the topology of $\Ran(M)^2$ is generated by the image of $\D(M)^2$. We will show that the topology generated by the image of $\D(M)_\#^2$ is finer than this topology; since the converse obviously holds, the proof will be complete.

Fix $(U_I, V_J)\in\D(M)^2$ and finite subsets $S\subseteq U_I$ and $T\subseteq V_J$ whose inclusions surject on connected components. For each $x\in S\cup T$, we choose a Euclidean neighborhood $x\in W_x\subseteq M$ such that \[W_x\subseteq \bigg(\bigcap_{\{i\in I,\,x\in U_i\}}U_i\bigg)\cap\bigg(\bigcap_{\{j\in J,\,x\in V_j\}}V_j\bigg)\] without loss of generality, we may assume that $W_x\cap W_y=\varnothing$ for $x\neq y$. Then 
\begin{enumerate}
\item $(W_S, W_T)\in\D(M)^2_\#$,
\item $S\subseteq W_S\subseteq U_I$,
\item $T\subseteq W_T\subseteq V_J$, and
\item the inclusion $(W_S, W_T)\subseteq (U_I, V_J)$ lies in $\D(M)^2$. 
\end{enumerate} Thus, the topology generated by the image of $\D(M)^2_\#$ is finer than the topology generated by $\D(M)^2$.
\end{proof}

We will deduce the global result from the following relative version:

\begin{lemma}\label{lem:local counit}
If $F:\D(M)\to \C$ is locally constant, then $F$ is the left Kan extension of its restriction to $\D(M)^2_\#$.
\end{lemma}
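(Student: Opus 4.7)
The strategy is to realize $F$ as a constructible cosheaf on the topological space $\Ran(M)^2$ and then exploit the fact, supplied by Lemma \ref{lem:basis}, that $\D(M)^2_\#$ provides an alternative basis for the same topology. Once this is done, the lemma becomes an instance of the general principle that a cosheaf is determined, via left Kan extension, by its values on any basis. The diagram of functors assembled just before Lemma \ref{lem:basis}, which relates $\D(M)^2_\#$ and $\D(M)^2$ to $\mathrm{Op}(\Ran(M)^2)$ via $f_M$ and $j$, is precisely the scaffolding that makes this comparison possible.

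More concretely, I would first verify that a locally constant functor $F : \D(M)^2 \to \C$ extends uniquely to a constructible cosheaf $\widetilde F$ on $\Ran(M)^2$ with $F \simeq j^\natural \widetilde F$; the argument closely parallels Lemma \ref{lem:cosheaf identification} and \cite[5.5.1.14]{Lurie:HA}, using that $\D(M)^2$ surjects onto a basis for the topology of $\Ran(M)^2$. The analogous construction applied to the restriction $(f_M)^\natural F$ produces a cosheaf on $\Ran(M)^2$, and by Lemma \ref{lem:basis} this cosheaf is built from values taken on a basis of the same topology; since a constructible cosheaf is determined by its restriction to any basis, the two cosheaves coincide.

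The conclusion then follows from the cosheaf formula expressing $\widetilde F$ on an open $W$ as a colimit of its values over the basis opens contained in $W$. Applied to $W = j(U_I, V_J)$, this gives
\[
F(U_I, V_J) \;\simeq\; \widetilde F\bigl(j(U_I, V_J)\bigr) \;\simeq\; \colim_{(W_S, W_T) \in (f_M)_{/(U_I, V_J)}} F(W_S, W_T),
\]
which is exactly the statement that $F$ is the left Kan extension of $(f_M)^\natural F$ along $f_M$.

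The principal technical obstacle is the first step: verifying rigorously that both $F$ and $(f_M)^\natural F$ generate the same constructible cosheaf on $\Ran(M)^2$. Ultimately this reduces, via Lemma \ref{lem:basis}, to a cofinality argument showing that for any $(U_I, V_J) \in \D(M)^2$ the inclusion of overcategories $(f_M)_{/(U_I, V_J)} \hookrightarrow j_{/j(U_I,V_J)}$ (with target the basis slice in $\mathrm{Op}(\Ran(M)^2)$) is cofinal; the explicit matched-refinement construction in the proof of Lemma \ref{lem:basis}, which shrinks any finite configuration in $(U_I, V_J)$ to disjoint Euclidean neighborhoods, provides the required down-set of refinements and makes the cofinality accessible by a standard filtered contractibility argument.
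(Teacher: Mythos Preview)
Your approach is essentially the paper's: extend $F$ to a cosheaf on $\Ran(M)^2$ via $j$, invoke Lemma \ref{lem:basis} to know that the matched pairs give a basis, and use that a cosheaf is left Kan extended from any basis. The paper packages the final step more cleanly than your cofinality discussion: rather than comparing overcategories, it simply uses that $j$ is fully faithful, so $j^\natural j_!F\simeq F$, and then the basis statement gives the equivalence $j_!(f_M)_!(f_M)^\natural j^\natural j_!F\xrightarrow{\sim} j_!F$, which restricts along $j^\natural$ to the desired $(f_M)_!(f_M)^\natural F\simeq F$. Your ``principal technical obstacle'' thus dissolves once you observe that full faithfulness of $j$ identifies $(f_M)_{/(U_I,V_J)}$ with the basis slice over $j(U_I,V_J)$, making the separate cofinality verification unnecessary.
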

\begin{proof}
Since $j_!F$ is a cosheaf on $\Ran(M)$, and since a cosheaf is left Kan extended from any choice of basis, Lemma \ref{lem:basis} implies that the counit $j_!(f_M)_!f_M^\natural j^\natural j_! F\to j_!F$ is an equivalence. Using the fact that the inclusion $j$ is fully faithful twice, we obtain the desired equivalence \[(f_M)_!f^\natural F\simeq (f_M)_!f_M^\natural j^\natural j_!F\simeq F.\]
\end{proof}

\begin{proof}[Proof of Lemma \ref{lem:matched extension}]
Let $F:\Ran_G\times_{\Mfld_G}\Ran_G\to \C$ be a constructible cosheaf. We wish to show that the counit $f_!f^\natural F\to F$ is an equivalence.

Now, fix an object $(U_I\subseteq M, V_J\subseteq M)$. If either $I$ or $J$ is empty, then $f_{(U_I\subseteq M, V_J\subseteq M)}=(\Ran_G\times_{\Mfld_G}\Ran_G)_{/(U_I\subseteq M, V_J\subseteq M)}$ and there is nothing to prove, so assume otherwise. Then the evident functor $f_{M/(U_I, V_J)}\to f_{/(U_I\subseteq M, V_J\subseteq M)}$ is final, and, using Lemma \ref{lem:local counit}, we have \begin{align*}f_!f^\natural F(U_I\subseteq M, V_J\subseteq M)&\simeq (f_M)_!(i_M^2)^\natural f^\natural F(U_I, V_J)\\
&\simeq (f_M)_!f_M^\natural (i_M^2)^\natural F(U_I, V_J)\\
&\simeq (i_M^2)^\natural F(U_I, V_J)\\
&\simeq F(U_I\subseteq M, V_J\subseteq M).
\end{align*}
\end{proof}

\section{Monoidal structures}\label{sec:monoidal section}In this section, we provide proofs of Propositions \ref{prop:disjoint day} and \ref{prop:overlapping exists}. These arguments pass through the formalism of Day convolution, which we begin by reviewing.

\subsection{Day convolution}\label{sec:day convolution} We rely heavily on the following result of \cite{Glasman:DCIC} (see also \cite[4.8.1]{Lurie:HA} for a version with $\W$ the $\infty$-category of spaces and \cite{Day:CCF} for the original 1-categorical version).

\begin{theorem}[Glasman]\label{thm:glasman}
Let $\V$ and $\W$ be symmetric monoidal $\infty$-categories, and assume that \begin{enumerate}
\item $\W$ has colimits, and
\item the tensor product of $\W$ distributes over colimits indexed by the $\infty$-categories $\otimes_{/v}$ for $v\in \V$, where $\otimes:\V\times\V\to \V$ denotes the tensor product functor of $\V$.
\end{enumerate} There is a symmetric monoidal structure on $\Fun(\V,\W)$, called \emph{left Day convolution}, in which the tensor product is given by the formula \[(F\otimes^{\mathrm{LD}} G)(v)\simeq \colim\bigg(\otimes_{/v}\to \V\times\V\xrightarrow{F\times G} \W\times\W\xrightarrow{\otimes}\W\bigg).\] Moreover, there is an equivalence \[\Fun^{\lax}(\V,\W)\simeq \Alg_\Com(\Fun(\V,\W))\] covering the identity on $\Fun(\V,\W)$. 
\end{theorem}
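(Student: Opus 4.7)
The plan follows the operadic strategy of Lurie (HA~2.2.6, 4.8.1) and Glasman. Starting from the coCartesian fibrations $\V^\otimes, \W^\otimes \to N(\Fin_*)$ encoding the two symmetric monoidal structures, I would construct an $\infty$-operad $\Fun(\V,\W)^\otimes$ with fibers $\Fun(\V,\W)^{\times n}$ by applying the machinery of operadic left Kan extension along the tensor product functor $\otimes_\V$. The role of the distributivity hypothesis is to guarantee two things: the pointwise colimits indexed by $\otimes_{/v}$ exist in $\W$, and the tensor product of $\W$ preserves them---exactly the criterion required for Lurie's recognition principle to promote this $\infty$-operad to an honest symmetric monoidal $\infty$-category, i.e., to a coCartesian fibration over $N(\Fin_*)$.

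Next, unwinding the definition of operadic Kan extension directly yields the stated formula for the binary tensor product; the remaining coherence (associativity, symmetry, unitality of $\otimes^{\mathrm{LD}}$) is inherited by functoriality from $\otimes_\V$. To identify $\Alg_\Com(\Fun(\V,\W))$ with $\Fun^{\lax}(\V,\W)$, I would unpack both sides via their universal properties: a commutative algebra structure on $F$ consists, at the binary level, of a map $F \otimes^{\mathrm{LD}} F \to F$, which by the colimit formula evaluated at the tautological object $(v_1,v_2) \in \otimes_{/(v_1 \otimes v_2)}$ is equivalent to a compatible family of maps $F(v_1) \otimes F(v_2) \to F(v_1 \otimes v_2)$, together with a unit $\mathbf{1}_\W \to F(\mathbf{1}_\V)$---precisely the data of a lax symmetric monoidal structure. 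The higher coherences on both sides are governed by the commutative $\infty$-operad and match termwise; formally, one exhibits both $\infty$-categories as full subcategories of $\Fun_{N(\Fin_*)}(\V^\otimes, \W^\otimes)$ cut out by the same lifting conditions.

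The hard part will be constructing $\Fun(\V,\W)^\otimes$ coherently as an $\infty$-operad and verifying that the resulting total category is in fact coCartesian (hence symmetric monoidal) over $N(\Fin_*)$ rather than merely locally coCartesian. In the $\infty$-categorical setting this requires a careful application of the operadic Kan extension machinery of HA~3.1, and the distributivity hypothesis is used in an essential (non-formal) way precisely at this step: it is what promotes the fiberwise left Kan extensions to globally coherent coCartesian lifts. Once this is done, the identification of algebras with lax functors is formal and reduces to inspecting mapping spaces, following Day's original 1-categorical argument.
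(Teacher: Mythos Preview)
Your outline is a valid route to the result, but it differs from what the paper does. The paper does not give a self-contained argument; it simply locates each claim in Glasman's paper and adds one observation. Specifically: the symmetric monoidal structure is \cite[2.8]{Glasman:DCIC}, defined as a full subobject of the internal mapping object $\overline{\Fun(\V,\W)^\otimes}$ in simplicial sets over $N(\Fin_*)$; the proof that this subobject is actually a coCartesian fibration (rather than merely locally coCartesian) is \cite[2.10]{Glasman:DCIC}; the tensor product formula is read off from the description of locally coCartesian edges in \cite[2.4]{Glasman:DCIC}; and the identification of commutative algebras with lax monoidal functors is \cite[2.12]{Glasman:DCIC}. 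The paper's only addition is the remark that Glasman carries out \cite[2.10]{Glasman:DCIC} assuming $\otimes_\W$ distributes over \emph{all} colimits, whereas inspecting the argument shows that only the colimits indexed by the categories $\otimes_{/v}$ ever appear---which is exactly the weakened hypothesis (2).

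Your approach via operadic left Kan extension is closer to Lurie's treatment than to Glasman's internal-hom construction; both are legitimate. Glasman's route has the advantage that $\Fun(\V,\W)^\otimes$ is built explicitly as a simplicial set sitting inside $\Fun_{N(\Fin_*)}(\V^\otimes,\W^\otimes)$, which makes the algebras-versus-lax-functors comparison essentially tautological (as you yourself note at the end). One small imprecision in your sketch: operadic left Kan extension does not \emph{construct} the $\infty$-operad $\Fun(\V,\W)^\otimes$; that object has to be produced first by some independent means (Glasman's internal hom, or a norm-type construction), and the Kan extension machinery then supplies the coCartesian pushforwards once the operad exists.
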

\begin{proof}
We explain how each of these claims is either explicit or implicit in \cite{Glasman:DCIC}. For the duration of this explanation, we make free use of terminology and notation pertaining to the theory of symmetric monoidal $\infty$-categories developed in \cite[2]{Lurie:HA}. 

The symmetric monoidal structure in question, whose definition appears as \cite[2.8]{Glasman:DCIC}, is exhibited there as a subobject of a larger locally coCartesian fibration $\overline{\Fun(\V,\W)^\otimes}$, which is simply the internal mapping object of maps between $\V^\otimes$ and $\W^\otimes$ in the category of simplicial sets over the nerve of the category of pointed finite sets. The heart of the proof that $\Fun(\V,\W)$ does in fact inherit a symmetric monoidal structure from this larger object is \cite[2.10]{Glasman:DCIC}. This argument is carried out under the assumption that the tensor product of $\W$ distributes over \emph{all} colimits, but an examination of the proof reveals that the only types of colimits that appear are of the form covered by our assumption (2). Thus, the argument given there applies without change. 

Finally, the formula for the tensor product follows from the description of the locally coCartesian edges in $\overline{\Fun(\V,\W)^\otimes}$ given in \cite[2.4]{Glasman:DCIC}, and the equivalence between lax monoidal functors and commutative algebras is \cite[2.12]{Glasman:DCIC}.
\end{proof}

Under this equivalence, the lax structure maps $F(v_1)\otimes F(v_2)\to F(v_1\otimes v_2)$ of a lax monoidal functor $F$ are identified with the components of the commutative multiplication on $F$. In particular, we obtain an identification of $\Fun^\otimes(\V,\W)$ with the full subcategory of commutative algebra objects having the property that these components are equivalences in $\W$. 

The monoidal structure of Theorem \ref{thm:glasman} is traditionally called simply Day convolution. For reasons that will become apparent presently, we prefer the name \emph{left} Day convolution, a choice which is justified by the observation that the formula for the convolution of functors $F$ and $G$ is nothing other than the left Kan extension in the diagram

\[\xymatrix{\V\times\V\ar[d]_-{\otimes_\V}\ar[r]^-{F\times G}& \W\times\W\ar[r]^-{\otimes_\W}&\W\\
\V
}\]

We wish to contemplate taking the \emph{right} Kan extension in the same diagram. Unwinding the dualities, we arrive at the following definition:

\begin{definition}
Let $\V$ and $\W$ be symmetric monoidal $\infty$-categories. \emph{Right Day convolution}, if it exists, is the symmetric monoidal structure opposite to left Day convolution on $\Fun(\V^{op}, \W^{op})$. 
\end{definition}

In the presence of right Day convolution on $\Fun(\V,\W)\simeq\Fun(\V^{op},\W^{op})^{op}$, we obtain the identification \[\Fun^{\oplax}(\V,\W)\simeq \Fun^\lax(\V^{op},\W^{op})^{op}\simeq \Alg_\Com(\Fun(\V^{op},\W^{op}))^{op}\simeq \Coalg_\Com(\Fun(\V,\W)).\] In particular, $\Fun^\otimes(\V,\W)$ is identified with the full subcategory of cocommutative coalgebra objects for which the relevant components of the comultiplication are equivalences in $\W$. 

It should be emphasized that, while the conditions guaranteeing the existence of left Day convolution are fairly innocuous---in many cases one is interested in a target whose tensor product distributes over \emph{all} colimits---the dual conditions are quite restrictive. Nevertheless, we have the following existence criterion, which, although not maximally general, will suffice for our purposes:

\begin{corollary}\label{cor:right day}
Let $\V$ and $\W$ be symmetric monoidal $\infty$-categories, and assume that \begin{enumerate}
\item $\W$ is stable and presentably symmetric monoidal, and
\item for each $v\in \V$, the undercategory $\otimes_{v/}$ receives an initial functor from a finite $\infty$-category.
\end{enumerate}
Then right Day convolution exists on $\Fun(\V,\W)$.
\end{corollary}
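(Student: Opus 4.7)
The plan is to reduce directly to Theorem \ref{thm:glasman} applied to the pair $(\V^\op, \W^\op)$, since by definition right Day convolution on $\Fun(\V,\W)$ is the symmetric monoidal structure opposite to left Day convolution on $\Fun(\V^\op,\W^\op)$. Two hypotheses must be verified for $(\V^\op,\W^\op)$: that $\W^\op$ admits colimits, and that its tensor product distributes over colimits indexed by the $\infty$-categories $\otimes_{/v}$ for $v\in\V^\op$.

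The first hypothesis will be essentially free: since $\W$ is presentably symmetric monoidal it is in particular presentable, hence complete, so $\W^\op$ is cocomplete.

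For the second, I will first bookkeep the opposites. The overcategory $\otimes_{/v}$ computed in $\V^\op$ is canonically identified with $(\otimes_{v/})^\op$, where $\otimes_{v/}$ is the undercategory computed in $\V$; this is because a map $a\otimes b\to v$ in $\V^\op$ is the same as a map $v\to a\otimes b$ in $\V$. By hypothesis (2), $\otimes_{v/}$ receives an initial functor from a finite $\infty$-category $K$, so its opposite receives a final functor from the finite $\infty$-category $K^\op$. Consequently, any colimit in $\W^\op$ indexed by $(\otimes_{v/})^\op$ can be computed as a colimit indexed by $K^\op$, and it therefore suffices to show that the tensor product of $\W^\op$ distributes over finite colimits. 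Dualizing, this is the statement that the tensor product of $\W$ preserves finite limits separately in each variable.

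At this point stability enters the picture, and I expect this to be the only real point of substance: in any stable $\infty$-category, finite limits coincide with finite colimits. Since $\W$ is presentably symmetric monoidal, its tensor product preserves arbitrary colimits in each variable, in particular finite colimits; by stability these agree with finite limits, so the tensor product preserves finite limits in each variable. This verifies the second hypothesis of Theorem \ref{thm:glasman} and completes the argument. The only step that requires care is matching the opposites so that the finiteness of the indexing diagrams can be combined with stability to convert preservation of colimits into preservation of limits; everything else is formal.
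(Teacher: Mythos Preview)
Your proposal is correct and follows essentially the same approach as the paper: reduce to Theorem \ref{thm:glasman} for $(\V^{\op},\W^{\op})$, and use stability together with the assumption that $\otimes_\W$ preserves colimits to conclude that it preserves finite limits, whence $\otimes_{\W^{\op}}$ preserves the relevant finite colimits. The paper's proof is simply a compressed version of yours, omitting the bookkeeping of opposites that you carry out explicitly.
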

\begin{proof}
Since $\W$ is stable and the tensor product of $\W$ distributes over colimits, and in particular finite colimits, it also distributes over finite limits. Applying (2), the result follows from Theorem \ref{thm:glasman}.
\end{proof}

We shall denote the tensor product of right Day convolution by $\otimes^\mathrm{RD}$.

We close this section by applying the ideas introduced in the previous section to the situation in which the domain symmetric monoidal $\infty$-category is $\D_G$. To this end, we consider the undercategory $(\amalg_J)_{U_I/}$, and we note that a map $f:I\to J$ of finite sets determines an object of this undercategory, namely the canonical isomorphism $\varphi_f:U_I\cong \amalg_J U_{f^{-1}(j)}$. We obtain in this way a functor $\varphi_{(-)}$ from the set $\Fin(I,J)$, viewed as a discrete category, and we have the following easy result about this functor:

\begin{lemma}\label{finite}
For any $U_I\in\D_G$, the functor $\varphi_{(-)}:\Fin(I,J)\to (\amalg_J)_{U_I/}$ is initial.
\end{lemma}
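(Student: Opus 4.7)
The plan is to verify initiality by computing, for each object $X = ((V_j)_{j \in J}, \psi : U_I \to \amalg_J V_j)$ of $(\amalg_J)_{U_I/}$, the comma category
\[
\mathcal{E}_X := \Fin(I,J) \times_{(\amalg_J)_{U_I/}} \big((\amalg_J)_{U_I/}\big)_{/X}
\]
and showing that it is a singleton (in particular weakly contractible). An object of $\mathcal{E}_X$ is a pair $(f, \beta)$ with $f \in \Fin(I,J)$ and $\beta : \varphi_f \to X$ a morphism in $(\amalg_J)_{U_I/}$; unpacking, $\beta$ is a tuple $(g_j : U_{f^{-1}(j)} \to V_j)_{j \in J}$ of morphisms in $\D_G$ satisfying $(\amalg_j g_j) \circ \varphi_f = \psi$.

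First I would attach to $\psi$ a canonical function $f_\psi : I \to J$: since each $U_i \cong \mathbb{R}^n$ is connected and $\psi$ is an embedding, $\psi(U_i)$ lies in a single connected component of $\amalg_J V_j$; this component is contained in a unique $V_j$, and I set $f_\psi(i) := j$. Restricting the equation $(\amalg_j g_j) \circ \varphi_f = \psi$ to each $U_i$ forces $\psi(U_i) \subseteq V_{f(i)}$, so by construction of $f_\psi$ a factorization is possible only when $f = f_\psi$. For that unique $f$, each $g_j$ is then uniquely determined component-by-component by $g_j|_{U_i} := \psi|_{U_i}$ for $i \in f^{-1}(j)$; it is a $G$-framed embedding as a restriction of one, and it surjects on $\pi_0(V_j)$ because $\psi$ surjects on $\pi_0$ of $\amalg_J V_j$ and the components of $V_j$ can only be hit by those $U_i$ with $i \in f_\psi^{-1}(j)$.

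Consequently $\mathcal{E}_X$ has exactly one object; since $\Fin(I,J)$ is a discrete category, its only morphisms are identities, so $\mathcal{E}_X$ is a singleton and the functor is initial. The only step that is not pure bookkeeping is the verification that each $g_j$ surjects on $\pi_0$, which I expect to be the main (and quite minor) obstacle; it reduces immediately to the $\pi_0$-surjectivity built into the definition of morphisms in $\D_G$ applied to $\psi$.
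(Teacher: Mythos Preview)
Your argument is correct and is precisely the routine verification the paper has in mind; the paper omits the proof entirely, calling the result easy, and your computation that each comma category $\mathcal{E}_X$ is a singleton is the standard (and only reasonable) way to establish it.
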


Combining this calculation with Corollary \ref{cor:right day}, we obtain Proposition \ref{prop:disjoint day} as an immediate corollary.

\subsection{Overlapping disks}\label{sec:overlapping disks} Our construction of the overlapping monoidal structure will be premised on the relationship between $\Ran_G$ and a second combinatorial structure emerging from collections of disks. In this section we introduce the requisite preliminaries, and we construct the overlapping monoidal structure in the next.

\begin{definition} Let $I$, $J$ and $K$ be finite sets. 
\begin{enumerate}
\item A $J$-\emph{cover} of $I$ is a $J$-indexed collection $S=S_{J}$ of subsets of $I$ such that $\bigcup_{J} S_j=I$.
\item If $S$ is a $J$-cover of $I$ and $T$ is a $K$-cover of $J$, the \emph{composite} $T\circ S$ is the $K$-cover of $I$ defined by $(T\circ S)_k=\cup_{j\in T_k}S_j$.
\item Let $S$ be a $J$-cover of $I$ and $T$ an $L$-cover of $K$. The \emph{disjoint union} of $S$ and $T$ is the $J\amalg L$ cover of $I\amalg K$ given by $$(S\amalg T)_r=\begin{cases}
S_r&\quad r\in J\\
T_r&\quad r\in L.
\end{cases}$$
\end{enumerate}
\end{definition}

Covers form a category $\Cov$ with objects finite sets, morphisms from $I$ to $J$ the set $\Cov(I,J)$ of $J$-covers of $I$, and composition defined by composition of covers. We view $\Cov$ as a symmetric monoidal category under disjoint union of sets and covers. A function $f:I\to J$ determines a $J$-cover $S(f)$ of $I$ given by $S(f)_j=f^{-1}(j)$, and this assignment extends to a symmetric monoidal functor $\Fin\to \Cov$.

A cover may be pictured graphically as a system of lines drawn between elements of $I$ and elements of $J$, where a line connects $i$ and $j$ if and only if $i\in S_j$. Such a system of lines determines a cover precisely when every element of $i$ is connected to some element of $j$, and the cover is a function precisely when each $i$ is connected to exactly one $j$. At the other extreme from functions are what one might think of as ``splittings,'' where each $j$ is connected to exactly one $i$. As the following example shows, the addition of these splitting covers is the essential difference between $\Fin$ and $\Cov$.

\begin{example}\label{canonical cover}
A $J$-cover $S$ of $I$ determines a canonical cover $\widetilde S:I\to \amalg_J S_j$ by $(\widetilde S)_i=\{i\}$, and $S$ factors uniquely as $\widetilde S$ followed by the obvious function $\amalg_J S_j\to J$.
\end{example}

\begin{definition}\label{overlapping disks}
The \emph{category of overlapping ($G$-framed) disks} is the category $\widetilde\D_G$ specified by the following data:
\begin{enumerate}
\item the objects of $\widetilde\D_G$ are the objects of $\D_G$;
\item a morphism from $U_I$ to $V_J$ in $\widetilde\D_G$ is a $J$-cover $S$ of $I$ and an element of $\prod_J\Hom_{\D_G}(U_{S_j}, V_j)$;
\item composition is given by composition of covers and composition in $\D_G$.
\end{enumerate}
\end{definition}

\begin{example}\label{canonical morphism}
Given an object $U_I$ and a $J$-cover of $I$, there is a canonical morphism $\varphi_S:U_I\to \amalg_{J}U_{S_j}$ lying over the cover $\widetilde S$ of Example \ref{canonical cover}, the components of which are the identites of the various $U_i$. The presence of these morphisms, which allow disks to ``split apart,'' is the essential difference between $\D_G$ and $\widetilde\D_G$.
\end{example}

The category of overlapping disks is symmetric monoidal under disjoint union and equipped with a symmetric monoidal functor to $\Fin$.

We now imitate our earlier our approach to $\D_G$; we have a functor $\Cov(I,J)\to (\amalg_J)_{U_I/}$, defined in exactly the same way, and, as before, this functor is initial. Since $\Cov(I,J)$ is finite, Corollary \ref{cor:right day} implies that right Day convolution exists on $\Fun(\widetilde{\D}_G,\C)$ and that the tensor product in this monoidal structure is given by the formula \[\bigg(\bigotimes_J^\mathrm{RD}F_j\bigg)(U_I)\simeq \bigoplus_{\Cov(I,J)} \bigotimes_J F_j(U_{S_j}).\]

\subsection{Overlapping tensor product}\label{section:overlapping} We now begin the task of transferring this monoidal structure to the $\infty$-category of constructible cosheaves on the $\Ran_G$. This will require a few additional notions.

\begin{definition}
Let $M$ be a $G$-framed manifold. We say that an $I$-tuple $\{U_i\subseteq M\}_{i\in I}$ of objects of $\Diag_G$ is \emph{disjoint} if $U_i\cap U_j=\varnothing$ for $i\neq j\in I$. 
\end{definition}

Note that, if $\{U_i\subseteq M\}_{i\in I}$ is disjoint, then $U_I\subseteq M$ is an object of $\Ran_G$; thus, we may locate $\Ran_G$ within a larger auxiliary object, which we now define.

\begin{definition}
We define a category $\widetilde\Ran_G$ as follows:
\begin{enumerate}
\item an object of $\widetilde\Ran_G$ is a $G$-framed manifold $M$ and an $I$-tuple $\{U_i\subseteq M\}_{i\in I}$ of objects of $\Diag_G$;
\item a morphism from $\{U_i\subseteq M\}_{i\in I}$ to $\{V_j\subseteq N\}_{j\in J}$ is a $J$-cover $S$ of $I$ such that $\{U_i\subseteq M\}_{i\in S_j}$ is disjoint for each $j\in J$, together with an element of $\prod_J\Hom_{\Ran_G}(U_{S_j}\subseteq M, V_j\subseteq N)$;
\item composition is given by composition of covers and composition in $\Ran_G$. 
\end{enumerate}
\end{definition}

Our constructions fit together into the following commuting diagram: \[\xymatrix{\Ran_G\ar[r]^-\iota\ar[d]_-\epsilon&\widetilde\Ran_G\ar[d]^-{\tilde\epsilon}\\
\D_G\ar[d]\ar[r]&\widetilde\D_G\ar[d]\\
\Fin\ar[r]&\Cov.
}\]

We shall say that a functor from $\widetilde\D_G$ is locally constant if its restriction to $\D_G$ is so, and we shall say that a functor from $\widetilde\Ran_G$ is a constructible cosheaf if its restriction to $\Ran_G$ is so. The argument of Lemma \ref{lem:cosheaf identification} yields the following relationship between these two types of functors:

\begin{lemma}\label{lem:cosheaf identification 2}
Restriction along $\epsilon:\widetilde\Ran_G\to\widetilde\D_G$ induces an equivalence \[\xymatrix{
\Fun^\loc(\widetilde\D_G, \C)\ar[r]^-\sim&\cShv^\cbl(\widetilde\Ran_G,\C).
}\]
\end{lemma}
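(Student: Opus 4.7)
The plan is to follow the proof of Lemma~\ref{lem:cosheaf identification} \emph{mutatis mutandis}. Concretely, the adjunction $\tilde\epsilon^\natural\dashv\tilde\epsilon_*$ between restriction along $\tilde\epsilon$ and right Kan extension must be shown to restrict to an adjoint equivalence between the subcategories $\Fun^\loc(\widetilde\D_G,\C)$ and $\cShv^\cbl(\widetilde\Ran_G,\C)$. This amounts to four claims analogous to (1)--(4) of Lemma~\ref{lem:cosheaf identification}: that $\tilde\epsilon^\natural$ sends locally constant functors to constructible cosheaves, that $\tilde\epsilon_*$ sends constructible cosheaves to locally constant functors, and that the unit and counit of the adjunction are equivalences on the respective subcategories.

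The first claim is immediate: because local constancy on $\widetilde\D_G$ and constructibility on $\widetilde\Ran_G$ are each defined by restriction to the smaller categories $\D_G$ and $\Ran_G$, the commuting square $\tilde\epsilon\circ\iota=(\mathrm{incl.})\circ\epsilon$ reduces this assertion to the corresponding statement in Lemma~\ref{lem:cosheaf identification}. The remaining three claims rest on the following analogue of Lemma~\ref{lem:initial object}: for every object $W_K$ of $\widetilde\D_G$, the pair $(\{W_i\subseteq W_K\}_K,\id_{W_K})$ is initial in the comma category $W_K\downarrow\tilde\epsilon$. Given another object $(\{V_j\subseteq N\}_J,\alpha)$, with $\alpha$ consisting of a $J$-cover $S$ of $K$ and disk embeddings $f_j:W_{S_j}\to V_j$, the lift to $\widetilde\Ran_G$ uses the same cover $S$ (the disjointness hypothesis being automatic because the $W_i$ are distinct components of $W_K$) and, for each $j$, the $G$-framed embedding of $W_K$ into $N$ pinned down by the restriction of its codomain to its image, exactly as in the rigidity argument of Lemma~\ref{lem:initial object}. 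This yields the formula $\tilde\epsilon_*F(W_K)\simeq F(\{W_i\subseteq W_K\}_K)$.

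From this formula, the second claim follows from constructibility of $F_2$, the third from direct computation of the unit at each object, and the fourth from the observation that the counit at $\{X_l\subseteq M\}_L$ is $F_2$ applied to the canonical morphism $\{X_l\subseteq X_L\}_L\to\{X_l\subseteq M\}_L$ given by the identity cover and the inclusion of ambient manifolds, which is the image under $\iota$ of an isotopy equivalence in $\Ran_G$ and hence sent to an equivalence by $F_2$. The main obstacle is the rigidity step in the initial object claim, which demands careful bookkeeping of the additional data carried by a morphism in $\widetilde\Ran_G$ as compared with one in $\Ran_G$---namely the cover $S$ and the tuple of $G$-framed embeddings; once this rigidity is in hand, the rest of the argument is essentially formal.
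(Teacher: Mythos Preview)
Your proposal is correct and takes exactly the approach the paper indicates: the paper's proof consists only of the assertion that ``the argument of Lemma~\ref{lem:cosheaf identification} yields'' the result, and your elaboration of the four claims together with the analogue of Lemma~\ref{lem:initial object} is a faithful unpacking of that assertion.
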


Since it is clear from the formula for right Day convolution on $\Fun(\widetilde\D_G,\C)$ that the tensor product of locally constant functors is again locally constant, we obtain in this way a symmetric monoidal structure on $\cShv^\cbl(\widetilde\Ran_G,\C)$. Our next result asserts that the $\infty$-category of constructible cosheaves on $\Ran_G$ is a localization of this larger $\infty$-category.

\begin{definition}
We say that a constructible cosheaf $F:\widetilde \Ran_G\to \C$ has \emph{disjoint support} if $F(\{U_i\subseteq M\}_{i\in I})\simeq0$ whenever $\{U_i\subseteq M\}_{i\in I}$ is not disjoint.
\end{definition}

\begin{lemma}
The right Kan extension $\iota_*:\cShv^\cbl(\Ran_G,\C)\to \Fun(\widetilde\Ran_G,\C)$ is fully faithful with essential image the constructible cosheaves with disjoint support. 
\end{lemma}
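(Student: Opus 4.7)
The plan is to analyze the pointwise formula for the right Kan extension and to transport the problem across the equivalence of Lemma \ref{lem:cosheaf identification 2}, reducing it to a computation at the level of the discrete category $\widetilde{\D}_G$ of overlapping disks.

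First, I use the commutative square $\tilde\epsilon\circ\iota=\tilde\jmath\circ\epsilon$. Passing to right adjoints yields $\tilde\epsilon_*\iota_* \simeq \tilde\jmath_*\epsilon_*$, and combining this with the equivalences of Lemma \ref{lem:cosheaf identification} and Lemma \ref{lem:cosheaf identification 2} allows me to identify $\iota_*$ with $\tilde\epsilon^\natural \tilde\jmath_* \epsilon_*$ once I know that $\tilde\jmath_*$ lands in the locally constant functors. Under these equivalences, a constructible cosheaf with disjoint support on $\widetilde\Ran_G$ corresponds to a locally constant functor $G\colon\widetilde{\D}_G\to\C$ satisfying $G(V_J)\simeq 0$ for $|J|\geq 2$: indeed, $\tilde\epsilon$ sends any tuple $\{U_i\subseteq M\}_I$ to the abstract disjoint union $\sqcup_I U_i$, forgetting the ambient, and since every abstract $V_J$ of cardinality $\geq 2$ can be realized as a non-disjoint tuple in some manifold, disjoint support forces vanishing on all such $V_J$. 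Thus the lemma reduces to: $\tilde\jmath_*$ restricted to $\Fun^\loc(\D_G,\C)$ is fully faithful with essential image the locally constant functors vanishing on objects of cardinality at least two.

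Second, I compute $\tilde\jmath_* F'(V_J)$ using the pointwise Kan extension formula over the undercategory $V_J \downarrow \tilde\jmath$. For $|J|\leq 1$ the plan is to show, using the local constancy of $F'$, that the tautological object $(V_J,\mathrm{id})$ is \emph{effectively} initial -- every other object admits an essentially unique morphism from it after identifying isotopic $\D_G$-morphisms -- so that $\tilde\jmath_* F'(V_J)\simeq F'(V_J)$. This immediately gives $\tilde\jmath^\natural\tilde\jmath_* F'\simeq F'$ and hence the fully faithfulness of $\iota_*$.

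For $|J|\geq 2$ the claim $\tilde\jmath_* F'(V_J)\simeq 0$ is the main obstacle. My plan is to exploit the canonical splitting morphism $\varphi_S\colon V_J\to \amalg_J V_j$ of Example \ref{canonical morphism} (with $S$ the trivial cover $\widetilde S$ whose components are singletons). This splitting, which has \emph{no} counterpart in $\D_G$, provides the undercategory with a structural retraction onto a diagram whose limit in the stable presentable symmetric monoidal category $\C$ can be identified with the base point; more concretely, I expect to organize this retraction as an initial subcategory of $V_J\downarrow\tilde\jmath$ with empty or contractible-but-null-indexing, yielding the vanishing.

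Finally, with the pointwise computation in hand the essential image is identified formally: if $G$ is a constructible cosheaf on $\widetilde\Ran_G$ with disjoint support, the unit $G\to \iota_*\iota^\natural G$ is an equivalence on disjoint tuples by the fully faithfulness just proved, and on non-disjoint tuples both sides vanish by the support hypothesis and by the computation of $\iota_*$. Thus $G$ lies in the essential image, completing the proof.
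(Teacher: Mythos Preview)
Your reduction to a statement about $\tilde\jmath:\D_G\to\widetilde\D_G$ is self-contradictory, and this reveals a genuine error in the approach. You claim that $\tilde\jmath_*$ restricted to $\Fun^\loc(\D_G,\C)$ is fully faithful with essential image the locally constant functors vanishing on objects of cardinality $\geq 2$. But $\tilde\jmath$ is the identity on objects, so fully faithfulness gives $F'(V_J)\simeq\tilde\jmath^\natural\tilde\jmath_*F'(V_J)\simeq\tilde\jmath_*F'(V_J)$, and your vanishing claim would then force $F'(V_J)\simeq 0$ for every $|J|\geq 2$ and every locally constant $F'$. That is obviously false.

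The underlying mistake is the assumption that $\iota_*F$ lies in the essential image of $\tilde\epsilon^\natural$, which is what would be needed to invoke Lemma~\ref{lem:cosheaf identification 2} and replace $\iota_*$ by $\tilde\epsilon^\natural\tilde\jmath_*\epsilon_*$. It does not: the paper's direct computation shows that $\iota_*F(\{U_i\subseteq M\}_I)$ equals $F(U_I\subseteq M)$ on disjoint tuples and $0$ on non-disjoint ones, so $\iota_*F$ distinguishes between disjoint and non-disjoint realizations of the \emph{same} abstract disks $\amalg_I U_i$. No functor of the form $\tilde\epsilon^\natural G$ can do this, since $\tilde\epsilon$ forgets the ambient manifold entirely. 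Your ``vanishing via the splitting morphism $\varphi_S$'' is not only left unproven; it is attempting to establish something false.

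The paper's proof avoids all of this by computing the undercategory $\{U_i\subseteq M\}_{I}\,/\,\iota$ directly. The key geometric point---which your reduction to $\widetilde\D_G$ discards---is that a morphism in $\widetilde\Ran_G$ carries a \emph{single} embedding $\varphi:M\to N$, and an embedding preserves nonempty intersections. Thus if $U_{i_0}\cap U_{i_1}\neq\varnothing$ then $\varphi(U_{i_0})$ and $\varphi(U_{i_1})$ meet, so they cannot land in distinct components of a disjoint target; and they cannot land in the same $S_j$ either, by the disjointness condition in the definition of a morphism. Hence the undercategory is empty, the right Kan extension is the terminal object $0$, and on disjoint tuples the undercategory is simply the slice in $\Ran_G$. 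The ambient manifold is doing all the work.
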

\begin{proof}
From the definitions, \[(\{U_i\subseteq M\}_{i\in I})_{/\iota}=\begin{cases}
(\{U_i\subseteq M\}_{i\in I})_{/\Ran_G}&\quad \{U_i\subseteq M\}_{i\in I} \text{ is disjoint}\\
\varnothing&\quad \text{else.}
\end{cases}\]
\end{proof}

Next, we show that this localization is compatible with the monoidal structure inherited from right Day convolution on $\widetilde \D_G$.

\begin{lemma}
Let $\varphi:F_1\to F_2$ be a morphism and $F$ an object in $\cShv^\cbl(\widetilde\Ran_G,\C)$. If $\iota^\natural \varphi$ is an equivalence, then so is $\iota^\natural(\varphi\otimes \id_F)$.
\end{lemma}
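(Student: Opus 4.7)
The plan is to reduce everything to the explicit formula for right Day convolution and verify the statement componentwise. First I would record that, by Lemma \ref{lem:cosheaf identification 2} and the initial functor $\Cov(I,\{1,2\})\to (\amalg_{\{1,2\}})_{U_I/}$ established in Section \ref{sec:overlapping disks}, the tensor product on $\cShv^\cbl(\widetilde\Ran_G,\C)$ is computed by the formula
\[(F_1\otimes F)(\{U_i\subseteq M\}_{i\in I})\simeq \bigoplus_{(S_1,S_2)\in\Cov(I,\{1,2\})} F_1(\{U_i\subseteq M\}_{i\in S_1})\otimes F(\{U_i\subseteq M\}_{i\in S_2}),\]
naturally in the arguments. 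Here the decomposition into a direct sum uses that $\C$ is presentably symmetric monoidal, so the tensor product distributes over colimits in each variable.

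Next I would evaluate $\iota^\natural(\varphi\otimes \id_F)$ at an arbitrary object $U_I\subseteq M$ of $\Ran_G$. The tuple $\{U_i\subseteq M\}_{i\in I}$ is disjoint by assumption, and for any $\{1,2\}$-cover $(S_1,S_2)$ of $I$ the sub-tuples $\{U_i\subseteq M\}_{i\in S_k}$ are again disjoint, hence represent objects $U_{S_k}\subseteq M$ of $\Ran_G$. The formula above therefore rewrites as
\[\iota^\natural(F_1\otimes F)(U_I\subseteq M)\simeq \bigoplus_{(S_1,S_2)\in\Cov(I,\{1,2\})} \iota^\natural F_1(U_{S_1}\subseteq M)\otimes \iota^\natural F(U_{S_2}\subseteq M),\]
with a similar expression for $F_2$ in place of $F_1$. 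By naturality, under these identifications $\iota^\natural(\varphi\otimes \id_F)(U_I\subseteq M)$ is the direct sum over $(S_1,S_2)$ of the maps $\iota^\natural\varphi(U_{S_1}\subseteq M)\otimes \id_{\iota^\natural F(U_{S_2}\subseteq M)}$.

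Finally, by hypothesis each $\iota^\natural\varphi(U_{S_1}\subseteq M)$ is an equivalence in $\C$, so each summand is an equivalence, and therefore so is the direct sum. Since this holds for every object of $\Ran_G$, we conclude that $\iota^\natural(\varphi\otimes \id_F)$ is an equivalence, as desired. The argument is essentially a bookkeeping exercise, so I do not anticipate any real obstacle; the only mildly delicate point is checking that the explicit formula for the tensor product transfers along the equivalence of Lemma \ref{lem:cosheaf identification 2}, which follows because restriction along the fully faithful $\iota$ commutes with direct sums and the cover decomposition of the undercategory is unchanged.
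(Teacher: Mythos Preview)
Your proposal is correct and follows essentially the same approach as the paper: both arguments unwind the explicit direct-sum formula for the tensor product over $\Cov(I,\{1,2\})$ (equivalently, $I_1\cup I_2=I$), observe that sub-tuples of a disjoint tuple remain disjoint so that restriction along $\iota$ may be applied termwise, and conclude by checking each summand is an equivalence. The paper's version is simply terser, packaging the same computation into a single commuting square.
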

\begin{proof}
Evidently, if $\{U_i\subseteq M\}_{i\in I}$ is disjoint, then any subset is again disjoint. Thus, we have a commuting diagram $$\xymatrix{
\iota^\natural (F_1\otimes F)(U_I\subseteq M)\ar[d]_-{\iota^\natural (\varphi\otimes F)(U_I\subseteq M)}\ar[r]^-\sim&\displaystyle\bigoplus_{I_1\cup I_2=I}\iota^\natural F_1(U_{I_1}\subseteq M)\otimes \iota^\natural F(U_{I_2}\subseteq M)\ar[d]^-{\bigoplus \iota^\natural\varphi(U_{I_1}\subseteq M)\otimes \iota^\natural\id_{F(U_{I_2}\subseteq M)}}\\
\iota^\natural(F_2\otimes F)(U_I\subseteq M)\ar[r]^-\sim&\displaystyle\bigoplus_{I_1\cup I_2=I}\iota^\natural F_2(U_{I_1}\subseteq M)\otimes \iota^\natural F(U_{I_2}
\subseteq M).
}$$ The righthand vertical arrow is an equivalence by assumption, so the lefthand vertical arrow is also an equivalence by two-out-of-three.
\end{proof}

By \cite[2.2.1.9]{Lurie:HA}, we obtain a second symmetric monoidal structure on $\cShv^\cbl(\Ran_G,\C)$ for which the restriction $\iota^\natural$ is symmetric monoidal. We refer to this monoidal structure as the \emph{overlapping monoidal structure}. Where necessary, the symbol $\cup$ will be used to disambiguate the overlapping tensor product from other tensor products (e.g. $\otimes^\cup$).

The lax monoidal structure on the identity asserted in Proposition \ref{prop:overlapping exists} is obtained after localization from the oplax structure on the restriction $\Fun(\widetilde \D_G,\C)\to \Fun(\D_G,\C)$ arising from the fact that $\D_G\to \widetilde \D_G$ is symmetric monoidal. 

\begin{remark}
In terms of explicit formulas, the components of the lax monoidal structure are given by the projections \[\bigotimes_J^\cup F_j(U_I)\simeq \bigoplus_{\Cov(I,J)}\bigotimes_JF_j(U_{S_j})\to \bigoplus_{\Fin(I,J)}\bigotimes_JF_j(U_{f^{-1}(j)})\simeq\bigotimes_J^\amalg F_j(U_I)\] induced by the inclusion $f\mapsto \{f^{-1}(j)\}_{j\in J}$ of functions into covers. 
\end{remark}

With the following lemma, we complete the proof of Proposition \ref{prop:overlapping exists}:

\begin{lemma}\label{lem:overlapping is pushforward}
There is a natural equivalence $F\otimes^\cup G\simeq \mu_*(F\boxtimes G)$.
\end{lemma}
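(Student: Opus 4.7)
The strategy is a direct pointwise computation: I will evaluate both sides at an arbitrary object $U_I\subseteq M$ of $\Ran_G$ and observe that the resulting formulas agree naturally.

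To unwind the left-hand side, I recall that the overlapping monoidal structure was produced by localizing the right Day convolution on $\cShv^\cbl(\widetilde\Ran_G,\C)\simeq \Fun^\loc(\widetilde\D_G,\C)$ along the restriction $\iota^\natural$, which is left adjoint to the fully faithful right Kan extension $\iota_*$. By \cite[2.2.1.9]{Lurie:HA}, the resulting tensor product is computed on lifts as $F\otimes^\cup G\simeq \iota^\natural\bigl(\iota_* F\otimes^{\mathrm{RD}}\iota_* G\bigr)$. Evaluating at $U_I\subseteq M$, whose image under $\iota$ is the (automatically disjoint) tuple $\{U_i\subseteq M\}_{i\in I}$, and invoking the Day convolution formula derived in \S\ref{sec:overlapping disks} transported via Lemma \ref{lem:cosheaf identification 2}, I obtain
\[
(F\otimes^\cup G)(U_I\subseteq M)\simeq \bigoplus_{(S_1,S_2)\in\Cov(I,\{1,2\})}(\iota_*F)(\{U_i\}_{i\in S_1}\subseteq M)\otimes (\iota_*G)(\{U_i\}_{i\in S_2}\subseteq M).
\]
Since any sub-tuple of a disjoint tuple is again disjoint, each tuple $\{U_i\}_{i\in S_j}\subseteq M$ lies in the essential image of $\iota$, so the counit $\iota^\natural\iota_*\simeq\id$ reduces each factor to $F(U_{S_j}\subseteq M)$ and $G(U_{S_j}\subseteq M)$ respectively.

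To match with the right-hand side, I apply Lemma \ref{lem:multiplication push formula}, which yields
\[
\mu_*(F\boxtimes G)(U_I\subseteq M)\simeq \bigoplus_{I_1\cup I_2=I}F(U_{I_1}\subseteq M)\otimes G(U_{I_2}\subseteq M).
\]
Under the tautological bijection between $\{1,2\}$-covers $(S_1,S_2)$ of $I$ and ordered pairs $(I_1,I_2)$ with $I_1\cup I_2=I$, this identifies term-by-term with the expression above. Naturality in $F$, $G$, and morphisms of $\Ran_G$ is inherited from the functoriality of right Day convolution, the description of $\iota_*$ as a right Kan extension, and the naturality assertion built into Lemma \ref{lem:multiplication push formula}.

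The main obstacle is essentially bookkeeping: confirming that the localization formula $F\otimes^\cup G\simeq \iota^\natural(\iota_*F\otimes^{\mathrm{RD}}\iota_*G)$ is functorial (immediate from \cite[2.2.1.9]{Lurie:HA} and the compatibility verified just before Proposition \ref{prop:overlapping exists}), and that the value of $\iota_*F$ on a disjoint tuple is genuinely computed by the counit equivalence (which is exactly the content of fully faithfulness of $\iota_*$ established in the section). No deeper difficulty arises, and the equivalence of the two pushforwards $\mu_!\simeq \mu_*$ noted in Lemma \ref{lem:multiplication push formula} makes the present statement and Proposition \ref{prop:overlapping exists} consistent.
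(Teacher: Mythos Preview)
Your proof is correct and reaches the same pointwise identification as the paper, namely that both sides are indexed by $\Cov(I,\{1,2\})\cong\{(I_1,I_2):I_1\cup I_2=I\}$. The route, however, is somewhat different. The paper does not pass through the localization formula $F\otimes^\cup G\simeq \iota^\natural(\iota_*F\otimes^{\mathrm{RD}}\iota_*G)$; instead it writes down a (non-commuting) square relating $\mu$, $\iota$, $\tilde\epsilon$, and $\amalg$, observes that there is nonetheless a canonical natural transformation $\tilde\epsilon\iota\mu\to \amalg(\tilde\epsilon\times\tilde\epsilon)(\iota\times\iota)f$ between the two composites, and shows that the induced map on right Kan extensions is an equivalence by identifying the relevant undercategories with $\Cov(-,2)$ via the same initial functor used in \S\ref{sec:overlapping disks} and in Lemma \ref{lem:multiplication push formula}.

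Your approach has the virtue of being more direct: once the localization formula is in hand, the computation is immediate. The paper's approach has the complementary virtue of producing the comparison map \emph{structurally}, as coming from an explicit $2$-cell in a diagram of categories, which makes its compatibility with later constructions (e.g.\ Lemma \ref{lem:global sections}) transparent. The only place your argument is slightly thin is the naturality clause at the end: in an $\infty$-categorical setting, matching formulas pointwise and invoking ``functoriality of the pieces'' is not quite the same as exhibiting a map in the functor category. This is not a real gap---the equivalence you describe is visibly the one induced by the bijection of indexing sets, and all the intermediate identifications are natural---but the paper's formulation via a map of diagrams makes this point more cleanly.
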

\begin{proof}
Consider the \emph{noncommutative} diagram:
\[\xymatrix{
(\Ran_G\times_{\Mfld_G}\Ran_G)_\#
\ar[r]\ar[dd]_-\mu\ar[r]^-f&\Ran_G\times_{\Mfld_G}\Ran_G\ar[r]^-{\iota\times\iota}&\widetilde\Ran_G\times_{\Mfld_G}\widetilde\Ran_G\ar[r]^-{\tilde\epsilon\times\tilde\epsilon}&\widetilde\D_G\times\widetilde\D_G\ar[dd]^-\amalg\\\\
\Ran_G\ar[rr]^{\iota}&&\widetilde\Ran_G\ar[r]^-{\tilde\epsilon}&\widetilde\D_G.
}\] Although the two composites do not agree, there is an evident map $\tilde\epsilon\iota \mu\to\amalg(\tilde\epsilon\times\tilde\epsilon)(\iota\times\iota)f$, which induces a natural transformation \[\mu_*(F\boxtimes G)\to F\otimes^\cup G\] after taking right Kan extensions. To see that this map is an equivalence, we note that, as in Lemma \ref{lem:multiplication push formula} and \S\ref{sec:overlapping disks}, each of the undercategories in question receives an initial functor from $\Cov(-,2)$ (note that $\mu^{-1}(U_I\subseteq M)$ is isomorphic to $\Cov(I, 2)$).
\end{proof}

From Lemmas \ref{lem:global sections} and \ref{lem:overlapping is pushforward}, we see that $\pi_!$ lifts to a symmetric monoidal functor between the overlapping and pointwise monoidal structures on $\Fun(\Mfld_G,\C)$. We record the following fact concerning the interaction of this functor with Lie chains, which is immediate from \cite[6.2.6]{FrancisGaitsgory:CKD}:

\begin{lemma}\label{lem:globalization chains}
The diagram $$\xymatrix{
\Alg_\Lie(\cShv^\cbl(\Ran_G,\C)_\cup)\ar[d]_-{C_\cup^\Lie}\ar[r]^-{\pi_!}&\Alg_\Lie(\Fun^\loc(\Mfld_G,\C)_\pt)\ar[d]^-{C^\Lie_\amalg}\\
\Coalg_{\Com}(\cShv^\cbl(\Ran_G,\C)_\cup)\ar[r]^-{\pi_!}&\Coalg_{\Com}(\cShv^\cbl(\Ran_G,\C)_\pt)}$$ commutes.
\end{lemma}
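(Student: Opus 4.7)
The plan is to deduce the commuting square from a general principle: any colimit-preserving symmetric monoidal functor between nonunital stable presentably symmetric monoidal $\infty$-categories induces a strictly commuting square whose horizontal arrows come from the functor and whose vertical arrows are the Lie chains functor. This principle is the content of \cite[6.2.6]{FrancisGaitsgory:CKD} and expresses the fact that $C^\Lie$, being defined as the monadic bar construction $B(\id,\Lie,-)$, is assembled out of tensor products, symmetric coinvariants, and geometric realizations, all of which are preserved by any colimit-preserving symmetric monoidal functor.

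First, I would verify the two input hypotheses of this principle for the functor $\pi_!$ from $\cShv^\cbl(\Ran_G,\C)_\cup$ to $\Fun^\loc(\Mfld_G,\C)_\pt$. Colimit preservation is automatic, since $\pi_!$ is the left adjoint in the adjunction $(\pi_!,\pi^!)$ constructed in \S\ref{sec:Ran section}. For the strong monoidal structure, I would combine Lemma \ref{lem:overlapping is pushforward} with Lemma \ref{lem:global sections}: the former identifies the overlapping tensor product as $F\otimes^\cup G\simeq \mu_!(F\boxtimes G)$, after which the latter yields
\[
\pi_!(F\otimes^\cup G)\simeq \pi_!\mu_!(F\boxtimes G)\simeq \gamma^\natural(\pi\times\pi)_!(F\boxtimes G)\simeq \gamma^\natural(\pi_!F\boxtimes \pi_!G),
\]
which is precisely the pointwise tensor product of $\pi_!F$ and $\pi_!G$. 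The unit and empty-cover components are handled in the same way, and coherence with the symmetry isomorphisms follows because every equivalence in sight is induced by a functor of $1$-categories.

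Second, with a strong symmetric monoidal lift of $\pi_!$ in hand, I would invoke \cite[6.2.6]{FrancisGaitsgory:CKD} to obtain the diagram of the statement. Note that applying the same principle to the inclusion $\cShv^\cbl_0\hookrightarrow \cShv^\cbl$ together with counitalization and reduction, and restricting to locally constant functors on $\Mfld_G$, identifies the Lie chains appearing on the two sides with the ones used throughout the paper, so there is no mismatch between conventions.

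The main obstacle is the verification of strong monoidality: while the pointwise and overlapping structures are a priori only related by an equivalence of tensor products on representatives, the coherence data must be handled functorially so that the general result of Francis--Gaitsgory applies. Once this bookkeeping is done, the conclusion is essentially a formal consequence of the cited result.
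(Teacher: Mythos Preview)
Your proposal is correct and follows essentially the same approach as the paper: the paper establishes just before the lemma that $\pi_!$ is symmetric monoidal from the overlapping to the pointwise structure by combining Lemmas \ref{lem:global sections} and \ref{lem:overlapping is pushforward}, and then declares the result immediate from \cite[6.2.6]{FrancisGaitsgory:CKD}. You have simply spelled out the verification of the hypotheses (colimit preservation and strong monoidality) in slightly more detail than the paper does.
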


\bibliographystyle{amsalpha}
\bibliography{references}

\end{document}